\newcommand{\N}{\mathbb{N}}
\newcommand{\R}{\mathbb{R}}
\newcommand{\Z}{\mathbb{Z}}
\newtheorem{theorem}{Theorem}[section]
\newtheorem{lemma}[theorem]{Lemma}
\newtheorem{proposition}[theorem]{Proposition}
\newtheorem{remark}[theorem]{Remark}
\newtheorem{assumption}[theorem]{Hypothesis}
\numberwithin{equation}{section}
\author[S. Breteaux]{S{\'e}bastien Breteaux}
\address[S. Breteaux]{Universit{\'e} de Lorraine, CNRS, IECL, F-57000 Metz, France}
\email{sebastien.breteaux@univ-lorraine.fr}
\author[J. Faupin]{J{\'e}r{\'e}my Faupin}
\address[J. Faupin]{Universit{\'e} de Lorraine, CNRS, IECL, F-57000 Metz, France}
\email{jeremy.faupin@univ-lorraine.fr}
\author[V. Grasselli]{Viviana Grasselli}
\address[V. Grasselli]{Universit{\'e} de Lorraine, CNRS, IECL, F-57000 Metz, France}
\email{viviana.grasselli@univ-lorraine.fr}
\theoremstyle{plain}
\theoremstyle{plain}
\theoremstyle{plain}
\theoremstyle{plain}
\theoremstyle{remark}
\providecommand{\lemmaname}{Lemma}
\providecommand{\notationname}{Notation}
\providecommand{\propositionname}{Proposition}
\providecommand{\theoremname}{Theorem}
\begin{document}
 \title[Number of bound states for fractional Schr\"odinger operators]{On the number of bound states \\ for fractional Schr\"odinger operators with critical and super-critical exponent}

\begin{abstract}
We study the number $N_{<0}(H_s)$ of negative eigenvalues, counting multiplicities, of the fractional Schr\"odinger operator $H_s=(-\Delta)^s-V(x)$ on $L^2(\mathbb{R}^d)$, for any $d\ge1$ and $s\ge d/2$. We prove a bound on $N_{<0}(H_s)$ which depends on $s-d/2$ being either an integer or not, the critical case $s=d/2$ requiring a further analysis. Our proof relies on a splitting of the Birman-Schwinger operator associated to this spectral problem into low- and high-energies parts, a projection of the low-energies part onto a suitable subspace, and, in the critical case $s=d/2$, a Cwikel-type estimate in the weak trace ideal $\mathcal{L}^{2,\infty}$ to handle the high-energies part.
\end{abstract}
\maketitle
\tableofcontents
\section{Introduction}

Estimating the number of bound states of the two-body Schr\"odinger operator
\begin{equation*}
H:= -\Delta - V(x) 
\end{equation*}
on $L^2(\mathbb{R}^d)$ constitutes a rich problem that has attracted a large amount of attention in the mathematical literature. Classical textbook references include \cite[Chapter XIII.3]{ReedSimon4}, \cite[Chapter 7]{Simon05}, \cite[Chapter XI]{EdmundEvans18}, \cite[Chapter 4]{lieb_seiringer}, see also \cite{Simon76_2,Hundertmark02} for review articles and \cite[Chapter 4]{FrankLaptevWeidl23} for a more recent exposition.

Roughly speaking, the question raised is as follows. Consider a real-valued measurable function $V:\mathbb{R}^d\to\mathbb{R}$ such that $H$ identifies with a self-adjoint operator on $L^2(\mathbb{R}^d)$, with essential spectrum $[0,\infty)$ (see e.g. \cite{ReedSimon2} or \cite{FrankLaptevWeidl23} for sufficient conditions on $V$ implying these properties, see also Hypothesis \ref{hyp:v-high-frequency} and Remark \ref{rk:embeddings} below for the conditions considered in this paper, in the setting of the fractional Schr\"odinger operator). The bound states of $H$ are defined as the normalized eigenvectors corresponding to negative eigenvalues. One then aims at estimating $N_{<0}(H)$, the number of negative eigenvalues of $H$ counting multiplicities.

Note that, decomposing $V=V_+-V_-$ with $V_\pm\geq0$, we have $H\geq -\Delta -V_+(x)$ in the sense of quadratic forms, which implies that
\[N_{<0}(H)\le N_{<0}\big(-\Delta-V_+(x)\big).\]
Therefore, to obtain a bound on the number of negative eigenvalues of $H$, it suffices to consider the case where $V=V_+\geq 0$. Throughout the paper, to simplify the exposition, we thus assume that
\begin{equation*}
V\geq 0.
\end{equation*}


Among the various bounds obtained in the literature, we mention the following ones. The celebrated Cwikel-Lieb-Rozenblum (CLR) bounds state that
\begin{equation}\label{eq:CLR_bound}
N_{<0}(H)\lesssim_{\,d} \int_{\mathbb{R}^d} V^{\frac{d}{2}} , \quad d\ge3,
\end{equation}
for any $V$ in $L^{\frac{d}{2}}$. Throughout this paper, $a\lesssim_{y_1,\dots,y_n} b$ means that there exists a constant $C_{y_1,\dots,y_n}>0$ depending only on the parameters $y_1,\dots,y_n$ such that $a\leq C_{y_1,\dots,y_n}b$, and this constant may change from one line to the other.

The estimates \eqref{eq:CLR_bound} enjoy the important property that they are consistent with Weyl's semi-classical asymptotics. Namely, for sufficiently regular and fast-decaying $V$, 
\begin{equation*}
\lambda^{-\frac{d}{2}}N_{<0}(-\Delta-\lambda V)\to L_d\int_{\mathbb{R}^d} V^{\frac{d}{2}}, \quad \lambda\to\infty,
\end{equation*}
for some positive constant $L_d$ (see e.g. \cite[Section 4.1.1]{lieb_seiringer} or \cite[Theorem 4.28]{FrankLaptevWeidl23}). The CLR bounds were proven independently by Cwikel \cite{Cwikel77}, Lieb \cite{Lieb80} and Rozenblum \cite{Rozenblum76}. They are the crucial endpoint case of a more general family of bounds on the moments of the negative eigenvalues of $H$, the Lieb-Thirring inequalities \cite{LiebThirring76}, that in turn have important consequences for the stability of matter \cite{LiebThirring76_2,lieb_seiringer}. Estimating the best constant in the CLR bound \eqref{eq:CLR_bound} therefore remains a well-studied open problem. We refer to \cite{Hundertmark&al2023} for important recent progress regarding this question and to \cite{Hundertmark&al2023,FrankLaptevWeidl23,Frank21,Schimmer22} for detailed discussions concerning the history, applications, recent developments and open problems related to the Lieb-Thirring inequalities.

Note that the CLR bound \eqref{eq:CLR_bound} implies in particular that if $\|V\|_{L^{d/2}}$ is small enough, in dimension $\geq 3$, then $H$ has no bound states. The situation is different in dimension one or two. In these cases, it is well-known that $H$ has at least one bound state for any $V$ in $\mathrm{C}_0^\infty$ which is not identically zero (see e.g. \cite[Theorem XIII.11]{ReedSimon4}, see also the recent work \cite{Hoangetal23} for similar results for Schr\"odinger operators with general kinetic energies). In one-dimension, the estimate
\begin{equation}\label{eq:estim_dim1}
N_{<0}(H)-1\le  \int_{\mathbb{R}} |x|V(x)\mathrm{d}x , \quad d=1,
\end{equation}
was obtained in \cite{BLANKENBECLER77,KLAUS77}, as a consequence of Bargmann's bound \cite{Bargmann52}. See also \cite[Theorem XIII.9]{ReedSimon4} for other related bounds for central potentials in $3$-dimension.

The two-dimensional case is the most subtle one. In this case it is known that no estimate of the form
\begin{equation*}
N_{<0}(H)\lesssim 1 + \int_{\mathbb{R}} w(x) V(x)\mathrm{d}x ,
\end{equation*}
can hold, provided that $w$ is bounded in a neighborhood of at least one point \cite{Grigor'yan15}. Several papers have been devoted to estimating the number of bound states of $2$-dimensional Schr\"odinger operators in the recent years \cite{Solomyak94,BirmanLaptev96,Chadanetal03,LaptevSolomyak12,LaptevSolomyak13,Shargorodsky14,Grigor'yan15}. In particular, conditions on $V$ ensuring the semi-classical growth $N_{<0}(-\Delta-\lambda V)=\mathcal{O}(\lambda)$ as $\lambda\to\infty$ are derived in \cite{LaptevSolomyak12,LaptevSolomyak13}.
Among the various bounds obtained in $2$-dimension, we mention the following ones:
\begin{equation}\label{eq:estim_dim2}
N_{<0}(H) - 1 \lesssim   \int_{\mathbb{R}^2} (1+\ln\langle x\rangle)V(x)\mathrm{d}x - \int_{|x|\le1} (\ln|x|)V^*(|x|)\mathrm{d}x  , \quad d=2,
\end{equation}
and
\begin{equation}\label{eq:estim_dim2bis}
N_{<0}(H)-1\lesssim \int_{\mathbb{R}^2} (1+\ln\langle x\rangle)V(x)\mathrm{d}x + \|V\|_{L\,\mathrm{log} \, L} , \quad d=2.
\end{equation}
In \eqref{eq:estim_dim2}, $V^*$ stands for the decreasing rearrangement of $V$ defined, for all $t\in[0,\infty)$, by 
\begin{equation*}
V^*(t):=\inf \{ s \in [0,\infty) \, \mid \, \mu_V(s) \le t \},
\end{equation*}
where $\mu_V(s):=|\{x\in\mathbb{R}^2\,|\,|V(x)|>s\}|$. In \eqref{eq:estim_dim2bis}, $\|\cdot\|_{L\,\mathrm{log} \, L}$ stands for the norm in the Orlicz space $L\,\mathrm{log} \, L$ defined by 
\begin{equation*}
\|f\|_{L\,\mathrm{log} \, L}:=\inf\Big\{\kappa>0\, \mid \,\int_{\mathbb{R}^2}\Phi(|f|/\kappa)\le1\Big\},
\end{equation*}
with $\Phi(s)=s\ln(2+s)$ for all $s\in[0,\infty)$. Estimates \eqref{eq:estim_dim2} and \eqref{eq:estim_dim2bis} are proven in \cite{Shargorodsky14}; previously, estimate \eqref{eq:estim_dim2} was proven in the case where $V$ is radial, and conjectured in the general case, in \cite{Chadanetal03}; estimate \eqref{eq:estim_dim2bis} relies on previous important results obtained in \cite{Solomyak94}. We refer to \cite{Shargorodsky14} for further (and stronger) inequalities obtained in the two-dimensional case.

For $0<s<\frac{d}{2}$, one can similarly study the fractional Schr\"odinger operator
\begin{equation}\label{eq:def_Hs}
H_s := (-\Delta)^s - V(x)
\end{equation}
on $L^2(\mathbb{R}^d)$. The proof of the CLR bounds \eqref{eq:CLR_bound} extends to this case, leading to
\begin{equation}\label{eq:CLR_bounds}
N_{<0}(H_s)\lesssim_{\,d,s} \int_{\mathbb{R}^d} V^{\frac{d}{2s}} , \quad d\ge1 ,\quad 0<s<\frac{d}{2}.
\end{equation}
We refer to the review \cite{Frank14} and references therein for bounds on the number of negative eigenvalues and Lieb-Thirring inequalities for $H_s$ with $s<\frac{d}{2}$.

In this paper we consider the fractional Schr\"odinger operator \eqref{eq:def_Hs} in the case $s\geq\frac{d}{2}$. This includes in particular the critical case $s=\frac{d}{2}$, as well as ``polyharmonic Schr\"odinger operators'', namely the fractional Schr\"odinger operators $H_s$ with integer exponent $s\in\mathbb{N}$. For polyharmonic Schr\"odinger operators with $\N\ni s\ge \frac{d}{2}$, it was proven in \cite{EgorovKondratiev91,EgorovKondratiev96} that
\begin{equation}\label{eq:EgKon1}
N_{<0}(H_s)-s\lesssim_{\, s,q} \int_{\mathbb{R}} |x|^{2sq-1}V(x)^q \mathrm{d}x , \quad d=1 , \quad s\in\mathbb{N},\quad q\ge1,
\end{equation}
in one-dimension, and
\begin{align}\label{eq:EgKon2}
&N_{<0}(H_s)-\binom{d+n}{d} \notag \\
&\qquad
\lesssim_{\, d,s,q}\begin{cases}
\displaystyle{\int_{\mathbb{R}^d} |x|^{2sq-d}V(x)^q \mathrm{d}x} , \quad d \text{ odd},\quad s\in\mathbb{N},  \quad q>1, \vspace{0,2cm}\\
\displaystyle{\int_{\mathbb{R}^d} (1+|\ln|x||)^{2q-1}|x|^{2sq-d}V(x)^q} \mathrm{d}x ,\quad d \text{ even},  \quad s\in\mathbb{N}, \quad q>1,
\end{cases}
\end{align}
in any dimension, where $n= \lfloor s- \frac{d}{2} \, \rfloor$. Still for $s\ge d/2$, $s$ an integer, the Lieb-Thirring inequalities for moments of the negative eigenvalues of $H_s$ of order $\mu>1-\frac{d}{2s}$ have been obtained in \cite{Netrusov96}; moreover the asymptotics of $N_{<0}((-\Delta)^s-\lambda V)$ as $\lambda\to\infty$ has been studied in \cite{BirmanLaptev96,BirmanLaptevSolomyak97}, giving in particular sufficient conditions on $V$, for $d$ odd, to ensure the usual semi-classical behavior at large coupling.

Here we aim at proving a bound on $N_{<0}(H_s)$ in any dimension $d$ and for any real $s\ge \frac{d}{2}$, comparable to the bounds of the form \eqref{eq:estim_dim1} or \eqref{eq:EgKon1} (with $q=1$) in dimension one, or \eqref{eq:estim_dim2}--\eqref{eq:estim_dim2bis} in dimension two.

\subsection{Statement of the main result}

Before stating our main results, Theorems \ref{th: N_<0} and \ref{th: N_<0critical},  we recall and introduce some notations.
%

The symbol $\mathbb{N}$ denotes the set of integers larger than or equal to 1, and $\mathbb{N}_0:=\mathbb{N}\cup\{0\}$. We use the japanese bracket notation $\langle x\rangle := \sqrt{1+|x|^2}$ for $x\in\mathbb{R}^d$. We recall that for $1\leq p< \infty$, the Schatten ideals $\mathcal{L}^p$ (or trace ideals) and the weak trace ideals $\mathcal{L}^{p,\infty}$ are defined, respectively, as the spaces of compact operators $A$ such that the following quantities are finite:
\begin{equation}
\|A\|_{\mathcal{L}^{p}}:=\Big(\sum_{j\geq0}\lambda_j(A^*A)^{p/2}\Big)^{1/p}, \quad \|A\|_{\mathcal{L}^{p,\infty}}^*:=\sup_{j\geq0}(j+1)^{1/p}\sqrt{\lambda_j(A^*A)},
\end{equation} where $\lambda_j(A^*A)$ is the sequence of the eigenvalues of $A^*A$ sorted in decreasing order.
The star in the notation $\|\cdot\|_{\mathcal{L}^{p,\infty}}^*$ is a reminder that it is a quasinorm but not necessarily a norm. (See e.g. \cite{Simon76} for more information on the weak trace ideals $\mathcal{L}^{p,\infty}$.) Similarly, the space of bounded operators on $L^2$ is denoted by $\mathcal{L}^\infty$.

Our main results are the following.

\begin{theorem}[``Super-critical case'', $s>\frac{d}{2}$]
		\label{th: N_<0}
	Let $d\geq1$, $s>\frac{d}{2}$, $n= \lfloor s- \frac{d}{2}\, \rfloor$ and set~$v:=V^{\frac12}$. Then
\begin{equation*}
	N_{<0}(H_s) -\binom{d+n}{d} \lesssim_{\,d,s}
	\begin{cases}
			 \big\||x|^{s-\frac{d}{2}} \, v \big\|_{L^2}^2 & \textnormal{if } s-\frac{d}{2} \notin \N_0 ,\vspace{0,1cm}\\
			 \big\|\langle x \rangle ^{s-\frac{d}{2}}\sqrt{1+ \ln\langle x \rangle} \, v\big\|_{L^2}^2 & \textnormal{if } s-\frac{d}{2} \in \N ,
	\end{cases}
\end{equation*}
for all $v$ such that the right hand side is finite. 
\end{theorem}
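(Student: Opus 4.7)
The approach is the Birman--Schwinger principle combined with a frequency-space decomposition and a finite-rank correction of the low-energy part. For $E > 0$ the usual Birman--Schwinger inequality gives $N_{<-E}(H_s) \leq n(1, v((-\Delta)^s+E)^{-1} v)$, so it suffices to bound the right-hand side uniformly in $E>0$ by the claimed quantity. The main obstruction is that $|\xi|^{-2s}$ is not locally integrable at $\xi=0$ for $s \geq d/2$, so the formal limit $v(-\Delta)^{-s} v$ is not bounded and the standard CLR trace-class argument breaks down. To get around this I split with a sharp Fourier cutoff $P_< := \mathds{1}_{|\xi|\leq \mu}(-i\nabla)$ at a scale $\mu>0$, writing $v((-\Delta)^s+E)^{-1} v = K_<^E + K_>^E$ along $P_<$ and $P_> = I-P_<$. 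Both summands are non-negative, so Weyl's inequality yields
\begin{equation*}
n\big(1, v((-\Delta)^s+E)^{-1} v\big) \leq n(1/2, K_<^E) + n(1/2, K_>^E).
\end{equation*}
The high-frequency part is harmless when $s > d/2$: the Birman--Solomyak Hilbert--Schmidt formula for $f(x) g(-i\nabla)$ gives $\|K_>^E\|_{\mathcal{L}^1} \lesssim_{\,d,s} \mu^{d-2s}\|v\|_{L^2}^2$ uniformly in $E$, and this is less than $1/2$ for $\mu$ large enough, so $n(1/2, K_>^E)=0$.

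The heart of the proof is controlling $K_<^E$. Introduce
\begin{equation*}
V_n := \mathrm{span}\{ x^\alpha v : |\alpha| \leq n\} \subset L^2(\mathbb{R}^d),
\end{equation*}
which is well-defined and of dimension at most $\binom{d+n}{d}$ because the finiteness of the right-hand side of Theorem \ref{th: N_<0} forces $x^\alpha v \in L^2$ for every $|\alpha|\leq n$. Let $\Pi_n$ denote the orthogonal projection on $V_n$. Factor $K_<^E = B B^*$ with $B = v P_< ((-\Delta)^s+E)^{-1/2}$ and split $B = \Pi_n B + \Pi_n^\perp B$; since $\Pi_n B$ has rank $\leq \dim V_n$, the Ky Fan singular-value inequality gives
\begin{equation*}
n(1/2, K_<^E) \leq \binom{d+n}{d} + 2\big\|\Pi_n^\perp B\big\|_{\mathcal{L}^2}^2.
\end{equation*}
A direct Plancherel computation identifies the Hilbert--Schmidt norm as
\begin{equation*}
\big\|\Pi_n^\perp B\big\|_{\mathcal{L}^2}^2 \leq (2\pi)^{-d}\int_{|\xi|\leq\mu} |\xi|^{-2s}\,\big\|\Pi_n^\perp\!\big(v e^{i \cdot \xi}\big)\big\|_{L^2}^2 \,d\xi,
\end{equation*}
and since $\Pi_n^\perp$ realizes the best $L^2$-approximation of $v e^{ix\cdot\xi}$ by elements of $V_n$, taking the Taylor polynomial of $x\mapsto e^{ix\cdot\xi}$ as an admissible competitor gives $\|\Pi_n^\perp(v e^{i \cdot \xi})\|_{L^2}^2 \leq \int V(x)|R_n(x,\xi)|^2\,dx$, with $R_n(x,\xi) := e^{ix\cdot\xi} - \sum_{|\alpha|\leq n} (i\xi)^\alpha x^\alpha/\alpha!$ the $n$-th Taylor remainder.

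The final task is to prove the pointwise-in-$x$ bound
\begin{equation*}
\int_{\mathbb{R}^d} |\xi|^{-2s} |R_n(x,\xi)|^2\, d\xi \lesssim_{\,d,s} |x|^{2s-d}
\end{equation*}
in the non-integer case $s-d/2 \notin \mathbb{N}_0$, after which integration against $V(x)$ directly yields the claimed $\||x|^{s-d/2}v\|_{L^2}^2$. Using the complementary bounds $|R_n(x,\xi)|^2 \lesssim (|x||\xi|)^{2(n+1)}$ for $|x||\xi|\leq 1$ and $|R_n(x,\xi)|^2 \lesssim (|x||\xi|)^{2n}$ for $|x||\xi|\geq 1$, and splitting the $\xi$-integration at $|\xi|=1/|x|$, the estimate reduces in polar coordinates to the convergence of $\int_0^{1/|x|} r^{2(n+1)-2s+d-1}\,dr$ and $\int_{1/|x|}^\infty r^{2n-2s+d-1}\,dr$, both of which produce the same homogeneous scaling $|x|^{2s-d}$ precisely when $2(n+1)-2s+d>0$ and $2s-2n-d>0$ strictly---conditions equivalent to $n < s-d/2 < n+1$. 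This is the main obstacle: the exponents must line up on both sides of the dichotomy, and they do so only in the non-integer case. In the integer case $s-d/2 = n$ the second exponent vanishes, the integral $\int_{1/|x|}^\cdot r^{-1}dr$ generates the logarithm $\ln(\mu|x|)$, and this is precisely the origin of the $\sqrt{1+\ln\langle x\rangle}$ weight in the theorem; closing the argument there requires keeping $\mu$ bounded and absorbing the resulting $\mathcal{O}(1)$ high-frequency contribution into the low-frequency logarithmic term.
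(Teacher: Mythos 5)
Your proposal follows the same overall architecture as the paper's proof (Birman--Schwinger principle, frequency-space split, finite-rank correction by projecting off $\mathrm{span}\{x^\alpha v : |\alpha|\le n\}$), but with two genuinely different execution choices. First, for the low-frequency Hilbert--Schmidt bound you estimate the Taylor remainder $R_n(x,\xi)$ pointwise and prove the $\mu$-uniform bound $\int_{|\xi|\le\mu}|\xi|^{-2s}|R_n(x,\xi)|^2\,\mathrm{d}\xi\lesssim_{d,s}|x|^{2s-d}$ (for $s-\tfrac{d}{2}\notin\N_0$), then apply Fubini; the paper instead decomposes the ball $|\xi|<1$ into annuli $e^{-k-1}\le|\xi|<e^{-k}$ and splits $v=v_k^<+v_k^>$ adapted to each annulus, obtains the weight $\langle x\rangle^{s-d/2}$, and then upgrades to $|x|^{s-d/2}$ by a separate scaling argument $R\to\infty$. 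Your observation that the low-frequency bound is $\mu$-independent, combined with Weyl's inequality and a $\|v\|_{L^2}$-dependent choice of $\mu$ to make $n(1/2,K_>^E)=0$, bypasses that scaling step and produces the $|x|$-weight directly --- a nice streamlining. Second, the paper does not use Weyl's inequality; it fixes the cutoff at $1$ and simply adds the $\mathcal{L}^{1,\infty}$-quasinorms of the two pieces, so the high-frequency contribution is estimated, not eliminated.

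The one place where you should be more careful is the integer case $s-\tfrac{d}{2}\in\N$. There you cannot let $\mu\to\infty$: the outer part of the $\xi$-integral produces a factor $\ln(\mu|x|)$, and the $\ln\mu$ piece would contaminate the final constant with a dependence on $\|v\|_{L^2}$. Fixing $\mu$ (say $\mu=1$) is the right move, but then $n(1/2,K_>^E)$ is no longer zero and must be bounded by the trace: $n(1/2,K_>^E)\le 2\|K_>^E\|_{\mathcal{L}^1}\lesssim_{d,s}\|v\|_{L^2}^2$, which is $\mathcal{O}(\|v\|_{L^2}^2)$, not ``$\mathcal{O}(1)$'' as you write. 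This is still fine, but only because $\|v\|_{L^2}^2\le\big\|\langle x\rangle^{s-d/2}\sqrt{1+\ln\langle x\rangle}\,v\big\|_{L^2}^2$, so the high-frequency contribution is absorbed into the claimed right-hand side --- precisely the mechanism the paper uses. A minor second point: finiteness of $\||x|^{s-d/2}v\|_{L^2}$ alone does not force $x^\alpha v\in L^2$ for $|\alpha|\le n$, since the weight $|x|^{s-d/2}$ vanishes at the origin; you also need $v\in L^2$, which is in any case required for the Birman--Schwinger operator to make sense (and is the paper's standing Hypothesis~\ref{hyp:v-high-frequency}), so the gap is only in the stated justification, not in substance.
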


We have the following accompanying remarks. As usual, for $\alpha\in\mathbb{N}_0^d$ and $x\in\mathbb{R}^d$, we use the notations $x^\alpha=\prod_{j=1}^d x_j^{\alpha_j}$ and $|\alpha|=\sum_{j=1}^d \alpha_j$.

\begin{remark}
The constant $\binom{d+n}{d}$ can be replaced by the possibly smaller constant
\begin{equation*}
\mathrm{c}_{d,n}(v) := \mathrm{dim}\,\,\mathrm{span}\big\{ x^\alpha v \, | \, \alpha\in\mathbb{N}_0^d , \, |\alpha|\le n \big\}
\end{equation*}
(it is not difficult to see that the maximal dimension of the vector space $\mathrm{span}\big\{ x^\alpha v \, | \, \alpha\in\mathbb{N}_0^d , \, |\alpha|\le n \big\}$ is $\binom{d+n}{d}$, see the proof of Theorem \ref{th: N_<0} below). On the other hand, the constant $\mathrm{c}_{d,n}(v)$ cannot be removed from the estimate of Theorem \ref{th: N_<0}, in the sense that there are potentials $V$ in $\mathrm{C}_0^\infty(\mathbb{R}^d)$ such that $H_s$ has at least $\mathrm{c}_{d,n}(v)$ bound states. More precisely, we will prove that for all $V\in\mathrm{C}_0^\infty(\mathbb{R}^d)$, $V\ge0$, the operator $H_s$ has at least $\mathrm{c}_{d,n}(v)$
negative eigenvalues counting multiplicities. See Proposition \ref{prop:binom_constant} below.
\end{remark}

\begin{remark}
In the endpoint case $s=d/2$, the bound stated in Theorem \ref{th: N_<0} does not hold. Indeed, if it were true, then it would imply that, for $d=2$, 
\begin{equation*}
N_{<0}(-\Delta-V(x)) - 1 \lesssim \|\sqrt{1+\ln \langle x\rangle} \, v\|^2_{L^2},
\end{equation*}
 which cannot hold, as discussed in the introduction and proven in \cite{Grigor'yan15}.
\end{remark}

\begin{remark}
Since the operators $H_s$ and $(-\Delta)^s-V(x+x_0)$ are unitarily equivalent, for any $x_0\in\R^d$, the weights $|x|^{s-\frac{d}{2}}$ and $\langle x \rangle ^{s-\frac{d}{2}}\sqrt{1+ \ln\langle x \rangle}$ in the estimate of Theorem \ref{th: N_<0} can be replaced by $|x-x_0|^{s-\frac{d}{2}}$ and $\langle x-x_0 \rangle^{s-\frac{d}{2}}\sqrt{1+ \ln\langle x-x_0 \rangle}$, respectively.
\end{remark}

%

We also note that, for $d=1$ and $s=1$, Theorem \ref{th: N_<0} gives, for the usual Schr\"odinger operator $H=-\Delta-V(x)$,
\begin{equation*}
N_{<0}(H)-1\lesssim \int_{\mathbb{R}}|x| V(x)\mathrm{d}x, \quad d=1.
\end{equation*}
The Bargmann estimate \eqref{eq:estim_dim1}, which follows from the explicit expression of Green's operator in one-dimension, is of course stronger, since it gives the same estimate but with a constant equal to $1$ in front of the integral in the right hand side (instead of the implicit constant we obtain). Likewise, for $d=1$ and $s\in\N$, Theorem \ref{th: N_<0} gives
\begin{equation*}
N_{<0}(H_s)-s\lesssim_{\, s} \int_{\mathbb{R}} |x|^{2s-1}V(x) \mathrm{d}x , \quad d=1 , \quad s\in\mathbb{N},
\end{equation*}
which is \eqref{eq:EgKon1} in the particular case where $q=1$. Our result therefore shows how \eqref{eq:estim_dim1}, and \eqref{eq:EgKon1} with $q=1$, can be generalized to any dimension for the fractional Schr\"odinger operator $H_s$, with any real $s>d/2$.

For $d$ odd and $\N \ni s\ge d/2$, our result also corresponds to the endpoint case $q=1$ in the family of estimates \eqref{eq:EgKon2} proven by Egorov and Kondratiev \cite{EgorovKondratiev96}. Note that the endpoint case $q=1$ was left open in \cite{EgorovKondratiev96}. Note also that our proof is very different from that in \cite{EgorovKondratiev96}, see the next subsection for a description of the strategy followed in this paper. In the case where $d$ is even, and with $s>d/2$, our result corresponds again to $q=1$ in \eqref{eq:EgKon2}, except for the local behavior of $V$, in that our bound requires that $V$ is $L^1$ near the origin, while \eqref{eq:EgKon2} with $q=1$ would only require that $(\ln|x|) |x|^{2s-d}V(x)$ is $L^1$ near $0$. 

Our result in the critical case $s=d/2$ is stated in terms of the harmonic oscillator
\begin{equation*}
\mathbf{h}:=c_d(-\Delta+x^2),
\end{equation*}
where the constant $c_d$ is chosen, for technical convenience, as $c_d:=e^e/d$ (so that $\mathbf{h}\ge e^e$). We then have the following result.
\begin{theorem}[``Critical case'', $s=\frac{d}{2}$]
		\label{th: N_<0critical}
	Let $d\geq1$, $s=\frac{d}{2}$, $\varepsilon>0$ and set~$v:=V^{\frac12}$. Then 
\begin{equation*}
	N_{<0}(H_s) - 1 \lesssim_{\,d,\varepsilon} \big \|(\ln \mathbf{h})^{\frac12}(\ln\ln \mathbf{h})^{\frac12+\varepsilon}\,v\big\|^2_{L^2} ,
\end{equation*}
for all $v$ such that the right hand side is finite. 
\end{theorem}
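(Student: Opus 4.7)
The plan is to prove the bound by combining the Birman--Schwinger principle with a low/high-momentum splitting of the associated Birman--Schwinger operator, handling the low-momentum singularity via a rank-1 projection (which accounts for the ``$-1$'' in the bound) and the high-momentum part via a refined Cwikel-type estimate in $\mathcal{L}^{2,\infty}$ that produces exactly the weight $(\ln\mathbf{h})^{1/2}(\ln\ln\mathbf{h})^{1/2+\varepsilon}$.

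First, I would apply (an appropriate limiting version of) the Birman--Schwinger principle to reduce the problem to bounding $N_{\geq 1}(\mathcal{B})$, where $\mathcal{B}=v(-\Delta)^{-d/2}v$ is interpreted as the monotone limit of $v((-\Delta)^{d/2}+\lambda)^{-1}v$ as $\lambda\downarrow 0$. Since $s=d/2$ is critical, $(-\Delta)^{-d/2}$ is singular at zero momentum, so $\mathcal{B}$ only makes sense as a quadratic form. Let $P$ be the rank-1 orthogonal projection in $L^2$ onto $\mathrm{span}(v)$. The min-max principle then gives $N_{\geq 1}(\mathcal{B})\leq 1+N_{\geq 1}(P^\perp\mathcal{B}P^\perp)$, which produces the ``$-1$'' in the statement. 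On the range of $P^\perp$, the moment condition $\widehat{v\phi}(0)=\langle v,\phi\rangle=0$ regularises the low-momentum singularity, making $P^\perp\mathcal{B}P^\perp$ a bona fide positive operator.

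Next I would split $\mathcal{B}=\mathcal{B}_\leq+\mathcal{B}_>$ using a spectral cutoff of $-\Delta$ at a fixed scale (say, $-\Delta\leq 1$ vs.\ $-\Delta>1$), and bound $N_{\geq 1}(P^\perp\mathcal{B}P^\perp)$ by $2\|P^\perp\mathcal{B}_\leq P^\perp\|_{\mathcal{L}^1}+2\|\mathcal{B}_>\|_{\mathcal{L}^{1,\infty}}^*$ via Markov/Ky-Fan type inequalities. For the low-energy contribution, using $\widehat{v\phi}(\xi)=\int(e^{-ix\xi}-1)v(x)\phi(x)\diff x$ for $\phi\in P^\perp L^2$ and a direct Hilbert--Schmidt computation, the integral $\int_{|\xi|\leq 1}|\xi|^{-d}|e^{-ix\xi}-\widehat{v^2}(\xi)/\|v\|^2|^2 \diff\xi$ splits into a Taylor regime near $\xi=0$ and a bounded regime at larger $|\xi|$, giving $\|P^\perp\mathcal{B}_\leq P^\perp\|_{\mathcal{L}^1}\lesssim \int v^2\ln\langle x\rangle\diff x$. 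Since $|x|^2\leq \mathbf{h}/c_d$ and the logarithm is operator monotone, this is dominated by $\|(\ln\mathbf{h})^{1/2}v\|_{L^2}^2$, and hence by the weighted norm in the statement.

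For the high-energy part, writing $\mathcal{B}_>=T^*T$ with $T=(-\Delta)^{-d/4}\mathbf{1}(-\Delta>1)v$, the key ingredient is a Cwikel-type estimate in $\mathcal{L}^{2,\infty}$ adapted to the critical exponent: one proves
\[
\|T\|_{\mathcal{L}^{2,\infty}}^*\lesssim_\varepsilon \big\|(\ln\mathbf{h})^{1/2}(\ln\ln\mathbf{h})^{1/2+\varepsilon}v\big\|_{L^2},
\]
which yields $N_{\geq 1/2}(\mathcal{B}_>)\leq 2\|\mathcal{B}_>\|_{\mathcal{L}^{1,\infty}}^*=2(\|T\|_{\mathcal{L}^{2,\infty}}^*)^2\lesssim\|(\ln\mathbf{h})^{1/2}(\ln\ln\mathbf{h})^{1/2+\varepsilon}v\|_{L^2}^2$. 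I would prove this refined Cwikel estimate by a dyadic decomposition of $v$ in eigenspaces of $\mathbf{h}$ indexed by Hermite levels $\lambda_k\sim k$: on each dyadic block $\{e^{k}\leq\ln\mathbf{h}<e^{k+1}\}$ one obtains a Hilbert--Schmidt-type bound giving a factor $\|v_k\|_{L^2}^2\cdot(\text{const})$, and the iterated-logarithm factor $(\ln\ln\mathbf{h})^{1/2+\varepsilon}$ appears precisely because one needs the borderline summability $\sum_k 1/(k(\ln k)^{1+2\varepsilon})<\infty$ to reassemble the dyadic pieces into a single weak-$\mathcal{L}^{2}$ bound. The main obstacle is establishing this refined Cwikel inequality: the naive $\mathcal{L}^{2,\infty}$-Cwikel bound with the plain $L^2$-norm of $v$ is borderline at the critical exponent (compatible with Grigor'yan's no-go result cited in the paper, which forbids any analogous bound with a \emph{position-space} weight), so one must genuinely exploit the non-commutative character of the $\mathbf{h}$-weight. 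Combining the low- and high-energy contributions, and using that $(\ln\ln\mathbf{h})^{1/2+\varepsilon}\geq 1$ since $\mathbf{h}\geq e^e$, yields the claimed estimate $N_{<0}(H_{d/2})-1\lesssim_{d,\varepsilon}\|(\ln\mathbf{h})^{1/2}(\ln\ln\mathbf{h})^{1/2+\varepsilon}v\|_{L^2}^2$.
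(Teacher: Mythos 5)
Your overall strategy mirrors the paper's exactly: Birman--Schwinger, a low/high momentum split, a rank-1 projection onto $\mathrm{span}(v)$ to absorb the ``$-1$'', a direct trace computation for the low-momentum piece, and a refined Cwikel-type estimate in $\mathcal{L}^{2,\infty}$ with the harmonic-oscillator weight $(\ln\mathbf{h})^{1/2}(\ln\ln\mathbf{h})^{1/2+\varepsilon}$ for the high-momentum piece. The dyadic decomposition of $v$ along the spectral blocks $\{e^k\leq\ln\mathbf{h}<e^{k+1}\}$, with a double-exponential scale $\Lambda_k=e^{e^k}$, is also the paper's choice, and your identification of the summability $\sum_k k^{-1-2\varepsilon}<\infty$ as the origin of the iterated logarithm is essentially correct (the paper uses $\sum_k k^{-1-2\varepsilon}$ via Cauchy--Schwarz, not $\sum_k 1/(k(\ln k)^{1+2\varepsilon})$, but that is a minor discrepancy).

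The gap is in the proof of the key Cwikel-type estimate. You assert that on each dyadic block $\pi_k v$ ``one obtains a Hilbert--Schmidt-type bound giving a factor $\|v_k\|_{L^2}^2\cdot(\text{const})$.'' This cannot be the mechanism: the critical symbol $g(\xi)=|\xi|^{-d/2}\mathds{1}_{|\xi|>1}$ fails to lie in $L^2$, so $(\pi_k v)(x)\,g(-i\nabla)$ is \emph{not} Hilbert--Schmidt no matter how nice $\pi_k v$ is, and no direct $\mathcal{L}^2$-type estimate produces the per-block factor $\|\pi_k v\|_{L^2}$. What actually makes the block estimate work is a genuinely different interpolation: one writes $g^2=g_pg_{p'}$ with conjugate exponents $p>2>p'$, applies H\"older's inequality in the weak trace ideals $\mathcal{L}^{p,\infty}\cdot\mathcal{L}^{p',\infty}\subset\mathcal{L}^{1,\infty}$, and then uses Cwikel's $\mathcal{L}^{p,\infty}$ estimate for the $p>2$ factor and Simon's $\ell^{p',\infty}(L^2)$ estimate for the $p'<2$ factor, both with explicit constants $(p-2)^{-1/p}$ and $(2-p')^{1/p'-1}$ that blow up as $p\to2$. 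The point is then to choose $p=p(k)$ so that $\tfrac1p=\tfrac12-\tfrac{\delta}{d\ln\Lambda_{k+1}}$: this tames the Sobolev factor $\Lambda_{k+1}^{d(1/2-1/p)}$ to a constant, at the price of a prefactor $[(p-2)(2-p')]^{-1/p}\lesssim\ln\Lambda_{k+1}=e^{k+1}$, which is exactly the growth that, when fed into the Cauchy--Schwarz sum, produces the weight $(\ln\mathbf{h})^{1/2}(\ln\ln\mathbf{h})^{1/2+\varepsilon}$. Your sketch does not identify any of these ingredients (H\"older in weak ideals, the endpoint constants in Cwikel's and Simon's theorems, the $k$-dependent choice of $p$), and without them there is no way to obtain a per-block estimate with only $\|\pi_kv\|_{L^2}$ on the right-hand side. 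As you yourself note, this is the main obstacle; the ``Hilbert--Schmidt-type'' route you propose does not clear it.

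Two minor remarks on the low-energy part. First, the bound you state, $\|P^\perp\mathcal{B}_\leq P^\perp\|_{\mathcal{L}^1}\lesssim\int v^2\ln\langle x\rangle\,\mathrm{d}x$, should read $\int v^2(1+\ln\langle x\rangle)\,\mathrm{d}x$ since $\ln\langle x\rangle$ vanishes at the origin. Second, the comparison $\langle x\rangle^2\leq 1+\mathbf{h}/c_d$ and operator monotonicity of $\ln$ do give $1+\ln\langle x\rangle\lesssim\ln\mathbf{h}$ (using $\mathbf{h}\geq e^e$), so your observation that the low-energy contribution is dominated by the $\mathbf{h}$-weighted norm is correct, though the paper leaves this absorption implicit.
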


Theorem \ref{th: N_<0critical} should be compared with the bounds \eqref{eq:estim_dim2}--\eqref{eq:estim_dim2bis} for $H=-\Delta-V(x)$ in dimension $2$. In particular, similarly as in \eqref{eq:estim_dim2}--\eqref{eq:estim_dim2bis}, our estimate requires both a logarithmic decay and a ``logarithmic regularity'' of $v$, encoded here in the condition that $v$ belongs to the domain of $(\ln \mathbf{h})^{1/2}$. The slightly stronger requirement that $v$ belongs to the (smaller) domain of $(\ln \mathbf{h})^{1/2}(\ln\ln \mathbf{h})^{1/2+\varepsilon}$ may be an artifact of our proof.

\subsection{Elements of the proof and auxiliary results}

Our proof of Theorems~\ref{th: N_<0} and~\ref{th: N_<0critical} starts with a usual application of the Birman-Schwinger principle \cite{Birman61,Schwinger61}. In our context, it states that, for all $E<0$,
\begin{equation}
	\label{comp: B-S}
N_{\le E}(H_s) = N_{\ge 1}(K_E),
\end{equation}
where $N_{\le E}(A)$ (respectively $N_{\ge E}(A)$) denote the number of eigenvalues less than or equal to $E$ (respectively larger than or equal to $E$) of a self-adjoint operator $A$, and the Birman-Schwinger operator $K_E$ is defined by
\begin{equation*}
K_E := v(x) \big( (-\Delta)^s - E \big )^{-1} v(x)\,, \quad E<0\,.
\end{equation*}
Recall that we have set
\begin{equation*}
v:=V^{\frac12}.
\end{equation*}
For the convenience of the reader, a proof of the Birman-Schwinger principle \eqref{comp: B-S} under our assumptions is recalled in Appendix \ref{app: Birman-S}.

Next, recalling that $n= \lfloor s- \frac{d}{2}\, \rfloor$, we introduce the finite-dimensional vector space
\begin{equation}\label{eq:def_Fn}
	\mathcal{F}_n:=\mathrm{span}\big\{ x^\alpha v \, | \, \alpha\in\mathbb{N}_0^d , \, |\alpha|\le n \big\} \subset L^2 \,.
\end{equation}
The Birman-Schwinger operator is then split into its `low- and high-frequencies' parts. More precisely, we set
\begin{align}
	\label{def: K_0 + K_inf}
 K_{E,<1}:= &  \, v(x) \, \big( (-\Delta)^s - E \big )^{-1} \mathds{1}_{|-i\nabla|<1} \, v(x) \,, & K^\perp_{E,<1} & := \Pi_{\mathcal{F}_n}^\perp K_{E,<1}\Pi_{\mathcal{F}_n}^\perp, \\
 K_{E,>1}:= & \, v(x) \, \big( (-\Delta)^s - E \big )^{-1} \mathds{1}_{|-i\nabla|>1} \, v(x) \,, & K^\perp_{E,>1} & := \Pi_{\mathcal{F}_n}^\perp K_{E,>1}\Pi_{\mathcal{F}_n}^\perp, 	\label{def: K_0 + K_inf2}
\end{align}
where $\Pi_{\mathcal{F}_n}^\perp$ denotes the orthogonal projection onto $\mathcal{F}_n^\perp$.

The variational principle (which we recall in Appendix \ref{app:variational}) then yields
\begin{equation}
	\label{comp: var prin}
N_{\ge 1}(K_E) \le \mathrm{dim}(\mathcal{F}_n) + N_{\ge1}(K_E^\perp),
\end{equation}
where $K_E^\perp = K^\perp_{E,<1} + K^\perp_{E,>1}$. It is not difficult to verify that
\begin{equation*}
\mathrm{dim}(\mathcal{F}_n) \le \binom{d+n}{d},
\end{equation*}
(see Eq. \eqref{eq:dimFn} in the proof of Theorem~\ref{th: N_<0}). Now the splitting into high- and low-frequencies comes into play, as we can write
\begin{equation}
N_{\ge1}(K_E^\perp) \leq 2 \big\|K_{E,>1}^\perp\big\|_{\mathcal{L}^{1,\infty}}^* + 2\big\|K^\perp_{E,<1}\big\|_{\mathcal{L}^{1,\infty}}^* \leq 2 \big\|K_{E,>1}\big\|_{\mathcal{L}^{1,\infty}}^* + 2\big\|K^\perp_{E,<1}\big\|_{\mathcal{L}^1} .
\end{equation}
Note that we have estimated $\|K^\perp_{E,>1}\|_{\mathcal{L}^{1,\infty}}^*\le\|K_{E,>1}\|_{\mathcal{L}^{1,\infty}}^*$, namely we do not use the orthogonal projection $\Pi_{\mathcal{F}_n}^\perp$ for the high-frequencies part. On the other hand, to estimate the low-frequencies part, the orthogonal projection $\Pi_{\mathcal{F}_n}^\perp$ plays a crucial role, but it suffices to estimate the trace norm of $K^\perp_{E,<1}$ instead of the more complicated quasi-norm in $\mathcal{L}^{1,\infty}$.

Theorems~\ref{th: N_<0} and \ref{th: N_<0critical} are then consequences of the following two theorems.
\begin{theorem}[Low-frequencies estimate]\label{thm:low}
Let $d\geq1$, $s\geq d/2$ and $E\le0$. Then
	\begin{equation}\label{eq:low-freq}
		\big\|K^\perp_{E,<1}\big\|_{\mathcal{L}^1} 
		\lesssim_{\,d,s}\begin{cases}
			 \big\|\langle x \rangle ^{s-\frac{d}{2}}\sqrt{1+ \ln\langle x \rangle} \, v \big\|_{L^2}^2 & \textnormal{if } s-\frac{d}{2} \in \N_0 , \vspace{0,1cm} \\
			 \big\|\langle x \rangle ^{s-\frac{d}{2}}\, v \big\|_{L^2}^2 & \textnormal{if } s-\frac{d}{2} \notin \N_0 ,
		\end{cases}
	\end{equation}
	for all $v$ such that the right hand side is finite. 
\end{theorem}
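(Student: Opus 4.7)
The plan is to exploit the positivity of $K^\perp_{E,<1}$ together with the rank-one decomposition of $K_{E,<1}$ coming from Fourier representation of the resolvent, and then bound $\|\Pi_{\mathcal{F}_n}^\perp[v(x)e^{ix\cdot\xi}]\|_{L^2}$ uniformly in $\xi$ via Taylor's formula around $x=0$.

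First, since $K_{E,<1}$ is a positive operator, the compression $K^\perp_{E,<1}=\Pi_{\mathcal{F}_n}^\perp K_{E,<1}\Pi_{\mathcal{F}_n}^\perp$ is positive as well, so $\|K^\perp_{E,<1}\|_{\mathcal{L}^1}=\Tr(K^\perp_{E,<1})$. Representing the resolvent through its Fourier symbol yields the rank-one decomposition
\begin{equation*}
K_{E,<1}=(2\pi)^{-d}\int_{|\xi|<1}\frac{1}{|\xi|^{2s}-E}\,|v\,e^{ix\cdot\xi}\rangle\langle v\,e^{ix\cdot\xi}|\,\mathrm{d}\xi,
\end{equation*}
so that conjugating by $\Pi_{\mathcal{F}_n}^\perp$ and taking the trace gives
\begin{equation*}
\|K^\perp_{E,<1}\|_{\mathcal{L}^1}=(2\pi)^{-d}\int_{|\xi|<1}\frac{\|\Pi_{\mathcal{F}_n}^\perp[v(x)e^{ix\cdot\xi}]\|_{L^2}^{2}}{|\xi|^{2s}-E}\,\mathrm{d}\xi,
\end{equation*}
and since $E\le0$ I may replace $|\xi|^{2s}-E$ by $|\xi|^{2s}$ in the denominator.

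Next, I approximate $e^{ix\cdot\xi}$ by its Taylor polynomial $p_n(x,\xi):=\sum_{|\alpha|\le n}(ix\cdot\xi)^{\alpha}/\alpha!$ in the $x$-variable. The crucial point is that $v(x)p_n(x,\xi)\in\mathcal{F}_n$ for every $\xi$, hence
\begin{equation*}
\|\Pi_{\mathcal{F}_n}^\perp[v(x)e^{ix\cdot\xi}]\|_{L^2}\le\|v(x)\bigl(e^{ix\cdot\xi}-p_n(x,\xi)\bigr)\|_{L^2}.
\end{equation*}
Combining Taylor's remainder with the trivial bound $|e^{ix\cdot\xi}|\le1$ yields the pointwise estimate
\begin{equation*}
|e^{ix\cdot\xi}-p_n(x,\xi)|\le C_n\min\bigl(|x|^{n+1}|\xi|^{n+1},\;1+|x|^{n}|\xi|^{n}\bigr),
\end{equation*}
sharp for $|x||\xi|\lesssim1$ and $|x||\xi|\gtrsim1$, respectively.

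Inserting this, switching to spherical coordinates in $\xi$, and applying Tonelli, I reduce the problem to estimating, for each fixed $x$, the radial integral
\begin{equation*}
I(x):=\int_0^1 r^{d-1-2s}\min\bigl(r^{2(n+1)}|x|^{2(n+1)},\;1+r^{2n}|x|^{2n}\bigr)\,\mathrm{d}r.
\end{equation*}
Splitting at $r=\min(1,1/|x|)$ and using $d+2(n+1)-2s>0$ (since $n+1>s-d/2$), I expect $I(x)\lesssim_{d,s}\langle x\rangle^{2(s-d/2)}$ when $s-d/2\notin\N_0$, while in the integer case $s-d/2\in\N_0$ the piece with exponent $d-1-2s+2n=-1$ produces the logarithmic divergence $\int_{1/|x|}^{1}r^{-1}\mathrm{d}r=\ln|x|$, giving $I(x)\lesssim_{d,s}\langle x\rangle^{2(s-d/2)}(1+\ln\langle x\rangle)$. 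Feeding this back into the outer integral against $|v(x)|^2$ recovers the two bounds in~\eqref{eq:low-freq}. The main obstacle is carrying out the borderline radial analysis cleanly and checking that $n=\lfloor s-d/2\rfloor$ is precisely the correct truncation level: a smaller one would leave a non-integrable low-frequency singularity in $\xi$, whereas for a larger one the extra monomials $x^{\alpha}v$ with $|\alpha|=n+1$ would in general not belong to $L^2$, so that $\Pi_{\mathcal{F}_n}^\perp$ cannot remove them. Thus the choice of $\mathcal{F}_n$ with polynomial degree at most $n$ is dictated by the singularity structure of the Birman--Schwinger kernel at the bottom of the spectrum of $(-\Delta)^s$.
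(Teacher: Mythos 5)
Your proposal is correct and establishes the theorem by a route that is technically different from, though in the same spirit as, the paper's. Both proofs start from the integral representation of the trace (Lemma~\ref{lem:integral-form-for-trace}) and both exploit that $\Pi^\perp_{\mathcal{F}_n}$ annihilates $v(x)p_n(x,\xi)$, where $p_n$ is the degree-$n$ Taylor polynomial of $\theta\mapsto e^{i\theta}$ at $\theta=x\cdot\xi$. From there the paper decomposes $\{|\xi|<1\}$ into dyadic annuli $\sigma_{k+1}\le|\xi|<\sigma_k$ with $\sigma_k=e^{-k}$, simultaneously cuts $v$ as $v=v_k^<+v_k^>$ at radius $|x|=e^k$, and treats the three resulting families of terms $A_1$, $A_2$, $B$, summing over $k$ via a Riemann-sum comparison. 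You instead prove the single pointwise estimate $|e^{ix\cdot\xi}-p_n(x,\xi)|\lesssim_n\min\bigl(|x|^{n+1}|\xi|^{n+1},\,1+|x|^n|\xi|^n\bigr)$ and apply Tonelli, reducing everything to one radial integral $I(x)$ in the $\xi$-variable. This avoids both the dyadic decomposition and the spatial cutoff of $v$, and it makes the source of the logarithm in the integer case $s-\frac{d}{2}\in\N_0$ transparent: it arises precisely from the borderline exponent $d-1-2s+2n=-1$. Your radial computations check out (for $|x|\le1$ one gets $I(x)\lesssim1$; for $|x|>1$ splitting at $r=1/|x|$ gives $\lesssim|x|^{2s-d}$ in the non-integer case and $\lesssim|x|^{2s-d}\ln|x|$ in the integer case), and your closing remark on why $n=\lfloor s-\frac{d}{2}\rfloor$ is forced is the right heuristic. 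One small caveat: $v\,p_n(\cdot,\xi)\in\mathcal{F}_n\subset L^2$ only because the finiteness of the right-hand side ensures $x^\alpha v\in L^2$ for $|\alpha|\le n$; you should note this explicitly to justify the inequality $\|\Pi^\perp_{\mathcal{F}_n}[v\,e^{ix\cdot\xi}]\|_{L^2}\le\|v(e^{ix\cdot\xi}-p_n)\|_{L^2}$.
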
 
\begin{theorem}[High-frequencies estimate]\label{thm:high}
Let $d\geq1$, $s\geq d/2$, $\varepsilon>0$ 
and $E\leq 0$. Then
\begin{equation}\label{eq:high-frequency-bound}
	\big\|K_{E,>1}\big\|_{\mathcal{L}^{1,\infty}}^*
	\begin{cases}
	\lesssim_{\,d,\varepsilon} \,\big \|(\ln \mathbf{h})^{\frac12}(\ln\ln \mathbf{h})^{\frac12+\varepsilon}\,v\big\|^2_{L^2} 	 & \textnormal{if } s=\frac{d}{2} , \vspace{0,1cm}\\
	\lesssim_{\,d,s} \, \|v\|_{L^2}^2 & \textnormal{if } s>\frac{d}{2} ,
	\end{cases}
\end{equation}
for all $v$ such that the right hand side is finite. 
\end{theorem}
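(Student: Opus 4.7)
The plan is to begin with a polar-type factorization. Set $g(\xi):=\mathds{1}_{|\xi|>1}(|\xi|^{2s}-E)^{-1/2}$, which is non-negative, bounded, and satisfies $g^2=(|\xi|^{2s}-E)^{-1}\mathds{1}_{|\xi|>1}$. Then $S:=g(-i\nabla)$ is a bounded self-adjoint operator, and
\begin{equation*}
K_{E,>1}=v(x)\,S^2\,v(x)=AA^*,\qquad A:=v(x)\,g(-i\nabla).
\end{equation*}
Since the singular values of $AA^*$ are the squares of those of $A$, the definition of the quasinorm yields the identity $\|AA^*\|_{\mathcal{L}^{1,\infty}}^*=\bigl(\|A\|_{\mathcal{L}^{2,\infty}}^*\bigr)^2$, and the problem is reduced to bounding $\|v(x)g(-i\nabla)\|_{\mathcal{L}^{2,\infty}}^*$.

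For the super-critical case $s>\frac{d}{2}$, the multiplier $g$ itself lies in $L^2$: since $E\le 0$,
\begin{equation*}
\|g\|_{L^2}^2=\int_{|\xi|>1}(|\xi|^{2s}-E)^{-1}\mathrm{d}\xi\le\int_{|\xi|>1}|\xi|^{-2s}\mathrm{d}\xi<\infty
\end{equation*}
because $2s>d$. A direct Plancherel computation of the integral kernel of $v(x)g(-i\nabla)$ (the Kato--Seiler--Simon identity) then gives $\|v(x)g(-i\nabla)\|_{\mathcal{L}^2}^2=(2\pi)^{-d}\|v\|_{L^2}^2\|g\|_{L^2}^2\lesssim_{d,s}\|v\|_{L^2}^2$, and the continuous embedding $\mathcal{L}^2\hookrightarrow\mathcal{L}^{2,\infty}$ closes this regime.

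The critical case $s=\frac{d}{2}$ is the substantive one. Now $|g(\xi)|^2\sim|\xi|^{-d}$ fails to be integrable at infinity, so the Hilbert--Schmidt norm of $A$ diverges logarithmically, while $g$ still belongs to $L^{2,\infty}$ with $\|g\|_{L^{2,\infty}}^*\lesssim_d 1$ uniformly in $E\le 0$. One must therefore work directly in the weak ideal, via a critical-endpoint Cwikel-type bound of the form
\begin{equation*}
\|v(x)g(-i\nabla)\|_{\mathcal{L}^{2,\infty}}^*\lesssim_{d,\varepsilon}\bigl\|(\ln\mathbf{h})^{1/2}(\ln\ln\mathbf{h})^{1/2+\varepsilon}v\bigr\|_{L^2}\cdot\|g\|_{L^{2,\infty}}^*,
\end{equation*}
a weighted $p=2$ analogue of the classical Cwikel inequality (which in its naked form is only valid for $p>2$). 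Combined with the factorization above and the uniform control $\|g\|_{L^{2,\infty}}^*\lesssim_d 1$, this produces the claimed critical bound.

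The hard part is establishing this endpoint Cwikel-type estimate. Standard Marcinkiewicz interpolation breaks down at $p=2$, and in general $f\in L^2$ together with $g\in L^{2,\infty}$ is insufficient to place $f(x)g(-i\nabla)$ in $\mathcal{L}^{2,\infty}$, so a logarithmic price must be paid. A natural route is to decompose $g$ dyadically on frequency annuli $2^k\le|\xi|<2^{k+1}$ and bound each localized piece via Hilbert--Schmidt: the spectral weight $(\ln\mathbf{h})^{1/2}$ then absorbs the logarithmic number of active dyadic scales through a Hermite expansion of $v$, while the iterated-logarithm factor $(\ln\ln\mathbf{h})^{1/2+\varepsilon}$ provides the Cauchy--Schwarz summability needed to upgrade the scale-by-scale control into a uniform $\mathcal{L}^{2,\infty}$ estimate. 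This is the delicate harmonic-analytic ingredient on which the critical case rests.
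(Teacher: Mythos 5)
Your reduction to a weak-$\mathcal{L}^2$ bound for $v(x)g(-i\nabla)$ via the $AA^*$ factorization, and your treatment of the super-critical case $s>d/2$ by Hilbert--Schmidt plus the embedding $\mathcal{L}^2\hookrightarrow\mathcal{L}^{2,\infty}$, are correct and coincide with the paper's argument (up to the minor detail that the paper first uses operator monotonicity to replace $(|\xi|^{2s}-E)^{-1}$ by $|\xi|^{-2s}$, eliminating the $E$-dependence, whereas you carry $E$ along; both are fine).

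There is, however, a genuine gap in your critical case. The endpoint Cwikel estimate you invoke,
\[
\|v(x)g(-i\nabla)\|_{\mathcal{L}^{2,\infty}}^*\lesssim_{d,\varepsilon}
\bigl\|(\ln\mathbf{h})^{1/2}(\ln\ln\mathbf{h})^{1/2+\varepsilon}\,v\bigr\|_{L^2}\,\|g\|_{L^{2,\infty}}^*,
\]
demands of $g$ only membership in $L^{2,\infty}$, and this is \emph{not} what the paper's Theorem~\ref{th: norm S^2,infty} says. That theorem requires the substantially stronger hypothesis that $g^2$ can be factorized as $g_p\,g_{p'}$ with $g_p\in L^{p,\infty}$ and $g_{p'}\in\ell^{p',\infty}(L^2)$, with quasinorms uniformly bounded for $p$ in a neighborhood of $2^+$. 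The $\ell^{p',\infty}(L^2)$ condition with $p'<2$ encodes a decay-at-infinity condition that $L^{2,\infty}$ alone does not supply; for the specific multiplier $g(\xi)=|\xi|^{-d/2}\mathds{1}_{|\xi|\ge1}$ the paper verifies this factorizability by hand (inequalities \eqref{eq:comput_gp}--\eqref{eq:comput_gp'}). So even the \emph{statement} you rely on is stronger than what is established.

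Your proposed route to that statement also does not close. Decomposing $g$ into dyadic annuli $2^k\le|\xi|<2^{k+1}$ and bounding each piece by Hilbert--Schmidt produces, per annulus, a contribution of size $\|v\|_{L^2}\cdot\|g\mathds{1}_{\text{ann}}\|_{L^2}\sim\|v\|_{L^2}$, not $\|\pi_j v\|_{L^2}$; the Hermite blocks of $v$ are \emph{not} compactly supported in frequency, so you cannot truncate the sum over annuli after $O(\ln\Lambda_j)$ terms, and the series diverges for every fixed $j$. What actually works in the paper is dual to what you sketch: decompose $f$ (not $g$) into harmonic-oscillator spectral blocks $\pi_k f$ at \emph{double-exponential} scales $\Lambda_k=e^{e^k}$, and for each block apply H\"older in weak trace ideals with a $k$-dependent exponent $p_k\to 2^+$, bounding one factor by Cwikel's $p>2$ estimate (Theorem~\ref{thm:Cwikel}) and the other by Simon's $\ell^{p',\infty}(L^2)$ estimate (Theorem~\ref{thm:simon}). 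The logarithm enters through the blowup of the Cwikel and Simon constants as $p_k\to 2$, and the iterated logarithm through the double-exponential spacing; the Sobolev embedding and the inequality $\langle-i\nabla\rangle^{t_p}\le\langle\mathbf{h}\rangle^{t_p/2}$ are what tie the $L^{p_k}$ and $\ell^{p_k'}(L^2)$ norms of $\pi_k f$ back to $\|\pi_k f\|_{L^2}$. Neither the Hermite-window heuristic nor dyadic annuli in $\xi$ appear anywhere in this mechanism.
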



The main ideas of the proof of Theorem \ref{thm:low} are as follows. We first use that
\begin{equation*}
\big\|K^\perp_{E,<1}\big\|_{\mathcal{L}^1}=\int_{|\xi|<1} \big\|\Pi_{\mathcal{F}_{n}}^{\perp}e^{ix\cdot\xi}v(x)\big\|_{L_{x}^{2}}^{2}\frac{\mathrm{d}\xi}{|\xi|^{2s}-E} \le \int_{|\xi|<1} \big\|\Pi_{\mathcal{F}_{n}}^{\perp}e^{ix\cdot\xi}v(x)\big\|_{L_{x}^{2}}^{2}\frac{\mathrm{d}\xi}{|\xi|^{2s}} ,
\end{equation*}
(see Lemma \ref{lem:integral-form-for-trace}). For $s\ge\frac{d}{2}$, $\xi\mapsto |\xi|^{-2s}\mathds{1}_{|\xi|<1}$ is not integrable. We decompose the region $|\xi|<1$ into annuli $e^{-k-1}\le|\xi|< e^{-k}$ for $k\in\mathbb{N}_0$, which we combine with a splitting of $v$ in each annuli, of the form $v=v_k^<+v_k^>$, with $v_k^<(x)=\mathds{1}_{|x|\le e^{k}}v(x)$, $v_k^>(x)=\mathds{1}_{|x|\ge e^{k}}v(x)$. For the terms with $v_k^>$, we can use the decay of $v$ at infinity to `gain' powers of $\xi$ since
\begin{equation*}
\big\||\xi|^{-2s}\mathds{1}_{e^{-k-1}\le|\xi|< e^{-k}} \mathds{1}_{|x|\ge e^{k}}v\big\|_{L^2}\lesssim e^{2ks}\big\|\mathds{1}_{|x|\ge e^{k}}v\big\|_{L^2}\le\big\||x|^{2s}v\big\|_{L^2}.
\end{equation*}
A refined estimate shows that the decay conditions imposed in the right-hand side of \eqref{eq:low-freq} are enough to have summability with respect to $k$. To estimate the terms with $v_k^<$, we use that $\Pi_{\mathcal{F}_{n}}^{\perp}x^\alpha v=0$ for all $|\alpha|\le n$. Expanding the exponential $e^{ix\cdot\xi}$ into a series then allows us again to gain powers of $\xi$ and reach integrability.

In the case where $s>\frac{d}{2}$, the proof of Theorem~\ref{thm:high} is straightforward (using that $\xi\mapsto |\xi|^{-2s}\mathds{1}_{|\xi|>1}$ is integrable). In the critical case where $s=\frac{d}{2}$, Theorem~\ref{thm:high} is a corollary of the following Cwikel-type estimate (Theorem \ref{th: norm S^2,infty}). Before stating it we recall a few notations.

For $1\le p < \infty$, the weak spaces $L^{p,\infty}$ are defined as the sets of all measurable functions $f:\mathbb{R}^d\to\mathbb{C}$ such that the quasinorm
\[\|f\|^*_{L^{p,\infty}}:=\sup_{t>0} \lambda(\{|f|>t\})^{1/p}\,t\]
is finite (here $\lambda$ stands for the Lebesgue measure). 
For $1\le p,q<\infty$, the spaces $\ell ^q(L^p)$ are defined as follows. For any $\mathbf{m}\in\mathbb{Z}^d$, let $\chi_\mathbf{m}$ be the characteristic function of the unit hypercube of $\R^d$ with center $\mathbf{m}$ and, for all function $f:\mathbb{R}^d\to\mathbb{C}$, let $f_\mathbf{m}:=\chi_\mathbf{m}f$. The space $\ell ^q(L^p)$ is the set of measurable functions $f:\mathbb{R}^d\to\mathbb{C}$ such that $(\|f_\mathbf{m}\|_{L^p})_\mathbf{m} \in \ell^q$, equipped with the norm
	\begin{equation}
		\label{def: l^q L^p}
\|f\|_{\ell ^q(L^p)}:=\left(		\sum_{\mathbf{m}\in\mathbb{Z}^d} \|f_\mathbf{m}\|_{L^p}^q \right)^{1/q}  \,.
	\end{equation}
Likewise, $\ell^{p,\infty}(\mathbb{Z}^d)$ are the spaces of families of complex numbers $u=(u_{\mathbf{m}})_{\mathbb{Z}^d}$ such that the quasinorm
\[\|u\|^*_{\ell^{p,\infty}} := \sup_{j\geq0}(j+1)^{1/p}u_j^*\]
is finite, where $(u^*_j)_{j\in\mathbb{N}_0}$ is the sequence of the $|u_{\mathbf{m}}|$ sorted in decreasing order.
The space $\ell^{q,\infty}(L^p)$ is defined analogously to the space $\ell^q (L^p)$ in \eqref{def: l^q L^p}. The Fourier transform on $\mathbb{R}^d$ is denoted by
\begin{align*}
\mathcal{F}(f)(\xi)=\hat{f}(\xi)=(2\pi)^{-\frac{d}{2}}\int_{\mathbb{R}^d}e^{-ix\cdot\xi}f(x)\mathrm{d}x.
\end{align*}
For $f:\mathbb{R}^d\to\mathbb{R}$ a measurable function, $f(-i\nabla)$ denotes the operator defined by $f(-i\nabla)\varphi=\mathcal{F}^{-1}(f\hat{\varphi})$.

%
%
%
%
%

\begin{theorem}[Cwikel-type estimate in $\mathcal{L}^{2, \infty}$]\label{th: norm S^2,infty}
Let $d\ge 1$, $\delta>0$ and $\varepsilon>0$. 
Then
\begin{equation*}
\|f(x) g(-i\nabla)\|_{\mathcal{L}^{2, \infty}}^{*} 
\lesssim_{\,d,\delta,\varepsilon}
\big \| (\ln\mathbf{h})^{\frac12} (\ln\ln\mathbf{h})^{\frac12+\varepsilon} \,f\big\|_{L^2} \sup_{\substack{2<p\leq 2+\delta \\
\frac{1}{p}+\frac{1}{p'}=1}} \inf_{\substack{g_p,g_{p'}\\ g^2=g_pg_{p'}}} \sqrt{ \|g_p\|_{L^{p,\infty}} \|g_{p'}\|_{\ell^{p',\infty}(L^2)}}
\end{equation*}
for any $f$ and $g$ such that the right hand side is finite. 
\end{theorem}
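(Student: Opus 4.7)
The plan is to reduce the critical $\mathcal{L}^{2,\infty}$ Cwikel estimate to a family of super-critical estimates in $\mathcal{L}^{p,\infty}$ for $p$ just above $2$, and to patch these together via a spectral decomposition of $f$ adapted to the harmonic oscillator $\mathbf{h}$. Since $\mathbf{h} \ge e^e$ has compact resolvent with eigenvalues going to $\infty$, I would set
\[ f_k := \mathds{1}_{[e^{k}, e^{k+1})}(\mathbf{h})\,f, \quad k\ge 0, \qquad f = \sum_k f_k \text{ (orthogonal in } L^2\text{)}, \]
so that by the spectral theorem
\[ \big\|(\ln \mathbf{h})^{\frac12}(\ln\ln \mathbf{h})^{\frac12+\varepsilon} f\big\|_{L^2}^2 \;\sim\; \sum_k k\,(\ln k)^{1+2\varepsilon}\,\|f_k\|_{L^2}^2. \]

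The next step would be to establish, as an auxiliary lemma, a super-critical Cwikel-type bound of the form
\[ \|F(x) g(-i\nabla)\|_{\mathcal{L}^{p,\infty}}^* \lesssim_p \|F\|_{L^p}\,\inf_{g^2=g_pg_{p'}}\sqrt{\|g_p\|_{L^{p,\infty}}\,\|g_{p'}\|_{\ell^{p',\infty}(L^2)}}, \quad 2<p\le 2+\delta, \]
along Birman-Solomyak lines (dyadic decomposition of $g$ in momentum, H\"older in Schatten classes, reduction to unit-cube blocks to produce the $\ell^{p',\infty}(L^2)$ norm). Combining it with a Bernstein-type inequality for spectral projectors of $\mathbf{h}$ (Hermite function $L^p$-estimates, giving $\|f_k\|_{L^p} \lesssim e^{kd(\frac12-\frac1p)}\|f_k\|_{L^2}$), one bounds each piece $T_k := f_k(x) g(-i\nabla)$ in $\mathcal{L}^{p,\infty}$. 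The freedom to let $p = p_k$ depend on $k$, chosen for instance as $p_k = 2 + \kappa/k$, keeps the Bernstein factor uniformly bounded while $p_k \downarrow 2$.

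The final step is to aggregate the $T_k$'s into an $\mathcal{L}^{2,\infty}$ bound on $T = \sum_k T_k$ by a weighted singular-value allocation argument: using iteratively $s_{m+n}(A+B) \le s_m(A) + s_n(B)$ and an allocation $(j_k)$ with $\sum_k j_k \le j$ chosen to equalize contributions, one obtains an estimate of the form
\[ \|T\|_{\mathcal{L}^{2,\infty}}^{*\,2} \lesssim \sum_k \omega_k\,\|T_k\|_{\mathcal{L}^{p_k,\infty}}^{*\,2}, \]
with weights $\omega_k$ tunable to $k(\ln k)^{1+2\varepsilon}$ thanks to the convergence of $\sum_k 1/(k(\ln k)^{1+2\varepsilon})$. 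Plugging in the per-piece bound yields the announced estimate, after bounding each $g$-factor uniformly by the supremum over $p$. The main obstacle I anticipate is precisely this aggregation step: because $\mathcal{L}^{p,\infty}$ is only a quasi-Banach space and the exponents $p_k$ vary with $k$, care is required to obtain a clean bound with the correct logarithmic powers, and balancing the Bernstein factor against the weight $\omega_k$ at the optimal rate $p_k \downarrow 2$ will be the technical heart of the proof. The $(\ln\ln \mathbf{h})^{\frac12+\varepsilon}$ factor is exactly the slack needed for this iterated-logarithm series to converge, which strongly suggests this is how it arises.
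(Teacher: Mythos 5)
Your overall intuition is on the right track -- decompose $f$ by spectral windows of $\mathbf{h}$, use a Bernstein inequality to control $\|f_k\|_{L^p}$ by $\|f_k\|_{L^2}$ with $p=p_k\downarrow2$, and let a Bertrand-type series govern the $\ln\ln$ factor -- but there are two genuine gaps that make the proposal break.

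First, the auxiliary lemma is stated in the wrong trace ideal and with the wrong right-hand side. The factorization $g^2=g_pg_{p'}$ is used through H\"older \emph{in weak trace ideals}: one writes $\big(\|Fg\|^*_{\mathcal{L}^{2,\infty}}\big)^2=\|F g_p\,(F g_{p'})^*\|^*_{\mathcal{L}^{1,\infty}}\le \|Fg_p\|^*_{\mathcal{L}^{p,\infty}}\,\|Fg_{p'}\|^*_{\mathcal{L}^{p',\infty}}$, which gives an $\mathcal{L}^{2,\infty}$ (not $\mathcal{L}^{p,\infty}$) bound on $Fg$, and crucially requires \emph{two} different controls on $F$: $\|F\|_{L^p}$ from Cwikel for the $p>2$ factor, and $\|F\|_{\ell^{p'}(L^2)}$ from Simon's theorem for the $p'<2$ factor. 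Because each piece is already bounded in $\mathcal{L}^{2,\infty}$, which admits an equivalent genuine norm, the aggregation is just the triangle inequality. Your proposed weighted singular-value allocation across varying exponents $p_k>2$ is not needed and, as you anticipated, would be very hard to carry out: the pieces $f_k(x)g(-i\nabla)$ are not finite rank nor mutually orthogonal, an $\mathcal{L}^{p_k,\infty}$ bound with $p_k>2$ is genuinely weaker than an $\mathcal{L}^{2,\infty}$ bound, and the factor $(j+1)^{1/2-1/p_k}$ in any allocation grows without bound in $j$ for fixed $k$.

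Second, and independently, the single-exponential spectral scale $[e^k,e^{k+1})$ does not close the final summation. With $\pi_k=\mathds{1}_{[e^k,e^{k+1})}(\mathbf{h})$, the optimal choice of $p_k$ makes the Cwikel--Simon constants blow up like $\ln\mathbf{h}\sim k$, giving $\|\pi_k f\,g\|^*_{\mathcal{L}^{2,\infty}}\lesssim \sqrt{k}\,\|\pi_kf\|_{L^2}$, and the Cauchy--Schwarz step then needs $\sum_k (\ln k)^{-1-2\varepsilon}<\infty$, which diverges. The paper instead uses the \emph{doubly} exponential windows $\Lambda_k=e^{e^k}$, so that the blow-up of the Cwikel--Simon constants is exactly $\ln\Lambda_{k+1}=e^{k+1}$, matching the weight $(\ln\mathbf{h})^{1/2}\sim e^{k/2}$ term-by-term, while $\ln\ln\mathbf{h}\sim k$ supplies the $\sum_k k^{-1-2\varepsilon}<\infty$ needed for the final Cauchy--Schwarz. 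This choice of scale, together with tracking the explicit blow-up rates $(p-2)^{-1/p}(2-p')^{1/p'-1}$ in Cwikel's and Simon's bounds, is the technical heart of the argument, not the aggregation.
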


%
%
%

%
%
\begin{remark}
One can state a slightly stronger estimate, involving the norm of $g$ in a suitably defined vector space, as follows. Given $\mathcal{E}_1$ and $\mathcal{E}_2$ two quasi-normed subspaces of the measurable functions from $\mathbb{R}^d$ to $\mathbb {R}$, endowed with quasi-norms $\|\cdot\|_{\mathcal{E}_1}$ and  $\|\cdot\|_{\mathcal{E}_2}$, consider the vector space
\[\sqrt{\mathcal{E}_1\cdot \mathcal{E}_2} := \Big\{\varphi\mid \exists J\in\mathbb{N},\exists (a,b)\in \mathcal{E}_1^J\times \mathcal{E}_2^J, \varphi^2 \leq \sum_{j=1}^J a_j b_j \Big\}\]
 endowed with the quasi-norm
\[ \|\varphi\|^*_{\sqrt{\mathcal{E}_1\cdot \mathcal{E}_2}} := \inf \Big\{\sqrt{\sum_{j=1}^J \|a_j\|_{\mathcal{E}_1} \|b_j\|_{\mathcal{E}_2}} \, \mid\, J\in\mathbb{N}, (a,b)\in \mathcal{E}_1^J\times \mathcal{E}_2^J, \varphi^2 \leq \sum_{j=1}^J a_j b_j \Big\}\,.\]

Then the following holds: for all $d\ge 1$, $\delta>0$ and $\varepsilon>0$,
\begin{equation*}
\|f(x) g(-i\nabla)\|_{\mathcal{L}^{2, \infty}}^{*} 
\lesssim_{\,d,\delta,\varepsilon}
\big \| (\ln\mathbf{h})^{\frac12} (\ln\ln\mathbf{h})^{\frac12+\varepsilon} \,f\big\|_{L^2} \sup_{\substack{2<p\leq 2+\delta \\
\frac{1}{p}+\frac{1}{p'}=1
}}  \|g\|^*_{\sqrt{L^{p,\infty}\, \cdot \, \ell^{p',\infty}(L^2)}}
\end{equation*}
for any $f$ and $g$ such that the right hand side is finite. 
\end{remark}

Theorem \ref{th: norm S^2,infty} is obtained by first decomposing $f$ as
\begin{equation*}
f = \sum_{k\in\mathbb{N}} \pi_k f, \quad \pi_k:=\mathds{1}_{\Lambda_k\le \mathbf{h}<\Lambda_{k+1}}, \quad \Lambda_k:=e^{e^k},
\end{equation*}
and then using H\"older's inequality in weak trace ideals in each spectral region:
\begin{equation*}
\big(\| (\pi_k f)(x)g(-i\nabla)\|_{\mathcal{L}^{2, \infty}}^{*}\big)^2\le \| (\pi_k f)(x)g_p(-i\nabla)\|_{\mathcal{L}^{p, \infty}}^{*} \| (\pi_k f)(x)g_{p'}(-i\nabla)\|_{\mathcal{L}^{p', \infty}}^{*}.
\end{equation*}
Applying the usual Cwikel estimate \cite[Theorem 4.2]{Cwikel77} and an estimate due to Simon \cite[Theorem 4.6]{Simon76}, we are then able to obtain Theorem \ref{th: norm S^2,infty} by  suitably choosing $p$ (depending on~$k$).

\subsection{Organization of the paper} Apart from Cwikel's estimate just mentioned, our paper is self-contained. It is organized as follows. Sections \ref{sec:low} and \ref{sec:high} are devoted to the proofs of Theorems \ref{thm:low} and \ref{thm:high} respectively. In Section \ref{sec:main_thm}, we combine Theorems \ref{thm:low} and \ref{thm:high} to deduce our main results, Theorem~\ref{th: N_<0} and Theorem \ref{th: N_<0critical}. Appendices \ref{app: Birman-S}, \ref{app:variational} and \ref{app:p'} recall proofs of the Birman-Schwinger principle, the variational principle and Simon's result \cite[Theorem 4.6]{Simon76}, respectively.

\bigskip

\noindent \textbf{Acknowledgements.} This research was funded, in whole or in part, by l’Agence Nationale de la Recherche (ANR), project ANR-22-CE92-0013. For the purpose of open access, the author has applied a CC-BY public copyright licence to any Author Accepted Manuscript (AAM) version arising from this submission. We warmly thank R. Frank and M. Lewin for very useful comments. 


\section{Low-frequencies estimate}\label{sec:low}

In this section we prove Theorem \ref{thm:low}. We will use the following notations. For $t\ge0$, let $\sigma_t:=e^{-t}$. We decompose $v$ into
\begin{equation}\label{eq:defv_t}
	v_t^<(x):=\mathds{1}_{|x|\le e^{t}}v(x)\quad \text{and} \quad v_t^>(x):=\mathds{1}_{|x|\ge e^{t}}v(x).
\end{equation}

Before we turn to the proof of Theorem~\ref{thm:low}, we prove the following easy lemma which gives a convenient formula for the trace of  $\Pi_{\mathcal{F}_n}^\perp K_{E,<1} \Pi_{\mathcal{F}_n}^\perp$ (recall that $K_{E,<1}$ has been defined in \eqref{def: K_0 + K_inf} and $\mathcal{F}_n$ has been defined in \eqref{eq:def_Fn}). Note that taking $B=\mathrm{Id}$ in the next lemma, we obtain the well-known formula for the Hilbert-Schmidt norm of an operator of the form $g(-i\nabla)f(x)$.

\begin{lemma}\label{lem:integral-form-for-trace}
Let $f$, $g$ be two functions in $L^2$ and $B$ be a bounded operator on $L^2$. Then
\begin{equation}
\big\| \bar{g}(-i\nabla) \bar{f}(x) B^* \big\|_{\mathcal{L}^2}^2
=(2\pi)^{-d}\int_{\mathbb{R}^d} |g(\xi)|^2 \| B e^{ix\cdot\xi} f(x) \|_{L^2_x}^2 \mathrm{d}\xi \,.
\end{equation}
\end{lemma}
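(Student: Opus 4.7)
My plan is to prove the identity via a Plancherel-type computation, generalising the familiar formula $\|g(-i\nabla) f(x)\|_{\mathcal{L}^2}^2 = (2\pi)^{-d}\|g\|_{L^2}^2 \|f\|_{L^2}^2$ to accommodate the bounded operator $B^*$. Since Hilbert--Schmidt norms are invariant under taking the adjoint, it is equivalent to compute $\|T^*\|_{\mathcal{L}^2}^2$ for $T^* = B f(x) g(-i\nabla)$.

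I would first fix an orthonormal basis $\{e_n\}_{n\geq 0}$ of $L^2(\mathbb{R}^d)$ whose Fourier transforms $\{\hat e_n\}_{n\geq 0}$ also form an orthonormal basis---the Hermite functions are a natural choice. Writing
\[
g(-i\nabla) e_n = (2\pi)^{-d/2}\int_{\mathbb{R}^d} g(\xi)\,\hat e_n(\xi)\, e^{ix\cdot\xi}\,d\xi
\]
as a Bochner integral in $L^2_x$ and pulling the bounded operator $Bf(x)$ inside yields
\[
T^* e_n = (2\pi)^{-d/2}\int_{\mathbb{R}^d} g(\xi)\,\hat e_n(\xi)\,\eta_\xi\,d\xi,\qquad \eta_\xi := B\bigl(f(x)e^{ix\cdot\xi}\bigr)\in L^2.
\]

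Next, fixing a second orthonormal basis $\{\phi_k\}_{k\geq 0}$ of $L^2(\mathbb{R}^d)$, the quantity $(2\pi)^{d/2}\langle\phi_k, T^*e_n\rangle$ is exactly the $n$-th coefficient of the $L^2_\xi$-function $\xi\mapsto g(\xi)\langle\phi_k,\eta_\xi\rangle$ in the orthonormal basis $\{\hat e_n\}$. Summing $|\langle\phi_k, T^*e_n\rangle|^2$ over $n$ by Parseval applied to $\{\hat e_n\}$, then over $k$ by Parseval applied to $\{\phi_k\}$ and Tonelli (all integrands being non-negative), yields
\[
\|T\|_{\mathcal{L}^2}^2 \;=\; \sum_{n,k}|\langle\phi_k, T^* e_n\rangle|^2 \;=\; (2\pi)^{-d}\int_{\mathbb{R}^d} |g(\xi)|^2\,\|\eta_\xi\|_{L^2_x}^2\,d\xi.
\]
The identity of the lemma then follows from $Bf(x)e^{ix\cdot\xi} = Be^{ix\cdot\xi}f(x)$, since the multiplication operators $f(x)$ and $e^{ix\cdot\xi}$ commute.

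The only technical point to check is the well-posedness of the vector-valued integral representation of $g(-i\nabla)e_n$ and the successive interchanges of integration and summation. With Hermite functions, $\hat e_n\in\mathcal{S}(\mathbb{R}^d)$ so $g\hat e_n\in L^1\cap L^2$, making the Bochner integral unambiguous; the subsequent exchanges all involve non-negative summands and integrands and reduce to Tonelli together with Parseval, so no further care is needed.
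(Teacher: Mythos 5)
Your route is genuinely different from the paper's. You pass to the adjoint $T^*=Bf(x)g(-i\nabla)$, expand $g(-i\nabla)e_n$ as an $L^2$-valued integral against a Fourier-invariant orthonormal basis, and then apply Parseval twice (in $n$ and in $k$) together with Tonelli. The paper works with $T\varphi_j$ directly for a generic orthonormal basis $(\varphi_j)$, applies Plancherel once to obtain $\int|g(\xi)|^2|\mathcal F(\bar f B^*\varphi_j)(\xi)|^2\,\mathrm d\xi$, rewrites the Fourier transform as $(2\pi)^{-d/2}\langle Be^{ix\cdot\xi}f,\varphi_j\rangle$, and sums in $j$ by Parseval. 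Both are Plancherel/Parseval arguments; the paper's version is shorter and requires no vector-valued integration or Hermite functions.

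There is, however, a concrete error in the step you single out as ``the only technical point to check''. The map $\xi\mapsto g(\xi)\hat e_n(\xi)e^{ix\cdot\xi}$ is \emph{not} Bochner integrable as an $L^2_x$-valued function: its $L^2_x$-norm equals $|g(\xi)\hat e_n(\xi)|\cdot\|1\|_{L^2_x}=+\infty$ for a.e.\ $\xi$, since $e^{ix\cdot\xi}\notin L^2_x$. The condition $g\hat e_n\in L^1$ guarantees that the \emph{pointwise} inverse Fourier integral converges for each $x$, but it does not produce a Bochner integral in $L^2_x$. Compounding this, $Bf(x)$ is not a bounded operator on $L^2$ for $f\in L^2$ alone, so ``pulling the bounded operator $Bf(x)$ inside'' is not a legitimate move. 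The fix is straightforward and uses exactly your ingredients: from the pointwise identity $(g(-i\nabla)e_n)(x)=(2\pi)^{-d/2}\int g(\xi)\hat e_n(\xi)e^{ix\cdot\xi}\,\mathrm d\xi$, multiply by $f(x)$ \emph{first}; then $\xi\mapsto g(\xi)\hat e_n(\xi)\,f(\cdot)e^{i\cdot\,\xi}$ has $L^2_x$-norm $|g(\xi)\hat e_n(\xi)|\,\|f\|_{L^2}\in L^1_\xi$, giving a genuine Bochner integral for $f\,g(-i\nabla)e_n$, and only then apply the bounded operator $B$ (which commutes with Bochner integrals). This recovers your formula for $T^*e_n$, after which the double Parseval and Tonelli steps go through unchanged.
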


\begin{proof}
Let $(\varphi_j)_{j\in\mathbb{N}_0}$ be an orthonormal basis of $L^2$. For all $j\in\mathbb{N}_0$, we have
\begin{align}\label{eq:lemmaB_1}
\big\|\bar{g}(-i\nabla)\bar f(x)B^*\varphi_j\big\|_{L^2}^2=\int_{\mathbb{R}^d}|g(\xi)|^2\big|\mathcal{F}(\bar f(x)B^*\varphi_j)(\xi)\big|^2\mathrm{d}\xi.
\end{align}
Now, for all $\xi\in\mathbb{R}^d$, we can rewrite
\begin{align*}
\mathcal{F}(\bar f(x)B^*\varphi_j)(\xi)=(2\pi)^{-\frac{d}{2}}\big\langle e^{ix\cdot\xi} f(x) , B^*\varphi_j \big\rangle_{L^2_x}=(2\pi)^{-\frac{d}{2}} \big\langle Be^{ix\cdot\xi} f(x) , \varphi_j \big\rangle_{L^2_x}.
\end{align*}
Summing \eqref{eq:lemmaB_1} over $j$, we obtain
\begin{align*}
\big\|\bar{g}(-i\nabla)\bar f(x)B^*\big\|_{\mathcal{L}^2}^2=(2\pi)^{-d} \sum_{j\in\mathbb{N}_0} \int_{\mathbb{R}^d}|g(\xi)|^2\big| \big\langle Be^{ix\cdot\xi} f(x) , \varphi_j \big\rangle_{L^2_x} \big|^2\mathrm{d}\xi,
\end{align*}
which implies the statement of the lemma by Parseval's equality.
\end{proof}

%

Now we are ready to prove Theorem~\ref{thm:low}.

\begin{proof}[Proof of Theorem~\ref{thm:low}]
Applying Lemma~\ref{lem:integral-form-for-trace} we can express the trace of $K_{E,<1}^\perp$ as
\[
\big \|\Pi_{\mathcal{F}_{n}}^{\perp}v(x)\big((-\Delta)^{s}-E\big)^{-1}\mathds{1}_{|-i\nabla|<1}v(x)\Pi_{\mathcal{F}_{n}}^{\perp} \big\|_{\mathcal{L}^{1}}=\int_{|\xi|<1} \big\|\Pi_{\mathcal{F}_{n}}^{\perp}e^{ix\cdot\xi}v(x)\big\|_{L_{x}^{2}}^{2}\frac{\mathrm{d}\xi}{|\xi|^{2s}-E}\,.
\]
Using the decompositions $e^{i\theta}=\sum_{j=0}^{n}\frac{(i\theta)^{j}}{j!}+\sum_{j\ge n+1}\frac{(i\theta)^{j}}{j!}$
and $v=v_{k}^{>}+v_{k}^{<}$, we obtain, since $E\le0$,
\[
\int_{|\xi|<1} \big\|\Pi_{\mathcal{F}_{n}}^{\perp}e^{ix\cdot\xi}v(x)\big\|_{L_{x}^{2}}^{2}\frac{\mathrm{d}\xi}{|\xi|^{2s}}\lesssim\sum_{k\in\mathbb{N}_{0}}\int_{\sigma_{k+1}\leq|\xi|<\sigma_{k}}(A_1(k,\xi)+A_2(k,\xi)+B(k,\xi)) \, \frac{\mathrm{d}\xi}{|\xi|^{2s}},
\]
where we have set
\begin{equation*}
A_1(k,\xi)  := \big\|\Pi_{\mathcal{F}_{n}}^{\perp}e^{ix\cdot\xi}v_{k}^{>}(x)\big\|_{L_{x}^{2}}^{2} \,, \quad
A_2(k,\xi)  := \sum_{j=0}^{n}\Big\|\Pi_{\mathcal{F}_{n}}^{\perp} \frac{(ix\cdot\xi)^{j}}{j!}v_{k}^{<}(x)\Big\|_{L_{x}^{2}}^{2},
\end{equation*}
and
\begin{equation*}
B(k,\xi)  := \Big\|\Pi_{\mathcal{F}_{n}}^{\perp}\sum_{j\ge n+1}\frac{(ix\cdot\xi)^{j}}{j!}v_{k}^{<}(x)\Big\|_{L_{x}^{2}}^{2}\,.
\end{equation*}

The estimate of $A_1(k,\xi)$ is straightforward:
\begin{equation*}
A_1(k,\xi) 
\leq \|e^{ix\cdot\xi}v_{k}^{>}(x)\|_{L_{x}^{2}}^{2}
\leq \|v_{k}^{>}\|_{L_{x}^{2}}^{2}
\le\sum_{|\alpha|\leq n} |\xi|^{2|\alpha|} \| x^{\alpha}v_{k}^{>}\|_{L^{2}}^{2} \,.
\end{equation*}
The purpose of the last inequality is only to bound $A_1(k,\xi)$ and $A_2(k,\xi)$ by the same term. To estimate $A_2(k,\xi)$, using $|\xi|\leq1$ and $\Pi_{\mathcal{F}_{n}}^{\perp}x^{\alpha}v=0$ for $|\alpha|\leq n$, we write
\begin{equation*}
A_2(k,\xi) 
 \leq \sum_{|\alpha|\leq n} \xi^{2\alpha} \big\|\Pi_{\mathcal{F}_{n}}^{\perp}x^{\alpha}(v-v_{k}^{>})\big\|_{L^{2}}^{2}
 = \sum_{|\alpha|\leq n} \xi^{2\alpha} \big\|\Pi_{\mathcal{F}_{n}}^{\perp}x^{\alpha}v_{k}^{>}\big\|_{L^{2}}^{2}
 \leq \sum_{|\alpha|\leq n} |\xi|^{2|\alpha|} \| x^{\alpha}v_{k}^{>}\|_{L^{2}}^{2} \,.
\end{equation*}
Integrating over $\xi$ and summing over $k$ yields
\begin{align*}
\sum_{k\in\mathbb{N}_{0}}\int_{\sigma_{k+1}\leq|\xi|<\sigma_{k}}(A_1(k,\xi)+A_2(k,\xi))\,\frac{\mathrm{d}\xi}{|\xi|^{2s}} 
& \lesssim\sum_{\substack{k\in\mathbb{N}_{0}\\ |\alpha|\leq n}} \int_{\sigma_{k+1}\leq|\xi|<\sigma_{k}} \frac{\mathrm{d}\xi}{ |\xi|^{2s-2|\alpha|}} \| x^\alpha v_{k}^{>}\|_{L^{2}}^{2}\\
 & \lesssim_{\,d,s} \sum_{\substack{k\in\mathbb{N}_{0}\\ |\alpha|\leq n}} \sigma_{k}^{d-2s+2|\alpha|}\| x^\alpha v_{k}^{>}\|_{L^{2}}^{2}.
\end{align*}
To bound this sum by an integral we isolate the term for $k=0$, shift the indexes for $k\geq 1$ and use that $\sigma_{k+1} = e^{-1} \sigma_k$ to obtain
\begin{equation*}
\sum_{k\in\mathbb{N}_{0}}\int_{\sigma_{k+1}\leq|\xi|<\sigma_{k}}(A_1(k,\xi)+A_2(k,\xi))\,\frac{\mathrm{d}\xi}{|\xi|^{2s}} \\
\lesssim_{\,d,s} \|\langle x\rangle^n v\|_{L^2}^2+\sum_{\substack{k\in\mathbb{N}_{0}\\ |\alpha|\leq n}} \sigma_{k}^{d-2s+2|\alpha|}\| x^\alpha v_{k+1}^>\|_{L^2}^2 .
\end{equation*}
Now, since $k\mapsto\sigma_k^{d-2s+2|\alpha|}$ is non-decreasing (given that $s-\frac{d}{2}\geq n\geq |\alpha|$) and $k\mapsto \| x^\alpha v_k^>\|_{L^2}^2$ is decreasing, we can estimate
\begin{multline*}
	\sum_{k\in\mathbb{N}_0}\sigma_{k}^{d-2s+2|\alpha|}\| x^\alpha v_{k+1}^>\|_{L^2}^2
	=\sum_{k\in\mathbb{N}_{0}} \int_k^{k+1}\sigma_{k}^{d-2s+2|\alpha|}\| x^\alpha v_{k+1}^>\|_{L^2}^2\mathrm{d}t\\
	\le \sum_{k\in\mathbb{N}_0}\int_k^{k+1}\sigma_{t}^{d-2s+2|\alpha|}\| x^\alpha v_{t}^>\|_{L^2}^2 \mathrm{d}t
		= \int_0^{\infty} \sigma_{t}^{d-2s+2|\alpha|} \int_{|x|\ge e^t}| x^\alpha v(x)|^2\mathrm{d}x\mathrm{d}t .
\end{multline*}
	By Fubini's Theorem, this gives 
\begin{multline}
\sum_{k\in\mathbb{N}_{0}}\int_{\sigma_{k+1}\leq|\xi|<\sigma_{k}}(A_1(k,\xi) + A_2(k,\xi)) \frac{\mathrm{d}\xi}{|\xi|^{2s}} \\
\lesssim_{\,d,s} \|\langle x\rangle^{n}v\|_{L^2}^2+\sum_{|\alpha|\leq n} \int_{|x|\ge1} |x^\alpha v(x)|^2\int_0^{\ln|x|}\sigma_{t}^{d-2s+2|\alpha|}\,\mathrm{d}t\,\mathrm{d}x   \\
	\lesssim_{\,d,s}
	\begin{cases}
		\| \langle x\rangle^{s-\frac{d}{2}}\sqrt{1+\ln\langle x\rangle} v\|_{L^2}^2  & \textnormal{ if } s-\frac{d}{2}\in\mathbb{N}_0,\\
		\| \langle x\rangle^{s-\frac{d}{2}} v\|_{L^2}^2 & \textnormal{ if } s-\frac{d}{2}\notin\mathbb{N}_0,
	\end{cases}
\label{eq:bound-A}
\end{multline}
where we used that $\sigma_t=e^{-t}$ in the last inequality.

It remains to estimate $B(k,\xi)$. We write
\begin{equation*}
B(k,\xi)
 \leq\Big\|\sum_{j\ge n+1}\frac{(ix\cdot\xi)^{j}}{j!}v_{k}^{<}(x)\Big\|_{L_{x}^{2}}^{2}
  \lesssim_{\,d}\sum_{|\alpha|=n+1}\xi^{2\alpha}\|x^{\alpha}v_{k}^{<}\|_{L^{2}}^{2}
  \lesssim_{\,d} |\xi|^{2n+2}\big\||x|^{n+1}v_{k}^{<}\big\|_{L^{2}}^{2} \,.
\end{equation*}
Integrating over $\xi$ and summing over $k$ yields
\begin{align*}
\sum_{k\in\mathbb{N}_{0}}\int_{\sigma_{k+1}\leq|\xi|<\sigma_{k}}B(k,\xi)\frac{\mathrm{d}\xi}{|\xi|^{2s}} &
\lesssim_{\,d} \sum_{k\in\mathbb{N}_{0}}\int_{\sigma_{k+1}\leq |\xi|<\sigma_{k}}|\xi|^{2n+2-2s}\mathrm{d}\xi\: \big\||x|^{n+1}v_{k}^{<}\big\|_{L^{2}}^{2}\\
 & \lesssim_{\,d} \sum_{k\in\mathbb{N}_{0}}\sigma_{k}^{d-2s+2n+2}\:\big\||x|^{n+1}v_{k}^{<}\big\|_{L^{2}}^{2} \,.
\end{align*}
Since $k\mapsto\sigma_{k}^{d-2s+2n+2}$ is decreasing (as $d-2s+2n+2>0$)
and $t\mapsto\||x|^{n+1}v_{t}^{<}\|_{L^{2}}^{2}$ is increasing, we can estimate
\begin{align*}
\sum_{k\in\mathbb{N}_{0}}\int_{\sigma_{k+1}\leq|\xi|<\sigma_{k}}B(k,\xi)\frac{\mathrm{d}\xi}{|\xi|^{2s}}  
& \lesssim_{\,d} \sum_{k\in\mathbb{N}_{0}}\int_{k}^{k+1}\sigma_{k+1}^{d-2s+2n+2}\:\big\||x|^{n+1}v_{k}^{<}\big\|_{L^{2}}^{2}\mathrm{d}t\\
 & \lesssim_{\,d}\sum_{k\in\mathbb{N}_{0}}\int_{k}^{k+1}\sigma_{t}^{d-2s+2n+2}\:\big\||x|^{n+1}v_{t}^{<}\big\|_{L^{2}}^{2}\mathrm{d}t\\
 & \lesssim_{\,d}\int_{0}^{\infty}\sigma_{t}^{d-2s+2n+2}\:\int_{|x|<e^{t}}|x|^{2n+2}|v(x)|^{2}\,\mathrm{d}x\,\mathrm{d}t.
\end{align*}
An application of Fubini's theorem yields, as $\sigma_t=e^{-t}$,
\begin{align*}
\sum_{k\in\mathbb{N}_{0}}\int_{\sigma_{k+1}\leq|\xi|<\sigma_{k}}B(k,\xi)\frac{\mathrm{d}\xi}{|\xi|^{2s}} 
 & \lesssim_{\,d}\int_{\mathbb{R}^{d}}\int_{\ln|x|}^{\infty}e^{-t(d-2s+2n+2)}\mathrm{d}t\,|x|^{2n+2}|v(x)|^{2}\,\mathrm{d}x\,\\
 & \lesssim_{\,d,s}\int_{\mathbb{R}^{d}}|x|^{-(d-2s+2n+2)}\,|x|^{2n+2}|v(x)|^{2}\,\mathrm{d}x\,\\
 & \lesssim_{\,d,s}\int_{\mathbb{R}^{d}}|x|^{2s-d}|v(x)|^{2}\,\mathrm{d}x\,,
\end{align*}
and therefore
\begin{equation}\label{eq:bound-B}
\sum_{k\in\mathbb{N}_{0}}\int_{\sigma_{k+1}\leq|\xi|<\sigma_{k}}B(k,\xi)\frac{\mathrm{d}\xi}{|\xi|^{2s}}\lesssim_{\,d,s}\big\|\langle x\rangle^{s-\frac{d}{2}}v\big\|_{L^{2}}^{2}\,.
\end{equation}
Putting together \eqref{eq:bound-A} and \eqref{eq:bound-B} we obtain the statement of Theorem~\ref{thm:low}.
\end{proof}

\section{High-frequencies estimate}\label{sec:high}

This section is devoted to the proof of the Cwikel-type estimate in $\mathcal{L}^{2,\infty}$ given in Theorem~\ref{th: norm S^2,infty}, as well as its consequence stated in Theorem~\ref{thm:high}. Before giving the proof of Theorem~\ref{th: norm S^2,infty}, we show that it indeed implies Theorem~\ref{thm:high}. 

\begin{proof}[Proof of Theorem~\ref{thm:high} using Theorem~\ref{th: norm S^2,infty}]
Recall that $K_{E,>1}$ has been defined in \eqref{def: K_0 + K_inf2}. As $x\mapsto 1/x$ is operator monotone, we have
\begin{equation}
	\label{eq: K_E>1}
\|K_{E,>1}\|_{\mathcal{L}^{1,\infty}}^* 
\le \|v(x)\, (-\Delta)^{-s}  \mathds{1}_{|-i\nabla|>1}\,v(x)\|_{\mathcal{L}^{1,\infty}}^* ,
\end{equation}
for all $E\le0$, the operator $(-\Delta)^{-s}  \mathds{1}_{|-i\nabla|>1}$ being bounded.

For $s>\frac{d}{2}$, the map $\xi\mapsto |\xi|^{-s}\mathds{1}_{|\xi|\ge1}$ belongs to $L^2$. Hence the statement of Theorem \ref{thm:high} is straightforward since the trace norm dominates the $\|\cdot\|_{\mathcal{L}^{1,\infty}}^*$-norm and 
	\begin{align*}
		\big\|v(x) (-\Delta)^{-s} \mathds{1}_{|-i\nabla|>1}v(x)\big\|_{\mathcal{L}^{1}} &=  \big\| (-\Delta)^{-\frac{s}{2}} \mathds{1}_{|-i\nabla|>1}v(x)\big\|^2_{\mathcal{L}^{2}} \\
		&=\big\||\xi|^{-s}\mathds{1}_{|\xi|\ge1}\big\|^2_{L^2}\|v\|^2_{L^2}=C_s\|v\|^2_{L^2}.
\end{align*}

For $s=\frac{d}{2}$,
 writing $v(x) (-\Delta)^{-d/2} \mathds{1}_{|-i\nabla|>1}v(x)= AA^*$ with $A=  v(x) (-\Delta)^{-d/4} \mathds{1}_{|-i\nabla|>1}$, together with the relation $ \|A^* A\|_{\mathcal{L}^{1,\infty}}^*=(\|A\|_{\mathcal{L}^{2,\infty}}^{*})^2$,
yields
	\begin{align}
		\big\|v(x) (-\Delta)^{-d/2} \mathds{1}_{|-i\nabla|>1}v(x)\big\|_{\mathcal{L}^{1,\infty}}^*  = \big(\big\|v(x) (-\Delta)^{-d/4} \mathds{1}_{|-i\nabla|>1} \big\|_{\mathcal{L}^{2,\infty}}^{*}\big)^2.\label{eq:a_infty}
\end{align}
Now we apply Theorem~\ref{th: norm S^2,infty} with $f(x)=v(x)$, $g(\xi) = \frac{1}{|\xi |^{d/2}}\mathds{1}_{|\xi|\ge1}$. Setting 
\begin{equation*}
g_p(\xi):=|\xi|^{-d/p}\mathds{1}_{|\xi|\ge1},
\end{equation*}
we have $g^2=g_pg_{p'}$ for any $p\ge2$ and $\frac{1}{p'}=1-\frac{1}{p}$. We claim that the quasinorms
	\begin{equation*}
		\|g_p\|_{L^{p,\infty}}^*, \quad \|g_{p'}\|_{\ell^{p',\infty}(L^2)}^*
	\end{equation*}
	are uniformly bounded with respect to $p\geq 2$. Indeed, an easy computation shows that 
	\begin{align}\label{eq:comput_gp}
		\|g_p\|_{L^{p,\infty}}^* =& \sup_{t>0} t \lambda(\{1\leq |\xi| \leq t^{-p/d}\})^{1/p}\lesssim_{\,d} 1.
	\end{align}
 Similarly, for $g_{p'}$,
\begin{align}
	\|g_{p'}\|_{\ell^{p',\infty}(L^2)}^* =& \|\, \|\chi_{\mathbf{m}} g_{p'}\|_{L^2}\|_{\ell^{p',\infty}}^* \lesssim \| \langle \mathbf{m}\rangle^{-d/p'} \|_{\ell^{p',\infty}}^* = \sup_{j \geq 0 } (j+1)^{1/p'} (\langle \mathbf{m}\rangle^{-d/p'})_j^*\notag	\\
	\lesssim_{\,d} &  \sup_{j\geq 1}(j+1)^{1/p'} j^{-1/p'} \lesssim_{\,d} 2. \label{eq:comput_gp'}
\end{align}
Hence we can apply Theorem~\ref{th: norm S^2,infty} with
\begin{equation*}
\sup_{\substack{2<p\leq 2+\delta \\
\frac{1}{p}+\frac{1}{p'}=1}} \inf_{\substack{g_p,g_{p'}\\ g^2=g_pg_{p'}}} \sqrt{ \|g_p\|_{L^{p,\infty}} \|g_{p'}\|_{\ell^{p',\infty}(L^2)}} \ \lesssim_{\,d} 1,
\end{equation*}
for any $\delta>0$. This concludes the proof of Theorem~\ref{thm:high}.
\end{proof}

Now we turn to the proof of Theorem~\ref{th: norm S^2,infty}. It is based on the following results. 
\begin{theorem}[Cwikel \cite{Cwikel77}]\label{thm:Cwikel}
Let $d\ge1$. Then
\begin{equation*}
\| f(x)g(-i\nabla) \|_{\mathcal{L}^{p,\infty}}^* \lesssim_{\,d} (p-2)^{-\frac{1}{p}}\|f\|_{L^p} \|g\|_{L^{p,\infty}}^*
\end{equation*}
for all $p\in(2,\infty)$, $f\in L^p(\mathbb{R}^d)$ and $g\in L^{p,\infty}(\mathbb{R}^d)$.
\end{theorem}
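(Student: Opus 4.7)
The plan is to derive this classical Cwikel estimate from two endpoint bounds on the bilinear map $\Phi:(f,g)\mapsto f(x)g(-i\nabla)$ combined with a Marcinkiewicz-type decomposition argument. The first endpoint is the Hilbert--Schmidt identity furnished by Lemma~\ref{lem:integral-form-for-trace} with $B=\mathrm{Id}$, namely
\begin{equation*}
\|f(x)g(-i\nabla)\|_{\mathcal{L}^2}^2=(2\pi)^{-d}\|f\|_{L^2}^2\|g\|_{L^2}^2,
\end{equation*}
and the second is the trivial operator-norm bound $\|f(x)g(-i\nabla)\|_{\mathcal{L}^\infty}\le\|f\|_{L^\infty}\|g\|_{L^\infty}$. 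Complex interpolation between these two already yields the diagonal strong-type Schatten bound $\|f(x)g(-i\nabla)\|_{\mathcal{L}^q}\lesssim_d\|f\|_{L^q}\|g\|_{L^q}$ for every $q\in[2,\infty]$, which will serve as the workhorse estimate.

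To upgrade this to the claimed weak bound under only $g\in L^{p,\infty}$, I would decompose both $f$ and $g$ at thresholds $\mu,\lambda>0$ as $f=f^{>\mu}+f^{\le\mu}$ and $g=g^{>\lambda}+g^{\le\lambda}$, where $g^{>\lambda}:=g\mathds{1}_{\{|g|>\lambda\}}$ etc. A layer-cake computation, applied together with Chebyshev's inequality for $f\in L^p$ and with the weak-$L^p$ hypothesis for $g$, yields for every $q\in[2,p)$
\begin{equation*}
\|f^{>\mu}\|_{L^q}\lesssim(p-q)^{-1/q}\|f\|_{L^p}^{p/q}\mu^{1-p/q},\qquad \|g^{>\lambda}\|_{L^q}\lesssim(p-q)^{-1/q}(\|g\|_{L^{p,\infty}}^*)^{p/q}\lambda^{1-p/q},
\end{equation*}
together with $\|f^{\le\mu}\|_{L^\infty}\le\mu$ and $\|g^{\le\lambda}\|_{L^\infty}\le\lambda$. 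Expanding $T=f(x)g(-i\nabla)$ into the resulting four pieces, the ``large\,$\times$\,large'' piece $f^{>\mu}(x)g^{>\lambda}(-i\nabla)$ is controlled in $\mathcal{L}^q$ by the diagonal Schatten bound, the ``small\,$\times$\,small'' piece $f^{\le\mu}(x)g^{\le\lambda}(-i\nabla)$ is controlled in operator norm by $\mu\lambda$, and the two mixed pieces are controlled by phase-space localization on unit cubes. Combining the subadditivity $s_n(A+B)\le\|A\|+s_n(B)$ with Chebyshev's trace-ideal inequality $n^{1/q}s_n(B)\le\|B\|_{\mathcal{L}^q}$, and optimizing the thresholds $\mu\lambda\sim n^{-1/p}$ and the auxiliary exponent $q\nearrow p$, should produce the weak bound $s_n(T)\lesssim n^{-1/p}$.

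The main technical obstacle is the $(p-2)^{-1/p}$ dependence of the constant, which is the signature of a Marcinkiewicz-type endpoint and blows up as $p\to 2^+$: it arises through a delicate interplay between the $(p-q)^{-1/q}$ prefactors in the layer-cake estimates, the optimization of the thresholds, and the choice of $q<p$. A further difficulty lies in the two mixed pieces, where neither endpoint applies directly since one factor is bounded while the other is only in $L^q$; resolving this via phase-space tiling in position and frequency (cf.\ Cwikel's original argument \cite{Cwikel77}) and carefully tracking all constants is what ultimately produces exactly the stated $(p-2)^{-1/p}$ behavior rather than a logarithmically worse bound.
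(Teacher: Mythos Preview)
The paper does not prove this statement at all: it is quoted as a black box, with the remark that it ``is a direct consequence of \cite[Theorem 4.2]{Cwikel77}'', and the introduction explicitly says ``Apart from Cwikel's estimate just mentioned, our paper is self-contained.'' So there is no in-paper argument to compare your sketch against; you are effectively trying to reproduce Cwikel's original proof.

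Your overall architecture---two strong endpoints (the Hilbert--Schmidt identity at $q=2$ and the operator-norm bound at $q=\infty$), the diagonal Schatten bound for all $q\in[2,\infty]$, then a real-interpolation/Marcinkiewicz step to reach $L^{p,\infty}$ in the second slot---is exactly the right framework, and it is the one Cwikel uses. However, your four-way split has a genuine gap at the two mixed pieces, and the gap is not merely cosmetic. Take $f^{>\mu}(x)g^{\le\lambda}(-i\nabla)$: the factor $f^{>\mu}$ lies in $L^q$ only for $q\le p$ (it has finite-measure support but may be unbounded), while $g^{\le\lambda}$ lies in $L^q$ only for $q>p$ (it is bounded but may carry a non-integrable tail, e.g.\ $|\xi|^{-d/p}$ at infinity). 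There is therefore \emph{no} exponent $q\in[2,\infty]$ at which your diagonal workhorse $\|\cdot\|_{\mathcal{L}^q}\lesssim_d\|\cdot\|_{L^q}\|\cdot\|_{L^q}$ applies to this piece. Your proposed fix, ``phase-space localization on unit cubes'', is not what Cwikel does and, as written, does not explain how the cross term is controlled; the honest acknowledgement in your last paragraph does not close the argument.

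Cwikel's actual route avoids this obstruction by organizing the decomposition differently: one performs a \emph{dyadic} decomposition of $g$ (not a single threshold cut) into level sets $g_n=g\mathds{1}_{\{2^n<|g|\le 2^{n+1}\}}$, each of which lies in \emph{every} $L^q$, and then balances the $\mathcal{L}^2$- and $\mathcal{L}^\infty$-type information across the dyadic scale rather than across a single two-by-two product. The $(p-2)^{-1/p}$ constant emerges from summing the resulting geometric series in $n$ at the borderline exponent. If you want to push your own scheme through, the cleanest repair is to abandon the simultaneous single-threshold cut of both $f$ and $g$ and instead run the Marcinkiewicz step on $g$ alone after first reducing to the diagonal strong bound at two exponents $q_0,q_1$ with $2\le q_0<p<q_1$; this requires an accompanying split of $f$ matched to $q_0$ and $q_1$ separately, not a single $\mu$.
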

\begin{lemma}[Sobolev embedding]\label{lm:sobolev}
Let $d\ge1$, $\delta>0$. Then
\begin{equation*}
\|f\|_{L^p}\lesssim_{\,d,\delta}\|f\|_{H^t}
\end{equation*}
for all $p\in[2,2+\delta]$, $t\ge d(\frac12-\frac1p)$ and $f\in H^t$.
\end{lemma}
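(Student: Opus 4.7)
The lemma is a quantitative Sobolev embedding, and the only substantive content beyond the classical statement is the uniformity of the constant over a compact range of exponents. The plan is as follows.

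First, by the monotonicity $\|f\|_{H^{t_0}}\le\|f\|_{H^t}$ whenever $t_0\le t$, it is enough to handle the critical case $t=t_0(p):=d(\tfrac{1}{2}-\tfrac{1}{p})$. The case $p=2$ then corresponds to $t_0=0$ and reduces to the trivial identity $\|f\|_{L^2}=\|f\|_{H^0}$.

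For $p\in(2,2+\delta]$, the claim reduces to the classical critical Sobolev embedding $H^{t_0(p)}(\mathbb{R}^d)\hookrightarrow L^p(\mathbb{R}^d)$. I would prove it by writing $f=G_{t_0}*g$ with $g:=(1-\Delta)^{t_0/2}f$, which satisfies $\|g\|_{L^2}=\|f\|_{H^{t_0}}$, and where $G_{t_0}$ denotes the Bessel potential kernel. The kernel $G_{t_0}$ decomposes naturally into a singular part near the origin, behaving like $|x|^{t_0-d}$, and an exponentially decaying tail. The singular part is handled by the Hardy--Littlewood--Sobolev inequality (whose exponents balance exactly at $\tfrac{1}{p}=\tfrac{1}{2}-\tfrac{t_0}{d}$), and the tail is handled by Young's inequality.

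The only remaining point is uniformity of the constant in $p$. Since $p$ ranges over the compact interval $[2,2+\delta]$, the corresponding critical regularities $t_0(p)$ range over the compact interval $[0,\tfrac{d\delta}{2(2+\delta)}]$, which sits strictly below the degenerate value $d/2$. Both the Hardy--Littlewood--Sobolev constant and the Young constant depend continuously on their exponents in this regime and are therefore uniformly bounded. This is precisely the role played by the parameter $\delta$ in the statement, and the main (and in fact only) obstacle, namely the $p\to\infty$ limit of the Sobolev embedding, is excluded by the a priori bound $p\le 2+\delta$.
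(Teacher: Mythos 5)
The paper does not prove this lemma: it merely states that ``Lemma~\ref{lm:sobolev} is the usual Sobolev embedding'' and moves on, so there is no proof in the paper to compare yours against. Your reduction to the critical exponent $t=t_0(p)$ and the Bessel-potential decomposition are a perfectly reasonable route, but the uniformity argument at the end contains a genuine gap.

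Specifically, the claim that ``the Hardy--Littlewood--Sobolev constant depends continuously on its exponents in this regime and is therefore uniformly bounded'' is false. For the operator $g\mapsto |x|^{-\lambda}\ast g$ from $L^2$ to $L^p$ with $\lambda=d-t_0(p)$, the constant diverges as $p\to 2^+$: the Fourier multiplier of $|x|^{-\lambda}$ is proportional to $\Gamma\big(\tfrac{d-\lambda}{2}\big)\,|\xi|^{\lambda-d}$, and $\Gamma\big(\tfrac{d-\lambda}{2}\big)=\Gamma\big(\tfrac{t_0}{2}\big)\sim 2/t_0\to\infty$ as $t_0\to 0^+$. (Testing on a fixed Gaussian $g$ shows the ratio $\||x|^{-\lambda}\ast g\|_{L^p}/\|g\|_{L^2}$ indeed blows up like $1/t_0$.) What actually makes the decomposition work is a cancellation that your write-up does not track: the singular part of the Bessel kernel is $G_{t_0}(x)\sim \gamma(t_0)^{-1}|x|^{t_0-d}$ near the origin with $\gamma(t_0)=\pi^{d/2}2^{t_0}\Gamma(t_0/2)/\Gamma((d-t_0)/2)$, so its coefficient $\gamma(t_0)^{-1}\sim c\,t_0\to 0$, exactly offsetting the divergence in the HLS constant. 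To make the argument correct you must either make this cancellation explicit, or bypass it by invoking directly that the sharp homogeneous Sobolev constant $S(d,t_0)$ in $\|f\|_{L^p}\le S(d,t_0)\,\|(-\Delta)^{t_0/2}f\|_{L^2}$ stays bounded (in fact $S(d,t_0)\to 1$) as $t_0\to 0^+$, and then use $\|(-\Delta)^{t_0/2}f\|_{L^2}\le\|f\|_{H^{t_0}}$.
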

\begin{theorem}[Simon \cite{Simon05}]\label{thm:simon}
Let $d\ge1$, $0<\delta'<1$. Then
\begin{equation*}
\| f(x)g(-i\nabla) \|_{\mathcal{L}^{p',\infty}}^* \lesssim_{\,d,\delta'} (2-p')^{\frac{1}{p'}-1}\|f\|_{L^{p'}} \|g\|_{\ell^{p',\infty}(L^2)}^*
\end{equation*}
for all $p'\in[2-\delta',2)$, $f\in L^{p'}(\mathbb{R}^d)$ and $g\in \ell^{p',\infty}(L^2(\mathbb{R}^d))$.
\end{theorem}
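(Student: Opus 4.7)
The plan is to derive Simon's estimate by combining a Fourier-space block decomposition of $g$ with real interpolation between the Hilbert--Schmidt endpoint at $p' = 2$ and a strong-type $\mathcal{L}^{p_0}$ companion bound at some fixed $p_0 \in (0, 2-\delta')$.

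First I would establish the companion strong-type inequality
\[
\bigl\|f(x)\,g(-i\nabla)\bigr\|_{\mathcal{L}^{p_0}} \lesssim_{\,d,\delta'} \|f\|_{L^{p_0}}\,\|g\|_{\ell^{p_0}(L^2)}\,.
\]
To this end, decompose $g = \sum_{\mathbf{m}\in\mathbb{Z}^d} g_{\mathbf{m}}$ with $g_{\mathbf{m}} := \chi_{\mathbf{m}} g$, and write $T := f(x)g(-i\nabla) = \sum_{\mathbf{m}} T_{\mathbf{m}}$ with $T_{\mathbf{m}} := f(x)\,g_{\mathbf{m}}(-i\nabla)$. Two ingredients drive the argument: (i) by Lemma~\ref{lem:integral-form-for-trace} with $B = \mathrm{Id}$, each block is Hilbert--Schmidt with $\|T_{\mathbf{m}}\|_{\mathcal{L}^2}^2 = (2\pi)^{-d}\|f\|_{L^2}^2\,\|g_{\mathbf{m}}\|_{L^2}^2$; (ii) the $\chi_{\mathbf{m}}$ have pairwise disjoint supports, so $g_{\mathbf{m}}(-i\nabla)\,\overline{g_{\mathbf{m}'}}(-i\nabla) = 0$ for $\mathbf{m}\neq\mathbf{m}'$, giving the block identity $TT^* = \sum_{\mathbf{m}} T_{\mathbf{m}} T_{\mathbf{m}}^*$. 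This co-range orthogonality, combined with classical singular-value inequalities for sums of positive operators and a rearrangement of $(\|g_{\mathbf{m}}\|_{L^2})_{\mathbf{m}}$ in decreasing order, yields the desired strong $\mathcal{L}^{p_0}$ bound.

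At the opposite endpoint $p' = 2$, one has the Hilbert--Schmidt identity $\|f(x)g(-i\nabla)\|_{\mathcal{L}^2} = (2\pi)^{-d/2}\,\|f\|_{L^2}\,\|g\|_{L^2}$ (again via Lemma~\ref{lem:integral-form-for-trace} with $B=\mathrm{Id}$), which provides a bounded bilinear map $L^2 \times L^2 \to \mathcal{L}^2$. Applying real interpolation to the bilinear map $(f, g) \mapsto f(x)\,g(-i\nabla)$ between the two endpoints, with the standard identifications
\[
(L^{p_0}, L^2)_{\theta, p'} = L^{p'}, \quad (\ell^{p_0}(L^2), L^2)_{\theta, \infty} = \ell^{p',\infty}(L^2), \quad (\mathcal{L}^{p_0}, \mathcal{L}^2)_{\theta, \infty} = \mathcal{L}^{p',\infty},
\]
with $\theta \in (0,1)$ determined by $1/p' = (1-\theta)/p_0 + \theta/2$, then produces the stated weak-type bound.

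The main obstacle is keeping precise track of the interpolation constant. Since $1-\theta \sim 2-p'$ as $p' \to 2^-$, the bilinear K-method introduces a factor scaling like $(1-\theta)^{-(1-1/p')}$, which matches $(2-p')^{1/p'-1}$ up to a constant depending only on $d$ and $\delta'$; one must verify that the asymmetric choice of interpolation exponents (the $p'$-norm on the $f$-input, weak $\infty$-norm on the $g$-input and on the target) produces exactly this power with no extraneous logarithmic factor. This careful bookkeeping, together with the uniform control of the constant in the strong-type $\mathcal{L}^{p_0}$ bound for $p_0$ away from $2$, is the technical crux of the proof and will be carried out in Appendix~\ref{app:p'}.
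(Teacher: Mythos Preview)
Your interpolation route is different from the paper's direct argument, but as written it has genuine gaps. The most concrete one is in your $\mathcal{L}^{p_0}$ endpoint: the co-range orthogonality $TT^{*}=\sum_{\mathbf{m}} T_{\mathbf{m}}T_{\mathbf{m}}^{*}$ together with Rotfel'd-type inequalities gives only $\|T\|_{\mathcal{L}^{p_0}}^{p_0}\le\sum_{\mathbf{m}}\|T_{\mathbf{m}}\|_{\mathcal{L}^{p_0}}^{p_0}$, and you have no control on $\|T_{\mathbf{m}}\|_{\mathcal{L}^{p_0}}$ for $p_0<2$ --- you only know $\|T_{\mathbf{m}}\|_{\mathcal{L}^{2}}$, and $\mathcal{L}^{2}\not\subset\mathcal{L}^{p_0}$. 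The actual Birman--Solomyak estimate (\cite[Theorem~4.5]{Simon05}) requires decomposing \emph{both} $f$ and $g$ into unit cubes and a nontrivial combinatorial argument; the norm on $f$ that emerges is $\|f\|_{\ell^{p_0}(L^2)}$, not $\|f\|_{L^{p_0}}$. Second, the bilinear real interpolation you invoke, with second exponents $(p',\infty)$ on the inputs and $\infty$ on the output, is not the standard Lions--Peetre statement and needs independent justification; even granting it, extracting the precise blow-up $(2-p')^{1/p'-1}$ from the interpolation constant is exactly the hard part you have deferred rather than established.

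The paper instead follows Simon's original hands-on argument: decompose $f$ and $g$ into dyadic level sets of the cube norms $a_{\mathbf{m}}=\|f_{\mathbf{m}}\|_{L^2}$ and $b_{\mathbf{m}}=\|g_{\mathbf{m}}\|_{L^2}$, split $f(x)g(-i\nabla)=A_n+B_n$ according to whether $l+k\le n$ or $l+k>n$, bound $\|A_n\|_{\mathcal{L}^{2}}$ and $\|B_n\|_{\mathcal{L}^{1}}$ directly (the latter via \cite[Theorem~4.5]{Simon05}), then apply Fan's inequality $\mu_m(T)\le\mu_{\lceil m/2\rceil}(A_n)+\mu_{\lceil m/2\rceil}(B_n)$ and optimize in $n$. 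This is essentially interpolation carried out explicitly, and the factor $(1-2^{p'-2})^{1/p'-1}\sim(2-p')^{1/p'-1}$ drops out of the geometric sums without any abstract machinery.
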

\begin{lemma}[Embedding of $L^2(\langle x\rangle^{2r}\mathrm{d}x)$ into $\ell^{p'}(L^2)$]\label{lm:embedding}
Let $d\ge1$. Then
\begin{equation*}
\|f\|_{\ell^{p'}(L^2)}\lesssim_{\,d}\|\langle x\rangle^rf\|_{L^2}.
\end{equation*}
for all $1\le p'<2$, $r>d(\frac{1}{p'}-\frac{1}{2})$ and $f\in L^2(\langle x\rangle^{2r}\mathrm{d}x)$.
\end{lemma}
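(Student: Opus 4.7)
The plan is to prove Lemma~\ref{lm:embedding} via a standard weighted Hölder inequality on $\mathbb{Z}^d$. The key geometric observation is that on the support of $\chi_\mathbf{m}$, which is the unit hypercube centered at $\mathbf{m}\in\mathbb{Z}^d$, the weight $\langle x\rangle$ is equivalent, up to a multiplicative constant depending only on $d$, to $\langle\mathbf{m}\rangle$ (since for any such $x$ one has $|x-\mathbf{m}|\le\sqrt{d}/2$).

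First, using this equivalence and the fact that $(\chi_\mathbf{m})_{\mathbf{m}\in\mathbb{Z}^d}$ is a partition of unity almost everywhere, I would obtain
\[\sum_{\mathbf{m}\in\mathbb{Z}^d} \langle\mathbf{m}\rangle^{2r}\|f_\mathbf{m}\|_{L^2}^2 \;\lesssim_{\,d,r}\; \sum_{\mathbf{m}\in\mathbb{Z}^d}\int_{\mathbb{R}^d}\chi_\mathbf{m}(x)\langle x\rangle^{2r}|f(x)|^2\,\mathrm{d}x \;=\; \|\langle x\rangle^r f\|_{L^2}^2.\]
Next, since $p'<2$, the pair $(2/p',\,2/(2-p'))$ is Hölder conjugate. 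Applying Hölder's inequality on $\mathbb{Z}^d$ to the factorisation $\|f_\mathbf{m}\|_{L^2}^{p'}=\bigl(\langle\mathbf{m}\rangle^{r}\|f_\mathbf{m}\|_{L^2}\bigr)^{p'}\langle\mathbf{m}\rangle^{-rp'}$ yields
\[\|f\|_{\ell^{p'}(L^2)}^{p'}\;\le\;\Big(\sum_{\mathbf{m}\in\mathbb{Z}^d}\langle\mathbf{m}\rangle^{2r}\|f_\mathbf{m}\|_{L^2}^2\Big)^{\!p'/2}\Big(\sum_{\mathbf{m}\in\mathbb{Z}^d}\langle\mathbf{m}\rangle^{-\frac{2rp'}{2-p'}}\Big)^{\!(2-p')/2}.\]
The second series converges precisely when $\tfrac{2rp'}{2-p'}>d$, which rearranges to $r>d\bigl(\tfrac{1}{p'}-\tfrac{1}{2}\bigr)$, exactly the hypothesis of the lemma. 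Combining the two estimates and taking the $p'$-th root then gives the stated bound.

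There is no serious obstacle here: the argument reduces to a weighted $\ell^{p'}$-to-$\ell^2$ Hölder step coupled with a local comparison of $\langle x\rangle$ with $\langle\mathbf{m}\rangle$, and the convergence threshold of the resulting sum over $\mathbb{Z}^d$ matches exactly the borderline $r=d\bigl(\tfrac{1}{p'}-\tfrac{1}{2}\bigr)$ appearing in the hypothesis. The only mild subtlety to monitor is the dependence of the implicit constant on $r$ and $p'$ (through both the cube-wise comparison of $\langle x\rangle^{2r}$ with $\langle\mathbf{m}\rangle^{2r}$ and the value of the convergent series), which however is harmless for the intended applications where $r$ and $p'$ lie in a fixed compact sub-range of their admissible set.
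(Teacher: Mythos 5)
Your proof is correct and follows essentially the same route as the paper's: you compare $\langle x\rangle$ with $\langle\mathbf{m}\rangle$ on each unit cube and then apply H\"older's inequality on $\mathbb{Z}^d$ with the conjugate pair determined by $\tfrac{1}{p'}=\tfrac{1}{q}+\tfrac{1}{2}$ (your exponents $2/p'$ and $2/(2-p')$ are just this H\"older step raised to the $p'$-th power, with $q=2p'/(2-p')$). Your closing remark about the constant's dependence on $r$ and $p'$ matches the paper's own caveat, which notes that the constant is uniform only on the relevant compact parameter range $p'\in[2-\delta,2)$ with $rq$ bounded away from $d$.
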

Theorem \ref{thm:Cwikel} is a direct consequence of \cite[Theorem 4.2]{Cwikel77}, Lemma \ref{lm:sobolev} is the usual Sobolev embedding, Theorem \ref{thm:simon} is \cite[Theorem 4.6]{Simon05} with an explicit dependence on the parameter $p'$, and Lemma \ref{lm:embedding} follows from a direct computation. In Appendix \ref{app:p'}, for the convenience of the reader, we prove Theorem \ref{thm:simon}, reproducing the proof of \cite[Theorem 4.6]{Simon05} and following the dependence on $p'$ in each estimate, and we prove Lemma \ref{lm:embedding}.

 We recall from the introduction the definition of the harmonic oscillator
\begin{equation*}
\mathbf{h}:=c_d(-\Delta+x^2),
\end{equation*}
where the constant $c_d$ is chosen such that $\mathbf{h}\ge e^e$.

\begin{proof}[Proof of Theorem~\ref{th: norm S^2,infty}]	
Without loss of generality, we assume that $0<\delta<1$
. Let $\Lambda_k:=e^{e^k}$. We will use the following decomposition:
\begin{equation*}
f = \sum_{k\in\mathbb{N}} \pi_k f,
\end{equation*}
where $\pi_k$ stands for the spectral projection
\begin{equation*}
\pi_k:=\mathds{1}_{\Lambda_k\le \mathbf{h}<\Lambda_{k+1}}.
\end{equation*}
Using that $\|\cdot\|_{\mathcal{L}^{2,\infty}}^{*}$ is equivalent to a certain norm $\|\cdot\|_{\mathcal{L}^{2,\infty}}$, we can write
\begin{align}
\big\|f(x) g(-i\nabla)\big\|_{\mathcal{L}^{2,\infty}}^{*}&\lesssim\big\|f(x) g(-i\nabla)\big\|_{\mathcal{L}^{2,\infty}} \notag \\
&\lesssim\sum_{k\in\mathbb{N}}\big\| (\pi_kf)(x) g(-i\nabla)\big\|_{\mathcal{L}^{2,\infty}}\lesssim\sum_{k\in\mathbb{N}}\big\| (\pi_kf)(x) g(-i\nabla)\big\|_{\mathcal{L}^{2,\infty}}^*. \label{eq:double_ln}
\end{align}



Let $p\in(2,2+\delta]$, $\frac{1}{p}+\frac{1}{p'}=1$, and let $g_p\in L^{p,\infty}$, $g_{p'}\in \ell^{p',\infty}(L^2)$ be such that $g^2=g_pg_{p'}$. 
Thanks to the relation $(\|A\|_{\mathcal{L}^{2,\infty}}^{*})^2 = \|A^* A\|_{\mathcal{L}^{1,\infty}}^*$ for any operator $A$ in $\mathcal{L}^{2,\infty}$, we have, for all $k\in\mathbb{N}$,
\begin{align}
\big(\big\| (\pi_kf)(x) \,  g(-i\nabla)\big\|_{\mathcal{L}^{2,\infty}}^*\big)^2 &=\big\|g(-i\nabla) (\pi_kf(x))^{2} g(-i\nabla)\big\|_{\mathcal{L}^{1,\infty}}^*  \nonumber \\
&= \|\pi_kf(x) \, g^2(-i\nabla) \, \pi_kf(x)\|_{\mathcal{L}^{1,\infty} }^* \nonumber \\
	&= \| \pi_kf(x)  \, g_p(-i\nabla) \, \big(\pi_kf(x) g_{p'}(-i\nabla)\big)^* \|_{\mathcal{L}^{1,\infty}}^* \nonumber \\
	 &\lesssim \| \pi_kf(x) \,  g_p(-i\nabla) \|_{\mathcal{L}^{p,\infty}}^* \, \| \pi_kf(x) g_{p'}(-i\nabla) \|_{\mathcal{L}^{p',\infty}}^*, \label{eq:Holder-for-pi_kf-g}
\end{align}
for any $p\in(2,2+\delta]$, thanks to H\"older's inequality in weak trace ideals (\cite[Theorem 2.1]{
Simon76}).

Since $p>2$, the usual Cwikel estimate, Theorem \ref{thm:Cwikel}, yields
\begin{equation} \label{eq:Cwikel-for-A_pk}
	\| \pi_kf(x) \,  g_p(-i\nabla) \|_{\mathcal{L}^{p,\infty}}^* \lesssim_{\,d} (p-2)^{-\frac{1}{p}}\|\pi_kf\|_{L^p} \|g_p\|_{L^{p,\infty}}^*.
\end{equation}
On the other hand, since $p'<2$, Simon's result, Theorem \ref{thm:simon},  implies
\begin{equation}
	\label{eq:simon res p<2}
	\| \pi_kf(x) \,  g_{p'}(-i\nabla) \|_{\mathcal{L}^{p',\infty}}^* \lesssim_{\,d,\delta} (2-p')^{\frac{1}{p'}-1}\|\pi_kf\|_{\ell^{p'}(L^2)} \|g_{p'}\|_{\ell^{p',\infty}(L^2)}^*.
\end{equation}
It follows from \eqref{eq:Holder-for-pi_kf-g}--\eqref{eq:simon res p<2} that, for all $p>2$,
\begin{align}
\big(\big\| (\pi_kf)(x) \,  g(-i\nabla)\big\|_{\mathcal{L}^{2,\infty}}^*\big)^2 &\lesssim_{\,d,\delta} (p-2)^{-\frac{1}{p}} (2-p')^{\frac{1}{p'}-1} \|\pi_kf\|_{L^p} \|\pi_kf\|_{\ell^{p'}(L^2)} \notag\\
&\qquad\quad\times\inf_{\substack{g_p,g_{p'}\\g^2=g_pg_{p'}}} \big(\|g_p\|_{L^{p,\infty}}^* \|g_{p'}\|_{\ell^{p',\infty}(L^2)}^*\big). \label{eq:newthm1.7}
\end{align}

We now give bounds on the $\pi_k f$ terms. Choosing $p\in(2,2+\delta]$ in such a way that $t_p:= d(\frac12-\frac1p)\le2$, the usual Sobolev embedding, Lemma \ref{lm:sobolev}, together with the quadratic form inequality $\langle-i\nabla\rangle^{t_p}\le\langle\mathbf{h}\rangle^{t_p/2}$ give
\begin{equation*}
\|\pi_kf\|_{L^p}\lesssim_{\,d} \|\langle-i\nabla\rangle^{t_p}\pi_kf\|_{L^2}\lesssim_{\,d}  \Lambda_{k+1}^{t_p/2}\|\pi_kf\|_{L^2}.
\end{equation*}
At this point we take 
\begin{equation*}
\frac{1}{p}=\frac{1}{2}-\frac{\delta}{d\ln\Lambda_{k+1}}
\end{equation*}
so that $p\in(2,2+\delta]$ and $t_p=\frac{\delta}{\ln\Lambda_{k+1}}$, which in turn gives $\Lambda_{k+1}^{t_p/2}= e^{\delta/2}$.

To treat the contribution of $\|\pi_kf\|_{\ell^{p'}(L^2)}$, we use the embedding $L^2(\langle x\rangle^{2r}\mathrm{d}x) \hookrightarrow \ell^{p'}(L^2)$ for any $r>d(\frac{1}{p'}-\frac{1}{2})$, see Lemma \ref{lm:embedding}. With our choice of $p$, we have 
\begin{equation*}
\frac{1}{p'}=\frac{1}{2}+\frac{\delta}{d\ln\Lambda_{k+1}}.
\end{equation*}
 Hence we can choose $r=\frac{2\delta}{\ln\Lambda_{k+1}}\le2$ yielding $\langle x\rangle^{r}\le\langle\mathbf{h}\rangle^{r/2}$ and
\begin{equation*}
	 \|\pi_kf\|_{\ell^{p'}(L^2)} \lesssim_{\,d,\delta} \|\langle x\rangle^r\pi_kf\|_{L^2} \lesssim_{\,d,\delta} \Lambda_{k+1}^{r/2} \|\pi_kf\|_{L^2}.
\end{equation*}
Similarly as before, we observe that $\Lambda_{k+1}^{r/2}= e^\delta$. Hence our previous estimates imply
\begin{equation}\label{eq:prod-pi_k-f}
	\|\pi_kf\|_{L^p}\|\pi_kf\|_{\ell^{p'}(L^2)} \lesssim_{\,d,\delta} \|\pi_kf\|_{L^2}^2.
\end{equation}

Next, using the relations
\begin{equation*}
p-2=\frac{4\delta}{d\ln\Lambda_{k+1}-2\delta} , \quad 2-p'=\frac{4\delta}{d\ln \Lambda_{k+1}+2\delta},
\end{equation*}
yields the bound
\begin{equation}\label{eq:bound-prod-p}
\big [(p-2)(2-p') \big]^{-\frac{1}{p}} \lesssim_{\,d,\delta} \ln \Lambda_{k+1}.
\end{equation}

Putting together \eqref{eq:newthm1.7}, \eqref{eq:prod-pi_k-f} and \eqref{eq:bound-prod-p}
gives
\begin{equation*}
\big\| (\pi_kf)(x) \,  g(-i\nabla)\big\|_{\mathcal{L}^{2,\infty}}^* 
\lesssim_{\,d,\delta} (\ln \Lambda_{k+1})^{\frac12} \|\pi_kf\|_{L^2} \Big( \sup_{\substack{2< p\leq 2+\delta\\\frac{1}{ p}+\frac{1}{ p'}=1}} \inf_{\substack{g_p,g_{p'}\\ g^2=g_pg_{p'}}} \|g_{p}\|_{L^{{ p},\infty}}^* \|g_{p'}\|_{\ell^{{ p}',\infty}(L^2)}^* \Big)^{\frac12}.
\end{equation*}
Since $\ln \Lambda_{k+1}=e\ln \Lambda_{k}$, the $k$-dependent part of the right hand side can then be summed over $k$ as follows:
\begin{align*}
 \sum_{k\in\mathbb{N}}  (\ln \Lambda_{k+1})^{\frac12} \|\pi_kf\|_{L^2}
 & \lesssim\sum_{k\in\mathbb{N}} \big\|\pi_k((\ln\mathbf{h})^{\frac12}f)\big\|_{L^2}  \\
&\lesssim_{\,d,\varepsilon} \sum_{k\in\mathbb{N}} k^{-\frac12-\varepsilon} \big\|\pi_k((\ln\mathbf{h})^{\frac12} (\ln\ln\mathbf{h})^{\frac12+\varepsilon} f)\big\|_{L^2} \\
 &\lesssim_{\,d,\varepsilon} \big\| (\ln\mathbf{h})^{\frac12} (\ln\ln\mathbf{h})^{\frac12+\varepsilon}f\big\|_{L^2},
\end{align*}
where we used that 
$0<\varepsilon
$ and the Cauchy-Schwarz inequality in the last inequality. This along with 
\eqref{eq:double_ln} implies the statement of Theorem~\ref{th: norm S^2,infty}.
\end{proof}

\section{Proof of Theorems~\ref{th: N_<0} and \ref{th: N_<0critical}}\label{sec:main_thm}
In this section we prove Theorem~\ref{th: N_<0} using the Birman-Schwinger principle, the variational principle, Theorem~\ref{thm:low} and Theorem~\ref{thm:high}.
\begin{proof}[Proof of Theorems~\ref{th: N_<0} and \ref{th: N_<0critical}]
Let $E<0$. To estimate $N_{\le E}(H_s)$ we use the Birman-Schwinger principle (see Proposition \ref{prop:birm-sch}) which shows that
\begin{equation}
	\label{comp:NH-NKE}
N_{\le E}(H_s) = N_{\ge 1}(K_E),
\end{equation}
where we recall that the Birman-Schwinger operator $K_E$ is given by
\begin{equation*}
K_E = v(x) \big( (-\Delta)^s - E \big )^{-1} v(x), 
\end{equation*}
with $v(x)= \sqrt{V(x)}$. We recall also that $n=\lfloor s-\frac{d}{2}\rfloor$ and
\begin{equation}\label{eq:F_n2}
	\mathcal{F}_n:=\mathrm{span}\big\{ x^\alpha v \, | \, \alpha\in\mathbb{N}_0^d , \, |\alpha|\le n \big\},
\end{equation}
where $|\alpha|=\sum_{j=1}^d \alpha_j$ and $x^\alpha = \prod_{j=1}^d x_j^{\alpha_j}$. 
Note that, with $S_1:=\{\alpha\in\mathbb{N}_0^d\mid |\alpha|\leq n\}$,
\begin{align}\label{eq:dimFn}
	\mathrm{dim}(\mathcal{F}_n)\le  |S_1| = \binom{d+n}{d} .
\end{align}
Indeed, set $S_2:=\{X\subseteq \{0,1,2,\dots,1,d+n-1\} \mid |X|=d \}$.
Then
\[ S_1\ni \alpha \mapsto \{ k-1 +  \alpha_1 +\cdots + \alpha_k \mid 1\leq k\leq d\}\in S_2\]
and
\[S_2\ni \{\beta_1<\cdots<\beta_d\} \mapsto (\beta_1,\beta_2-\beta_1-1,\dots,\beta_d-\beta_{d-1}-1) \in S_1 \]
are inverse functions of each other and hence bijections. It follows that $|S_1|=|S_2|=\binom{d+n}{d}$.


By the variational principle recalled in Proposition~\ref{prop: var princ}, if $\Pi_{\mathcal{F}_n}^\perp$ denotes the orthogonal projection onto $\mathcal{F}_n^\perp$, we have 
\begin{equation}\label{eq:variationnal-KE}
	N_{\ge 1}(K_E) \le \binom{d+n}{d}  + N_{\ge1}( K^\perp_E),
\end{equation}
with $K_E^\perp=\Pi_{\mathcal{F}_n}^\perp K_{E,}\Pi_{\mathcal{F}_n}^\perp$.

Let $j_{\max}:=\max\{j\geq0\mid\lambda_{j}(K_{E}^{\perp})\geq1\}$. Using
that $\lambda_{j}(K_{E}^{\perp})$ is a decreasing sequence and actually
coincides with the singular values of $K_{E}^{\perp}$ (as $K_{E}^{\perp}\geq0$), we have
\begin{equation}\label{eq:NKE<KEL1infty}
N_{\geq1}(K_{E}^{\perp})=(j_{\max}+1)\leq(j_{\max}+1)\lambda_{j_{\max}}(K_{E}^{\perp})\leq\|K_{E}^{\perp}\|_{\mathcal{L}^{1,\infty}}^{*}\,.
\end{equation}
We now use the decomposition in low- and high-frequencies parts of
$K_{E}^{\perp}$ as defined in \eqref{def: K_0 + K_inf}--\eqref{def: K_0 + K_inf2}, obtaining
\begin{equation}\label{eq:KE-leq-KElow+KEhigh}
\|K_{E}^{\perp}\|_{\mathcal{L}^{1,\infty}}^{*}\leq2\|K_{E,<}^{\perp}\|_{\mathcal{L}^{1,\infty}}^{*}+2\|K_{E,>}^{\perp}\|_{\mathcal{L}^{1,\infty}}^{*}\leq2\|K_{E,<}^{\perp}\|_{\mathcal{L}^{1}}+2\|K_{E,>}\|_{\mathcal{L}^{1,\infty}}^{*}.
\end{equation}
It follows from \eqref{comp:NH-NKE}-\eqref{eq:KE-leq-KElow+KEhigh}, Theorem~\ref{thm:low} and Theorem~\ref{thm:high} that
\begin{equation*}
	N_{<0}(H_s) -\binom{d+n}{d} \lesssim_{\,d,s}
	\begin{cases}
			 \big\|\langle x\rangle^{s-\frac{d}{2}} \, v \big\|_{L^2}^2 & \textnormal{if } s-\frac{d}{2} \notin \N_0 ,\vspace{0,1cm}\\
			 \big\|\langle x \rangle ^{s-\frac{d}{2}}\sqrt{1+ \ln\langle x \rangle} \, v\big\|_{L^2}^2 & \textnormal{if } s-\frac{d}{2} \in \N , \vspace{0,1cm} \\
			 \big \|(\ln \mathbf{h})^{\frac12}(\ln\ln \mathbf{h})^{\frac12+\varepsilon}\,v\big\|^2_{L^2} & \textnormal{if } s=\frac{d}{2}.
	\end{cases}
\end{equation*}
This proves Theorem~\ref{th: N_<0} in the case where $s-d/2\in\N$ and Theorem \ref{th: N_<0critical}. In the case where $s-d/2\notin\N_0$, it remains to show that we can replace $\langle x\rangle$ by $|x|$. To this end, we argue as follows.\footnote{We are grateful to R. Frank for pointing out this argument to us.} By scaling, the operators $H_s$ and $R^{2s}(-\Delta)^s-V(R^{-1}x)$ are unitarily equivalent, for any $R>0$. Hence
\begin{equation*}
N_{<0}(H_s)=N_{<0}\big( (-\Delta)^s-R^{-2s}V(R^{-1}x)\big).
\end{equation*}
Applying the previous estimate (for $s-d/2\notin\N_0$), we obtain that
\begin{equation*}
	N_{<0}(H_s) -\binom{d+n}{d} \lesssim_{\,d,s}
			 R^{-2s} \int_{\mathbb{R}^d} \langle x\rangle^{2s-d} V(R^{-1}x) \mathrm{d}x =  \int_{\mathbb{R}^d} (R^{-2}+x^2)^{s-\frac{d}{2}} V(x) \mathrm{d}x .
\end{equation*}
Letting $R\to\infty$, using the monotone convergence theorem, we deduce that
\begin{equation*}
	N_{<0}(H_s) -\binom{d+n}{d} \lesssim_{\,d,s} \int_{\mathbb{R}^d} |x|^{2s-d} V(x) \mathrm{d}x ,
\end{equation*}
which proves Theorem \ref{th: N_<0} in the case where $s-d/2\notin\N_0$.
\end{proof}

%
%
We conclude this section with a proposition showing that $H_s=(-\Delta)^s-V$ has at least $\mathrm{dim}\,\mathcal{F}_n$ negative eigenvalues for smooth compactly supported $V$ (with $\mathcal{F}_n$ defined in \eqref{eq:F_n2}). Taking $V$ such that $\mathrm{dim}\,\mathcal{F}_n$ is maximal, i.e. $\mathrm{dim}(\mathcal{F}_n)= |\{\alpha\in\mathbb{N}_0^d\mid |\alpha|\leq n\}|$, shows that the constant $\binom{d+n}{d}$ cannot be removed from the statement of Theorem~\ref{th: N_<0}. The proof of  Proposition \ref{prop:binom_constant} is a fairly direct generalization of that given in \cite[Theorem XIII.11]{ReedSimon4}.
\begin{proposition}\label{prop:binom_constant}
Let $d\ge 1$, $s\ge\frac{d}{2}$ and $n=\lfloor s-\frac{d}{2}\rfloor$. Let $V\in\mathrm{C}_0^\infty(\mathbb{R}^d)$ be such that $V\ge0$. Then the operator $H_s=(-\Delta)^s-V$ has at least $\mathrm{dim}\, \mathcal{F}_n$ negative eigenvalues.
\end{proposition}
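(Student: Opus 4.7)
The plan is to combine the Birman--Schwinger principle with the min-max principle, taking $\mathcal{F}_n$ itself as test subspace. By Proposition~\ref{prop:birm-sch} and the obvious inequality $N_{<0}(H_s)\ge N_{\le E}(H_s)$ for every $E<0$, it suffices to produce some $E_*<0$ for which $K_{E_*}=v(H_0-E_*)^{-1}v$ admits a $\dim\mathcal{F}_n$-dimensional subspace of $L^2$ on which $K_{E_*}\ge \mathrm{Id}$ in the quadratic-form sense; Proposition~\ref{prop: var princ} then yields $N_{\ge 1}(K_{E_*})\ge \dim\mathcal{F}_n$, whence the result.

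The core step is to show that the Rayleigh quotient of $K_E$ on $\mathcal{F}_n$ blows up as $E\uparrow 0$. By Plancherel,
\[
\langle \psi, K_E\psi\rangle
= \int_{\mathbb{R}^d} \frac{|\widehat{v\psi}(\xi)|^2}{|\xi|^{2s}+|E|}\,\mathrm{d}\xi, \qquad \psi\in L^2,\ E<0.
\]
Fix a basis $\{x^\beta v : \beta\in B\}$ of $\mathcal{F}_n$ with $B\subset\{\beta\in\mathbb{N}_0^d : |\beta|\le n\}$. Every $\psi\in\mathcal{F}_n\setminus\{0\}$ writes uniquely as $\psi=P_\psi v$ with $P_\psi=\sum_{\beta\in B}c_\beta x^\beta$ a nonzero polynomial; then $v\psi=P_\psi V\in C_c^\infty$, so $\widehat{v\psi}$ is Schwartz with Taylor coefficients $\partial^\alpha\widehat{v\psi}(0)\propto M_\alpha := \int x^\alpha P_\psi\,V\,\mathrm{d}x$. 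The key identity
\[
\sum_{\beta\in B}\bar c_\beta\, M_\beta
= \int_{\mathbb{R}^d} |P_\psi|^2\,V\,\mathrm{d}x = \|\psi\|_{L^2}^2 > 0
\]
forces some $M_{\beta_0}$ with $|\beta_0|\le n$ to be nonzero, so $\widehat{v\psi}$ vanishes at the origin to some integer order $m\le n$. Consequently $|\widehat{v\psi}(\xi)|^2\gtrsim|\xi|^{2m}$ on a narrow conical neighborhood of $0$, and since $s\ge d/2$ gives $2m-2s+d\le 0$, a polar-coordinate computation yields $\int |\widehat{v\psi}|^2/|\xi|^{2s}\,\mathrm{d}\xi=+\infty$. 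Monotone convergence, using $(|\xi|^{2s}+|E|)^{-1}\uparrow |\xi|^{-2s}$ as $E\uparrow 0$, then gives $\langle \psi, K_E\psi\rangle\to+\infty$ pointwise on $\mathcal{F}_n\setminus\{0\}$.

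To upgrade this pointwise blow-up to a uniform lower bound on the unit sphere of $\mathcal{F}_n$, I would combine compactness of the latter (since $\dim\mathcal{F}_n<\infty$) with continuity of $\psi\mapsto \langle\psi,K_E\psi\rangle$ for each fixed $E<0$ and with the operator-monotonicity $K_{E_1}\le K_{E_2}$ whenever $E_1\le E_2<0$, which comes from operator-monotonicity of $x\mapsto x^{-1}$ applied to $H_0+|E|$. A standard finite-cover argument then produces, for any $\Lambda>0$, some $E_*=E_*(\Lambda)<0$ such that $\langle\psi,K_{E_*}\psi\rangle\ge\Lambda\|\psi\|_{L^2}^2$ for every $\psi\in\mathcal{F}_n$; choosing $\Lambda=1$ completes the proof.

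The main obstacle I expect is the Taylor/vanishing-order computation of the second paragraph, particularly the identity $\sum\bar c_\beta M_\beta=\|\psi\|_{L^2}^2$: it is exactly there that the specific shape of $\mathcal{F}_n=\mathrm{span}\{x^\alpha v : |\alpha|\le n\}$ enters, forcing one of the low-order Taylor coefficients of $\widehat{v\psi}$ to be nonzero and thereby making the Birman--Schwinger quadratic form blow up at the rate needed to beat the singularity of $|\xi|^{-2s}$ for $s\ge d/2$. The compactness step of the third paragraph is then essentially automatic given $\dim\mathcal{F}_n<\infty$.
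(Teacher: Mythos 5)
Your argument is correct and follows essentially the same route as the paper: Birman--Schwinger, test on $\mathcal{F}_n$, exploit that $\widehat{v\psi}$ has a nonzero Taylor coefficient of order $m\le n$ to get $\langle\psi,K_E\psi\rangle\to\infty$ as $E\uparrow0$, since $2m-2s+d\le0$. Two places where you are in fact more careful than the published proof: you only claim $|\widehat{v\psi}(\xi)|\gtrsim|\xi|^{m}$ on a cone near the origin (the paper asserts the lower bound $|\widehat{v\varphi}(\xi)|\ge c|\xi|^{n}$ on an entire ball, which is not literally true if the leading homogeneous Taylor polynomial has zeros away from the origin, though this does not affect the divergence of the integral), and you explicitly address the upgrade from pointwise blow-up to a uniform lower bound on the compact unit sphere of $\mathcal{F}_n$ using monotonicity in $E$, a point the paper passes over silently.

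One genuine slip: the step ``$K_{E_*}\ge\mathrm{Id}$ on a $\dim\mathcal{F}_n$-dimensional subspace implies $N_{\ge1}(K_{E_*})\ge\dim\mathcal{F}_n$'' does not follow from Proposition~\ref{prop: var princ}, which only bounds $N_{\ge1}(K)$ \emph{from above} by $\dim\mathcal{F}+N_{\ge1}(\Pi_{\mathcal{F}^\perp}K\Pi_{\mathcal{F}^\perp})$. What you need is the min-max principle, Theorem~\ref{thm:min-max-principle}: if $\mathcal{W}$ has dimension $D$ and $\langle u,Ku\rangle\ge\|u\|^2$ for all $u\in\mathcal{W}$, then for every subspace $\mathcal{S}$ with $\dim\mathcal{S}\le D-1$ one has $\mathcal{S}^\perp\cap\mathcal{W}\ne\{0\}$, hence $\lambda_{D-1}(K)\ge1$ and therefore $N_{\ge1}(K)\ge D$. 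This is exactly the tool the paper invokes at this point; swapping the reference fixes the proof.
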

\begin{proof}
By the Birman-Schwinger principle (see Proposition \ref{prop:birm-sch}), it suffices to show that $N_{\ge1}(K_E)\ge\mathrm{dim}\,\mathcal{F}_n$ for $E<0$, $|E|$ small enough, where $K_E$ is the Birman-Schwinger operator defined as above, namely $K_E=v(x)((-\Delta)^s - E)^{-1}v(x)$.

Let $\varphi\in\mathcal{F}_n$, $\varphi\neq0$. Then $\varphi\in\mathrm{C}_0^\infty(\mathbb{R}^d)$ and we claim that there exist $\varepsilon>0$ and $c>0$ (which depends on $V$, $\varphi$, $n$) such that, for all $\xi\in\mathbb{R}^d$ with $|\xi|\le\varepsilon$, 
\begin{equation}\label{eq:propphi}
|\widehat{v\varphi}(\xi)|\ge c|\xi|^{n}.
\end{equation}
Indeed, if this property did not hold, then we would have that for all $\alpha\in\mathbb{N}_0^d$ such that $|\alpha|\le n$, 
\begin{equation*}
0=\partial_\xi^\alpha\widehat{v\varphi}(0)=(-i)^\alpha\widehat{x^\alpha v\varphi}(0)=(-i)^\alpha\int_{\mathbb{R}^d}x^\alpha v(x)\varphi(x)\mathrm{d}x,
\end{equation*}
which contradicts the facts that $\varphi\in\mathcal{F}_n$ and $\varphi\neq0$.

Now using \eqref{eq:propphi}, we write, for all $\varphi\in\mathcal{F}_n$, $\varphi\neq0$,
\begin{equation*}
\langle \varphi,K_E\, \varphi\rangle = \int_{\mathbb{R}^d} \big(|\xi|^{2s}-E\big)^{-1} \big| \widehat{v\varphi}(\xi)\big|^2\mathrm{d}\xi \ge c \int_{|\xi|\le\varepsilon} |\xi|^{2n} \big(|\xi|^{2s}-E\big)^{-1} \mathrm{d}\xi.
\end{equation*}
Since $2s-2n\ge d$, the previous integral tends to infinity as $E\to0$. Hence it follows from the min-max principle (see Theorem \ref{thm:min-max-principle}) that, for $|E|$ small enough, $K_E$ has at least $\mathrm{dim} \, \mathcal{F}_n$ eigenvalues larger than $1$. This concludes the proof.
\end{proof}

\appendix

\section{Variational principle} \label{app:variational}
In this appendix, we recall how to estimate the number of eigenvalues larger than $1$ of an operator, by the number of eigenvalues larger than $1$ of the restriction of this operator to a linear subspace, up to the dimension of the subspace itself. We refer to e.g. \cite[Section 1.2.3]{FrankLaptevWeidl23} for general versions of the variational principle.

If $\mathcal{F}$ is a closed linear subspace of a Hilbert space $\mathcal{H}$, $\Pi_{\mathcal{F}}$ denotes the orthogonal projection onto $\mathcal{F}$.
\begin{proposition}\label{prop: var princ}
Let $K$ a compact self-adjoint non-negative operator on a Hilbert space $\mathcal{H}$.
Then, for any linear subspace $\mathcal{F}$  of $\mathcal{H}$ of finite dimension,
\begin{equation}
N_{\geq1}(K)\leq \dim \mathcal{F}+N_{\geq1}(\Pi_{\mathcal{F}^{\perp}}K\Pi_{\mathcal{F}^{\perp}})\,.\label{eq:bound-number-eigenvalues-greater-than-one-projection}
\end{equation}
\end{proposition}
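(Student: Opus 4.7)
The plan is to use the min-max (Courant-Fischer-Weyl) characterization of the eigenvalues of a compact self-adjoint operator and then a straightforward dimension-count intersection argument. Write $N := N_{\geq 1}(K)$ and $d := \dim \mathcal{F}$; the desired inequality is trivial if $N \leq d$, so assume $N > d$. Set $\tilde K := \Pi_{\mathcal{F}^\perp} K \Pi_{\mathcal{F}^\perp}$. It will suffice to exhibit a subspace of $\mathcal{H}$ of dimension at least $N - d$ on which the quadratic form associated with $\tilde K$ dominates $\|\cdot\|^2$, for then the min-max principle immediately yields $\lambda_{N-d}(\tilde K) \geq 1$, hence $N_{\geq 1}(\tilde K) \geq N - d$.

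To produce such a subspace, first set $V_N := \mathds{1}_{[1,\infty)}(K)\mathcal{H}$, which is well-defined and of dimension exactly $N$ since $K$ is compact, self-adjoint and non-negative. By construction, $\langle \varphi, K \varphi \rangle \geq \|\varphi\|^2$ for every $\varphi \in V_N$. Next, consider the restriction of $\Pi_\mathcal{F}$ to $V_N$: its image lies in $\mathcal{F}$, so is of dimension at most $d$, and its kernel is precisely $V_N \cap \mathcal{F}^\perp$. The rank-nullity theorem then gives
\[
\dim(V_N \cap \mathcal{F}^\perp) \geq N - d.
\]

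Finally, for any $\varphi \in V_N \cap \mathcal{F}^\perp$, we have $\Pi_{\mathcal{F}^\perp}\varphi = \varphi$, so
\[
\langle \varphi, \tilde K \varphi \rangle = \langle \Pi_{\mathcal{F}^\perp}\varphi, K \Pi_{\mathcal{F}^\perp}\varphi\rangle = \langle \varphi, K\varphi\rangle \geq \|\varphi\|^2,
\]
using that $\varphi \in V_N$. Thus $V_N \cap \mathcal{F}^\perp$ is the promised subspace: the min-max principle, applied in the form $\lambda_k(\tilde K) = \max_{\dim V = k} \min_{\varphi \in V,\ \|\varphi\|=1}\langle \varphi, \tilde K \varphi \rangle$ with $k = N - d$, yields $\lambda_{N-d}(\tilde K) \geq 1$, and hence $N_{\geq 1}(\tilde K) \geq N - d$, which is precisely \eqref{eq:bound-number-eigenvalues-greater-than-one-projection}.

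There is no real obstacle in the argument; the only point that requires a moment's care is the dimension bound $\dim(V_N \cap \mathcal{F}^\perp) \geq N - d$, which one must justify via the rank-nullity theorem applied to $\Pi_\mathcal{F}|_{V_N}$ rather than by a naive codimension count (which would only make sense in finite dimensions). Everything else reduces to the invariance property $\Pi_{\mathcal{F}^\perp}\varphi = \varphi$ on $\mathcal{F}^\perp$ combined with the defining spectral inequality on $V_N$.
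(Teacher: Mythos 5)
Your proof is correct, and it takes a route that is recognizably different from the paper's, even though both rest on the Courant--Fischer variational principle. The paper uses the \emph{min-max} form stated in its Theorem~B.2 (minimize over excluded subspaces $\mathcal{S}$, maximize over $\mathcal{S}^\perp$): for any $\mathcal{V}\subseteq\mathcal{F}^\perp$ of dimension $k$, it tests with $\mathcal{S}=\mathcal{F}\oplus\mathcal{V}$ and derives the uniform eigenvalue shift $\lambda_{D+k}(K)\leq\lambda_k(\Pi_{\mathcal{F}^\perp}K\Pi_{\mathcal{F}^\perp})$ for all $k$, from which the counting inequality follows. You instead use the dual \emph{max-min} form and build one explicit test subspace: you take the spectral subspace $V_N=\mathds{1}_{[1,\infty)}(K)\mathcal{H}$, intersect it with $\mathcal{F}^\perp$, and control the dimension of the intersection by rank--nullity applied to $\Pi_{\mathcal{F}}|_{V_N}$ (a necessary care in infinite dimensions, as you note). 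The observation that $\Pi_{\mathcal{F}^\perp}$ acts as the identity on this intersection then transfers the form bound $K\geq1$ from $V_N$ to $\Pi_{\mathcal{F}^\perp}K\Pi_{\mathcal{F}^\perp}$. Each approach has its merits: the paper's gives the stronger intermediate statement $\lambda_{D+k}(K)\leq\lambda_k(\Pi_{\mathcal{F}^\perp}K\Pi_{\mathcal{F}^\perp})$ for every index $k$, while yours is more constructive, exhibiting a concrete subspace on which the projected form stays $\geq1$, and goes directly to the counting estimate without tracking the full sequence. (A minor cosmetic point: your max-min formula indexes eigenvalues from $1$, whereas the paper's Theorem~B.2 indexes from $0$; the final count $N_{\geq1}(\tilde K)\geq N-d$ is unaffected.)
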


To prove this result we use the following simple version of the min-max principle. (See e.g. \cite{ReedSimon4} for a more general version.)

\begin{theorem}\label{thm:min-max-principle}
	Let $K$ a compact selfadjoint non-negative operator on a Hilbert space $\mathcal{H}$.  Then the sequence defined for $j\ge0$ by
\begin{equation*}
		\lambda_j(K) = \min_{\dim \mathcal{S}=j} \max_{\substack{u \in \mathcal{S}^\perp\\ \|u\|=1}}  \langle u , K u\rangle
\end{equation*}
coincides with the  non-increasing sequence either of the positive eigenvalues of $K$ if $K$ is of infinite rank, or, otherwise, of all its eigenvalues. Here the minimum is taken over all linear subspaces $\mathcal{S}$ of $\mathcal{H}$ of dimension $\dim\mathcal{S}=j$.
\end{theorem}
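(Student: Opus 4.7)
The plan is to establish the identity $\lambda_j(K) = \mu_j$, where $(\mu_k)_{k\ge 0}$ denotes the non-increasing sequence of eigenvalues described in the statement, by proving two matching inequalities after diagonalizing $K$ with the spectral theorem for compact self-adjoint operators. First I would invoke the spectral theorem to obtain an orthonormal family $(e_k)_{k\ge 0}$ of eigenvectors of $K$, with $K e_k = \mu_k e_k$, and use that every $u\in\mathcal{H}$ decomposes as $u = u_0 + \sum_k \langle e_k,u\rangle e_k$ with $u_0\in\ker K$, so that $\langle u,Ku\rangle = \sum_k \mu_k |\langle e_k,u\rangle|^2$.

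For the upper bound $\lambda_j(K)\le \mu_j$, I would use the explicit test subspace $\mathcal{S}_0 := \mathrm{span}\{e_0,\ldots,e_{j-1}\}$, with the convention $\mathcal{S}_0 = \{0\}$ when $j=0$. For any unit vector $u\in \mathcal{S}_0^\perp$, the coefficients $\langle e_k,u\rangle$ vanish for $k<j$, so the spectral expansion combined with the monotonicity $\mu_k\le \mu_j$ for $k\ge j$ immediately gives $\langle u,Ku\rangle \le \mu_j$. The inner maximum over $\mathcal{S}_0^\perp$ is in fact attained at $u=e_j$ (or, in the degenerate case where $\mu_j=0$, any unit vector in $\mathcal{S}_0^\perp$ gives a value $\le 0 = \mu_j$), so we obtain the upper bound.

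For the lower bound $\lambda_j(K)\ge \mu_j$, I would use a dimension-counting argument. Given any subspace $\mathcal{S}\subset\mathcal{H}$ of dimension $j$, set $\mathcal{M} := \mathrm{span}\{e_0,\ldots,e_j\}$, which has dimension $j+1$. The restriction to $\mathcal{M}$ of the orthogonal projection onto $\mathcal{S}$ maps a $(j+1)$-dimensional space into a $j$-dimensional one and must therefore have a non-trivial kernel, producing a unit vector $u\in\mathcal{M}\cap\mathcal{S}^\perp$. Expanding $u = \sum_{k=0}^j c_k e_k$ with $\sum_{k=0}^j|c_k|^2 = 1$, I get $\langle u,Ku\rangle = \sum_{k=0}^j \mu_k |c_k|^2 \ge \mu_j$; taking the maximum over $u\in\mathcal{S}^\perp$ and then the minimum over $\mathcal{S}$ yields the desired inequality.

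Combining both bounds gives $\lambda_j(K) = \mu_j$, with the outer minimum actually attained at the explicit choice $\mathcal{S}_0$. I do not anticipate any substantive obstacle; the only mild subtlety is to separate the infinite-rank case (where $\mu_j>0$ for every $j$ and the formula therefore reproduces the positive eigenvalues of $K$) from the finite-rank case (where $\mu_j=0$ once $j$ exceeds $\mathrm{rank}(K)$, which is still covered by the same argument thanks to $K\ge 0$ forcing $\lambda_j(K)\ge 0$ on the one hand, and to the test subspace $\mathcal{S}_0$ yielding $\lambda_j(K)\le 0$ on the other).
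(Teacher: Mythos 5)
Your proof is correct, and it is the standard Courant--Fischer argument: diagonalize $K$ by the spectral theorem, prove the upper bound $\lambda_j(K)\le\mu_j$ with the explicit test subspace $\mathcal{S}_0=\mathrm{span}\{e_0,\dots,e_{j-1}\}$, and prove the lower bound $\lambda_j(K)\ge\mu_j$ by a dimension count producing a unit vector in $\mathrm{span}\{e_0,\dots,e_j\}\cap\mathcal{S}^\perp$. Note, however, that the paper does not actually prove Theorem~\ref{thm:min-max-principle}: it only states it and cites Reed--Simon for the proof, so there is no in-paper argument to compare with. Your write-up supplies exactly the argument that reference gives; the one minor point worth tightening is the finite-rank case, where you should say explicitly that for $j\ge\mathrm{rank}\,K$ the vectors $e_{\mathrm{rank}\,K},\dots,e_j$ are chosen as an orthonormal family in $\ker K$ (possible whenever $\dim\mathcal{H}>j$), so that $\mathcal{M}$ and $\mathcal{S}_0$ are well defined and $\mathcal{S}_0^\perp\subset\ker K$ forces the inner maximum to equal $0=\mu_j$ rather than merely be $\le 0$.
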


\begin{proof}[Proof of Proposition~\ref{prop: var princ}]
Let $D=\dim\mathcal{F}$. By the min-max principle in Theorem~\ref{thm:min-max-principle},
\[
\lambda_{D+k}(K)=\min_{\dim \mathcal{S}=D+k}\max_{\substack{u\in \mathcal{S}^{\perp}\\ \|u\|=1}} \langle u,Ku\rangle\,.
\]
For any subspace $\mathcal{V}$ of $\mathcal{F}^{\perp}$ of dimension $k$, we have $\dim (\mathcal{F}\oplus\mathcal{V})= D+k$. 
As $u=\Pi_{\mathcal{F}^{\perp}}u$ for $u\in\mathcal{F}^{\perp}$,
\[
\lambda_{D+k}(K)
\leq 
\max_{\substack{u\in (\mathcal{F}\oplus\mathcal{V})^{\perp}\\
\|u\|=1
}
} \langle u,Ku\rangle
=  \max_{\substack{u\in\mathcal{F}^{\perp}\cap \mathcal{V}^{\perp}\\ \|u\|=1}} 
\langle \Pi_{\mathcal{F}^{\perp}}u,K\Pi_{\mathcal{F}^{\perp}}u\rangle
= \max_{\substack{u\in\mathcal{V}^{\perp}\\ \|u\|=1}} 
\langle u,\Pi_{\mathcal{F}^{\perp}}K\Pi_{\mathcal{F}^{\perp}}u\rangle,
\]
for any subspace $\mathcal{V}$ of $\mathcal{F}^{\perp}$ of dimension $k$.
This implies
\begin{equation}
\lambda_{D+k}(K)\leq\min_{\substack{\dim\mathcal{V}=k\\
\mathcal{V}\subseteq\mathcal{F}^{\perp}
}
}\max_{\substack{u\in \mathcal{V}^{\perp}\\
\|u\|=1
}
} \langle u,\Pi_{\mathcal{F}^{\perp}}K\Pi_{\mathcal{F}^{\perp}}u\rangle =\lambda_{k}(\Pi_{\mathcal{F}^{\perp}}K\Pi_{\mathcal{F}^{\perp}})\,.\label{eq:comparison-eigenvalues-K-PKP}
\end{equation}
As eigenvalues given by the min-max principle are sorted in non-increasing
order, we deduce that
\begin{equation*}
N_{\geq1}(K)-D  \leq |\{k\geq0 \mid\lambda_{D+k}(K)\geq1\}| 
  \leq |\{k\geq0 \mid\lambda_{k}(\Pi_{\mathcal{F}^{\perp}}K\Pi_{\mathcal{F}^{\perp}})\geq1\}| 
  = N_{\geq1}(\Pi_{\mathcal{F}^{\perp}}K\Pi_{\mathcal{F}^{\perp}}),
\end{equation*}
which yields \eqref{eq:bound-number-eigenvalues-greater-than-one-projection}.
\end{proof}

\section{Birman-Schwinger principle}
\label{app: Birman-S}
In this section, for the convenience of the reader, we recall a proof of the Birman-Schwinger principle for $H_s = (-\Delta)^s - v^2$ and $K_E := v \big( (-\Delta)^s - E \big )^{-1} v$, under the following assumptions:
\begin{assumption}\label{hyp:v-high-frequency} 
Let $d\geq 1$, $s\geq d/2$,  and $v$ measurable and real-valued, such that
\begin{itemize}
	\item either $v \in L^{2}$, if $s>d/2$,
	\item or $v \in \mathcal{D}( (\ln\mathbf{h})^{\frac12} (\ln\ln\mathbf{h})^{\frac12+\varepsilon} )\subset L^2$ for some $\varepsilon>0$ if $s=d/2$.
\end{itemize}
\end{assumption}
Here we denote by $\mathcal{D}(A)$ the domain of an operator $A$. We refer to e.g. \cite[Section 1.2.8]{FrankLaptevWeidl23} for a proof of the Birman-Schwinger principle in a general abstract setting.
\begin{remark}\label{rk:embeddings}
 Hypothesis~\ref{hyp:v-high-frequency} and $E<0$ ensure that the chain 
 \begin{align}\label{chain:s}
	L^2\xrightarrow{\ \ v\times\ \ } L^1\hookrightarrow (L^\infty)^* \hookrightarrow H^{-s} \xrightarrow{((-\Delta)^s-E)^{-1}} H^s\hookrightarrow L^\infty \xrightarrow{\ \ v\times\ \ } L^2,
\end{align}
holds for $s>d/2$. For $s=d/2$, we observe that, for all $p>2$, $\frac1p+\frac{1}{p'}=1$,
\begin{equation*}
(|\xi|^d-E)^{-1}=(|\xi|^d-E)^{-\frac{1}{p}} (|\xi|^d-E)^{-\frac{1}{p'}},
\end{equation*}
with 
\begin{equation*}
\|(|\xi|^d-E)^{-\frac{1}{p}}\|^*_{L^{p,\infty}}\lesssim_{\,d}1 \quad\text{ and } \quad\|(|\xi|^d-E)^{-\frac{1}{p'}}\|^*_{\ell^{p',\infty}(L^2)}\lesssim_{\,d} 1,
\end{equation*}
 uniformly in $p\in(2,2+\delta]$ for any $\delta>0$ (this follows from a similar calculation as in \eqref{eq:comput_gp}--\eqref{eq:comput_gp'}). Theorem \ref{th: norm S^2,infty} then shows that $((-\Delta)^{d/2}-E)^{-\frac12}v(x)$ belongs to $\mathcal{L}^{2,\infty}$ and hence is bounded. Its adjoint is then also bounded. This shows that the operator of multiplication by $v$ is bounded from $H^{d/2}$ to $L^2$ and from $L^2$ to $H^{-d/2}$. Therefore the chain
\begin{align}\label{chain:d/2}
	L^2\xrightarrow{\ \ v\times\ \ } H^{-d/2} \xrightarrow{((-\Delta)^{d/2}-E)^{-1}} H^{d/2} \xrightarrow{\ \ v\times\ \ }L^2,
\end{align}
holds. In particular $K_{E}$ is a bounded operator on $L^2$.

Moreover $K_E$ is also compact. For $s>d/2$ it is Hilbert-Schmidt, since its integral kernel is given by the $L^2$ function
\begin{equation*}
	-v(x) v(y) \int e^{-i(x-y)\xi} \frac{1}{|\xi|^{2s}- E} \mathrm{d}\xi\, . 
\end{equation*}
For $s=d/2$, this follows again from Theorem \ref{th: norm S^2,infty}. 

Note that this also implies that $H_s$ is self-adjoint by the KLMN theorem \cite[Theorem X.17]{ReedSimon2} and that the essential spectrum of $H_s$ is equal to $[0,\infty)$ thanks to Weyl's essential spectrum theorem \cite[Theorem XIII.14]{ReedSimon4} (see also \cite[Section XIII.4, Example 7]{ReedSimon4}).
\end{remark}

Recall that $N_{\le r}(A)$ (respectively $N_{\geq r}(A)$) denotes the number of eigenvalues less or equal (respectively larger or equal) than $r$ of a self-adjoint operator $A$, counted with multiplicity.
\begin{proposition}[Birman-Schwinger principle]\label{prop:birm-sch}
Assume  $E<0$ and Hypothesis~\ref{hyp:v-high-frequency} holds. Then
\begin{equation}\label{eq:bound-Birman-Schwinger}
	N_{\le E}(H_s) = N_{\ge 1}(K_E)\,.
\end{equation}
\end{proposition}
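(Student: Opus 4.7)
The plan is to prove the equality $N_{\le E}(H_s) = N_{\ge 1}(K_E)$ by matching the min-max characterizations of both sides through a symmetric factorization of $K_E$. Setting $T_E := ((-\Delta)^s - E)^{1/2}$, which is positive, self-adjoint, and invertible since $E<0$, I would write $K_E = AA^*$ with $A := v\, T_E^{-1}$. Under Hypothesis \ref{hyp:v-high-frequency}, the chains \eqref{chain:s}--\eqref{chain:d/2} of Remark \ref{rk:embeddings} show that $v$ maps $H^s$ into $L^2$ continuously, so $A$ is bounded on $L^2$ and this factorization is legitimate. Since $AA^*$ and $A^*A$ share the same non-zero spectrum with multiplicities, it suffices to establish $N_{\le E}(H_s) = N_{\ge 1}(A^*A)$.

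Because the essential spectrum of $H_s$ equals $[0,\infty)$ (Remark \ref{rk:embeddings}) and $E<0$, every spectral value $\le E$ is a discrete eigenvalue below the essential spectrum. Applying the Courant-Fischer form of the min-max principle (Theorem \ref{thm:min-max-principle} in its ``max-min'' dual), one obtains the two characterizations
\begin{align*}
N_{\le E}(H_s) &= \max\bigl\{ \dim V \, \mid\, V \subset H^s,\ \|T_E\psi\|^2 \le \|v\psi\|^2 \text{ for all } \psi\in V\bigr\},\\
N_{\ge 1}(A^*A) &= \max\bigl\{ \dim W \,\mid\, W \subset L^2,\ \|\eta\|^2 \le \|A\eta\|^2 \text{ for all } \eta \in W\bigr\}.
\end{align*}

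The change of variables $\eta = T_E\psi$, equivalently $\psi = T_E^{-1}\eta$, is a dimension-preserving bijection between subspaces $V \subset H^s$ and $W = T_E V \subset L^2$, since $T_E^{-1}\colon L^2 \to H^s$ is a bounded bijection. Under this substitution $\|T_E\psi\|^2 = \|\eta\|^2$ and $v\psi = v T_E^{-1}\eta = A\eta$, hence $\|v\psi\|^2 = \|A\eta\|^2$. The two quadratic inequalities above are therefore identical after the change of variables, so the two maxima coincide, yielding the Birman-Schwinger identity.

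The main obstacle is justifying the boundedness of $A = v T_E^{-1}$ in the critical regime $s=d/2$: there $v T_E^{-1}$ is no longer Hilbert-Schmidt, and the control of its operator norm depends on the Cwikel-type estimate of Theorem \ref{th: norm S^2,infty} together with the logarithmic domain condition on $v$ in Hypothesis \ref{hyp:v-high-frequency}. In the super-critical range $s>d/2$ this step is immediate, as $vT_E^{-1}$ is already Hilbert-Schmidt. A secondary verification is that the subspaces $V$ and $W$ land in the appropriate form-domains so that the quadratic forms $\psi \mapsto \langle \psi,(H_s-E)\psi\rangle$ and $\eta\mapsto\langle \eta,A^*A\eta\rangle$ are well defined; this is ensured by the mapping properties recorded in Remark \ref{rk:embeddings}.
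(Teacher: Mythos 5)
Your proof is correct, but it takes a genuinely different route from the paper's. The paper establishes, for each fixed $E'<0$, a bijection between the eigenvectors of $H_s$ at eigenvalue $E'$ and the eigenvectors of $K_{E'}$ at eigenvalue $1$, and then deduces the counting identity from the auxiliary Lemma~\ref{lemma: eval KE}, which shows that each curve $E\mapsto\lambda_j(K_E)$ is non-decreasing, continuous, and tends to $0$ as $E\to-\infty$, so that it can cross the level $1$ at most once. You instead bypass the eigenvalue curves entirely: by writing $K_E=AA^*$ with $A=vT_E^{-1}$, passing to $A^*A$, and observing that the substitution $\eta=T_E\psi$ is a dimension-preserving bijection between finite-dimensional subspaces of $H^s$ and of $L^2$ that transports the form inequality $\langle\psi,(H_s-E)\psi\rangle\le0$ into $\langle\eta,(A^*A-1)\eta\rangle\ge0$, you match the two Glazman-type characterizations directly and obtain the identity in one step. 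This is cleaner and avoids the resolvent-identity and $E\to-\infty$ analysis in Lemma~\ref{lemma: eval KE}; what it costs is that you need the Courant--Fischer form of the min--max principle for a non-compact, semi-bounded operator with essential spectrum above $E$, whereas the only min--max result stated in the paper (Theorem~\ref{thm:min-max-principle}) is for compact non-negative operators. That generalization is standard (e.g.\ \cite[Theorem XIII.1]{ReedSimon4}), but your citation of Theorem~\ref{thm:min-max-principle} ``in its max--min dual'' does not literally cover the $H_s$ side, so you would need to invoke the general Courant--Fischer statement explicitly. The paper's curve-based proof is, on the other hand, more informative about the structure of the problem (explicit eigenfunction correspondence, and monotonicity/continuity information on the $\lambda_j(K_E)$ that is sometimes useful elsewhere), but it is longer and leans on the Cwikel estimate a second time to control $\|K_E\|_{\mathcal{L}^\infty}$ as $E\to-\infty$ in the critical case, which your argument does not need.
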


The non-increasing sequence of eigenvalues $(\lambda_j(K))_{j\ge0}$ is rigorously defined in the statement of Theorem \ref{thm:min-max-principle}. We prove Proposition~\ref{prop:birm-sch} following the arguments of \cite{lieb_seiringer}, using properties of the maps $E\mapsto\lambda_j(K_E)$ that we collect in the following lemma.

\begin{lemma}\label{lemma: eval KE}
Assume Hypothesis~\ref{hyp:v-high-frequency} holds. For any $j\geq0$, the map $E\mapsto \lambda_j (K_E)$ is non-decreasing, continuous on~$(-\infty,0)$ and goes to 0 as $E\to-\infty$. 
\end{lemma}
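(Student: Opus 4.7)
The plan is to establish the three properties in turn, relying on elementary operator-theoretic facts combined with the boundedness and compactness of $K_E$ recorded in Remark \ref{rk:embeddings}.

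Monotonicity is immediate. The scalar function $\lambda\mapsto (\lambda-E)^{-1}$ is operator monotone decreasing in $-E$ on $[0,\infty)$, hence the resolvents $R_E:=((-\Delta)^s-E)^{-1}$ satisfy $R_{E_1}\leq R_{E_2}$ in the sense of quadratic forms whenever $E_1\leq E_2<0$. Sandwiching by $v(x)$ yields $K_{E_1}\leq K_{E_2}$, and the min-max principle of Theorem~\ref{thm:min-max-principle} gives $\lambda_j(K_{E_1})\leq \lambda_j(K_{E_2})$.

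For continuity on $(-\infty,0)$, I would use the resolvent identity $K_E-K_{E'}=(E-E')\,v R_E R_{E'}v$ and the factorization $vR_E R_{E'}v=(vR_E^{1/2})(R_E^{1/2}R_{E'}^{1/2})(R_{E'}^{1/2}v)$. Observing that $\|vR_E^{1/2}\|^2=\|R_E^{1/2}v v R_E^{1/2}\|=\|K_E\|$ (and likewise for $E'$) and that $\|R_E^{1/2}R_{E'}^{1/2}\|\leq\sup_{\xi}(|\xi|^{2s}-E)^{-1/2}(|\xi|^{2s}-E')^{-1/2}\leq|EE'|^{-1/2}$ by functional calculus, one obtains
\[\|K_E-K_{E'}\|\leq |E-E'|\,\frac{\sqrt{\|K_E\|\,\|K_{E'}\|}}{\sqrt{|E|\,|E'|}}.\]
Since $E\mapsto \|K_E\|$ is locally bounded, the right-hand side tends to zero as $E'\to E$, and another application of the min-max principle, yielding the Lipschitz bound $|\lambda_j(K_E)-\lambda_j(K_{E'})|\leq \|K_E-K_{E'}\|$, gives continuity.

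It remains to prove that $\lambda_j(K_E)\to 0$ as $E\to-\infty$; since $\lambda_j(K_E)\leq\lambda_0(K_E)=\|K_E\|$, it suffices to show that $\|K_E\|\to 0$. I would first establish strong convergence $K_E\to 0$ via the formula $\langle\varphi,K_E\varphi\rangle=\int_{\mathbb{R}^d}|\widehat{v\varphi}(\xi)|^2(|\xi|^{2s}-E)^{-1}\mathrm{d}\xi$, which tends to zero by dominated convergence (dominated by the integrand at some fixed $E_0<0$, which is integrable because $\langle\varphi,K_{E_0}\varphi\rangle<\infty$). To upgrade strong to operator-norm convergence, I would fix $\epsilon>0$, pick a finite-rank spectral projection $P$ of the compact operator $K_{E_0}$ with $\|K_{E_0}P^\perp\|<\epsilon/4$, and use the Cauchy-Schwarz-type bound $\langle\varphi,K_E\varphi\rangle\leq 2\langle P\varphi,K_E P\varphi\rangle+2\langle P^\perp\varphi,K_E P^\perp\varphi\rangle$. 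The second term is controlled uniformly in $E$ by $2\|K_{E_0}P^\perp\|<\epsilon/2$, while the first tends to zero uniformly on the unit sphere of the finite-dimensional subspace $PL^2$ by Dini's theorem (monotone pointwise convergence of continuous functions on a compact set is uniform). This yields $\limsup_{E\to-\infty}\|K_E\|\leq\epsilon$, and hence the desired conclusion.

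The main obstacle is the operator-norm decay in the critical case $s=d/2$: the quick trace-norm argument $\|K_E\|_{\mathcal{L}^1}\lesssim\|v\|_{L^2}^2\int(|\xi|^{2s}-E)^{-1}\mathrm{d}\xi$ handles the super-critical case $s>d/2$ by dominated convergence but fails at $s=d/2$ because the integral diverges, and a direct computation of the relevant $\mathcal{L}^{2,\infty}$-quasinorm in Theorem~\ref{th: norm S^2,infty} does not produce decay in $E$. The Dini-compactness argument circumvents this issue by only exploiting strong convergence of $K_E$ together with the compactness of a single dominating operator $K_{E_0}$, both of which are guaranteed by Remark~\ref{rk:embeddings}.
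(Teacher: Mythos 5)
Your proof is correct, and the first two parts — monotonicity via operator monotonicity of $x\mapsto 1/x$, continuity via the resolvent identity — follow the same route as the paper, with a minor stylistic difference: the paper derives the operator \emph{inequality} $K_E-K_{E'}\le\frac{E-E'}{-E'}K_E$ from $R_{E'}\le(-E')^{-1}$ and feeds it through the min-max principle, whereas you factorize the difference and obtain a two-sided operator-\emph{norm} bound followed by Weyl's Lipschitz estimate for eigenvalues. Both are fine.

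Where you genuinely diverge is the decay $\lambda_j(K_E)\to0$ as $E\to-\infty$. The paper argues quantitatively and splits into cases: for $s>d/2$ it bounds $\|K_E\|$ by $\|(|\xi|^{2s}-E)^{-1}\|_{L^1}\|v\|_{L^2}^2$ and applies dominated convergence; for the critical case $s=d/2$ it truncates $v$ in the Hermite basis as $v=v_{\varepsilon'}+(v-v_{\varepsilon'})$, controls the bounded piece by $(-E)^{-1/2}\|v_{\varepsilon'}\|_{L^\infty}$, and controls the tail uniformly in $E$ by re-invoking the Cwikel-type estimate of Theorem~\ref{th: norm S^2,infty}. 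Your argument is softer and handles both cases at once: you establish pointwise decay $\langle\varphi,K_E\varphi\rangle\to0$ by dominated convergence (dominated by the $E_0$-integrand, integrable since $K_{E_0}$ is bounded), and then upgrade to norm decay using only the compactness of $K_{E_0}$ — a finite-rank spectral projection $P$ to absorb all but an $\varepsilon$-tail, a Cauchy--Schwarz split of the quadratic form, and Dini's theorem (or simply norm equivalence on the finite-dimensional space $PL^2$) to make the finite-rank piece go to zero uniformly. What this buys you: no case distinction, no second application of the quantitative Cwikel estimate, and the argument would carry over verbatim to any setting in which the Birman--Schwinger operator at one fixed energy is known to be compact and the resolvents decrease to zero strongly. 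What you pay: you need compactness of $K_{E_0}$ as an input, which in the critical case the paper establishes via Theorem~\ref{th: norm S^2,infty} (Remark~\ref{rk:embeddings}), so the underlying dependence on that theorem does not actually disappear — it is just used qualitatively (for compactness) rather than quantitatively (for an $E$-uniform bound).
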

\begin{proof}
	As $x\mapsto 1/x$ is operator monotone, the expression of $\lambda_j(K_E)$ given in the min-max principle (Theorem \ref{thm:min-max-principle}) yields that $E\mapsto \lambda_j (K_E)$ is non-decreasing. 
	
	To prove continuity we use first the resolvent identity: let $E'<E<0$. Then
	\begin{align*}
		K_E - K_{E'} = (E-E') \, v \, \big( (-\Delta)^s - E \big )^{-1}\big( (-\Delta)^s - E' \big )^{-1} \, v \leq \frac{E-E' }{-E'}K_E,
	\end{align*}
	and hence, for all $u\in L^2$,
	\begin{align*}
		\langle K_E\, u, u\rangle
		\leq 	\langle K_{E'}\, u, u\rangle  + \frac{E-E' }{-E'}	\|K_E\|_{\mathcal{L}^\infty} \|u\|_{L^2}^2.
	\end{align*}
 The min-max principle (Theorem \ref{thm:min-max-principle}) and the previous inequality then yield
	\begin{align*}
		\lambda_j (K_E ) \leq \max_{\substack{u \in \mathcal{S}^\perp\\ \|u\|_{L^2 }=1}} 	\langle K_E \, u, u\rangle  \leq \max_{\substack{u \in \mathcal{S}^\perp\\  \|u\|_{L^2 }=1}}	\langle K_{E'} \, u, u\rangle + \frac{E-E' }{-E'}	\|K_E\|_{\mathcal{L}^\infty}, 
	\end{align*}
	for all subspace $\mathcal{S}$ of $L^2$ of dimension $j$. Hence, taking the minimum over all such spaces and using again the min-max principle, we obtain
	\begin{align*}
		\lambda_j (K_E ) \leq  \lambda_j (K_{E'} ) + \frac{E-E'}{-E'}	\|K_E\|_{\mathcal{L}^\infty} .
	\end{align*}
	Together with  $\lambda_j (K_{E'} )\leq\lambda_j (K_E )$, this gives the continuity with respect to $E$ of $\lambda_j (K_E)$. 
	
To prove that $\lambda_j(K_E)\to0$ as $E\to-\infty$, since $\lambda_j(K_E)\le\|K_E\|_{\mathcal{L}^\infty}$, it suffices to show that $\|K_E\|_{\mathcal{L}^\infty}=\|[(-\Delta)^s-E]^{-1/2}v(x)\|^2_{\mathcal{L}^\infty}\to0$.

Suppose that $s=d/2$. Recall that $v\in\mathcal{D}((\ln\mathbf{h})^{\frac12} (\ln\ln\mathbf{h})^{\frac12+\varepsilon})$ by assumption. Let $\varepsilon'>0$ and let $R_{\varepsilon'}>0$ be such that
\begin{equation}\label{eq:appA1}
\big\|\mathds{1}_{\mathbf{h}\ge R_{\varepsilon'}} (\ln\mathbf{h})^{\frac12} (\ln\ln\mathbf{h})^{\frac12+\varepsilon} v\big\|_{L^2}\le\varepsilon'.
\end{equation}
Setting $v_{\varepsilon'}:=\mathds{1}_{\mathbf{h}< R_{\varepsilon'}}  v$, we have $v_{\varepsilon'}\in L^\infty$ (as $v_{\varepsilon'}$ is a finite linear combination of bound states of $\mathbf{h}$) and therefore we can write 
\begin{align}
\big\|[(-\Delta)^s-E]^{-\frac12}v(x)\big\|_{\mathcal{L}^\infty}&\le\big\|[(-\Delta)^s-E]^{-\frac12}v_{\varepsilon'}(x)\big\|_{\mathcal{L}^\infty}+\big\|[(-\Delta)^s-E]^{-\frac12}(v(x)-v_{\varepsilon'}(x))\big\|_{\mathcal{L}^\infty} \notag\\
&\le(-E)^{-\frac12}\|v_{\varepsilon'}\|_{L^\infty}+C \big\| (\ln\mathbf{h})^{\frac12} (\ln\ln\mathbf{h})^{\frac12+\varepsilon}(v-v_{\varepsilon'})\big\|_{L^2}, \label{eq:appA2}
\end{align}
for some $C>0$, uniformly in $E\le-1$. In the second inequality, we used that $\|A\|_{\mathcal{L}^\infty}\lesssim\|A\|_{\mathcal{L}^{2,\infty}}$, for any operator $A\in\mathcal{L}^{2,\infty}$, together with Theorem \ref{th: norm S^2,infty} (applied with $g(\xi)=[|\xi|^{2s}-E]^{-1/2}$, so that $g^2$ can be decomposed as $g^2(\xi)=g_p(\xi)g_{p'}(\xi)$ with $p>2$, $\frac{1}{p}+\frac{1}{p'}=1$ and $g_p(\xi)=[|\xi|^{2s}-E]^{-1/p}$; a similar calculation as in \eqref{eq:comput_gp}--\eqref{eq:comput_gp'} then shows that $\|g_p\|_{L^{p,\infty}}$, $\|g_{p'}\|_{\ell^{p',\infty}(L^2)}$ are uniformly bounded in $p\in(2,2+\delta]$ and $E\le-1$ for any $\delta>0$). Combining \eqref{eq:appA1} and \eqref{eq:appA2} shows that $\|[(-\Delta)^s-E]^{-1/2}v(x)\|_{\mathcal{L}^\infty}\to 0$ as $E\to-\infty$.

In the case where $s>d/2$, it suffices to write
	\begin{multline*}
	\|K_E\|_{\mathcal{L}^\infty}=\sup_{\|u\|_{L^2}=1}\langle u,K_Eu\rangle
	=\sup_{\|u\|_{L^2}=1}\int_{\mathbb{R}^d}\big(|\xi|^{2s}-E\big)^{-1}|\widehat{vu}(\xi)|^2\mathrm{d}\xi\\
	\le\big\|\big(|\xi|^{2s}-E\big)^{-1}\big\|_{L^1} \sup_{\|u\|_{L^2}=1}\big\| |\widehat{vu}|^2\big\|_{L^{\infty}}.
	\end{multline*}
Now we have 
	\begin{equation*}
	\big\| |\widehat{vu}|^2\big\|_{L^{\infty}}=\big\|\widehat{vu}\big\|^2_{L^{\infty}}\lesssim_{\,d} \|u\|_{L^2}^2\|v\|_{L^2}^2,
	\end{equation*}
	and the dominated convergence theorem shows that $\|(|\xi|^{2s}-E)^{-1}\|_{L^1}\to0$ as $E\to-\infty$. This concludes the proof.
\end{proof}

Now we are ready to prove Proposition \ref{prop:birm-sch}.

\begin{proof}[Proof of Proposition \ref{prop:birm-sch}]
Any eigenfunction $\psi$ of $H_s$ associated to an eigenvalue $E'<0$ is in particular in the domain of~$H_s$ (hence in $H^s$, the form domain of~$H_s$), and satisfies
\begin{equation*}
	((-\Delta)^s - E') \psi = v^2\,\psi.
\end{equation*}
We set $\phi  = v\psi\in H^{-s}$ (see Remark \ref{rk:embeddings}).
The resolvent  $((-\Delta)^s - E')^{-1}$ applied to the equality above yields
\begin{equation*}
	   \psi = \big( (-\Delta)^s - E' \big )^{-1} v \phi\in H^s\,,
\end{equation*}
which in turn implies that $\phi\neq 0$. Multiplying by $v$ then gives
\begin{equation*}
	  \phi = v \big( (-\Delta)^s - E' \big )^{-1} v \phi\in L^2,
\end{equation*}
so that $\phi$ is an eigenvector of $K_{E'}$ corresponding to the eigenvalue $1$.

Viceversa, for any eigenfunction $\phi\in L^2$ of $K_{E'}$ associated to the eigenvalue $1$,
we set $\psi = ((-\Delta)^s - E')^{-1}v \phi\in H^s\subset L^2$. Multiplying by $v$ yields $v\psi = \phi\neq 0 $, so that $\psi\neq0$ and
\begin{equation*}
	((-\Delta)^s - E') \psi = v \phi = v^2  ((-\Delta)^s -E')^{-1}v \phi = v                                                                                                                                                                                                                                  ^2 \psi\,.
\end{equation*}
It follows that $\psi$ is an eigenvector of $H_s$ associated to the eigenvalue $E'$. 

We have thus, for any $E'<0$, a bijection between the eigenfunctions $\phi$ of $K_{E'}$ corresponding to the eigenvalue $1$, and the eigenfunctions $\psi $ of $H_s$ corresponding to $E'$.
Hence
\begin{align}
N_{\leq E}(H_s) & =\sum_{E'\leq E}\dim\ker(H_s-E')
=\sum_{E'\leq E}|\{j\mid\lambda_{j}(K_{E'})=1\}|\,.\label{eq:N-H-leq-E-as-a-sum}
\end{align}
Now, for every $j$, the map $E\mapsto\lambda_{j}(K_{E})$ takes at
most once the value~$1$, because otherwise the set of eigenvalues of $H_s$ would contain an interval $[E_1,E_2]\subset (-\infty, 0)$, which is impossible. 
It follows that
\begin{equation}
\sum_{E'\leq E}|\{j\mid\lambda_{j}(K_{E'})=1\}|=|\{j\mid\exists E'\leq E,\lambda_{j}(K_{E'})=1\}|\,.\label{eq:lambda-j-take-only-once-the-value-one}
\end{equation}
As, for any $j$, $E'\mapsto\lambda_{j}(K_{E'})$ is continuous and $\lambda_{j}(K_{E'})\to0$ as $E'\to-\infty$, we deduce that
\begin{equation}
|\{j\mid\exists E'\leq E,\lambda_{j}(K_{E'})=1\}|=|\{j\mid\lambda_{j}(K_{E})\geq1\}|=N_{\geq1}(K_{E})\,.\label{eq:lambda-j-go-to-0-continuously-at-minus-infinity}
\end{equation}
The bound \eqref{eq:bound-Birman-Schwinger} then follows from~(\ref{eq:N-H-leq-E-as-a-sum}), (\ref{eq:lambda-j-take-only-once-the-value-one})
and (\ref{eq:lambda-j-go-to-0-continuously-at-minus-infinity}).
\end{proof}

\section{Proofs of Theorem \ref{thm:simon} and Lemma \ref{lm:embedding}}\label{app:p'}
In this section we prove Theorem \ref{thm:simon} and Lemma \ref{lm:embedding} which were used in the proof of Theorem~\ref{th: norm S^2,infty}. To obtain Theorem \ref{thm:simon}, we reproduce the proof of \cite[Theorem 4.6]{Simon05}, carefully following the dependence on the parameter $p'$ in all the estimates.


\begin{proof}[Proof of Theorem \ref{thm:simon}]
		
	Assume that $\|f\|_{\ell^{p'}(L^2)}= \|g\|_{\ell^{p',\infty}(L^2)}^*= 1$. Recall that $\chi_\mathbf{m}$ stands for the characteristic function of the unit hypercube of $\R^d$ with center $\mathbf{m}\in\mathbb{Z}^d$ and, for all function $f:\mathbb{R}^d\to\mathbb{C}$, $f_\mathbf{m}:=\chi_\mathbf{m}f$. We set $\tilde{f}_{\mathbf{m}} := \frac{f_{\mathbf{m}}}{\|f_{\mathbf{m}}\|_{L^2}}, \tilde{g}_{\mathbf{m}} := \frac{g_{\mathbf{m}}}{\|g_{\mathbf{m}}\|_{L^2}}$ and write 
	\begin{equation*}
		f = \sum_{\mathbf{m}\in\Z^d} a_{\mathbf{m}}\tilde{f}_{\mathbf{m}},\quad  a_{\mathbf{m}}:= \|f_{\mathbf{m}}\|_{L^2},\quad	g = \sum_{\mathbf{m}\in\Z^d} b_{\mathbf{m}}\tilde{g}_{\mathbf{m}},\quad  b_{\mathbf{m}}:= \|g_{\mathbf{m}}\|_{L^2},
\end{equation*}
so that $ \|a_{\mathbf{m}}\|_{\ell^{p'}} = \|b_{\mathbf{m}}\|_{\ell^{p', \infty}}^*=1$. As in \cite[Theorem 4.6]{Simon05}, for any $n \in \Z$, we define 
\begin{eqnarray*}
	f_n& :=& \sum_{2^{n-1}<a_{\mathbf{m}}\leq 2^n} a_{\mathbf{m}}\tilde{f}_{\mathbf{m}}, \qquad g_n := \sum_{2^{n-1}<b_{\mathbf{m}}\leq 2^n} b_{\mathbf{m}}\tilde{g}_{\mathbf{m}}\\
	A_n&:= & \sum_{l+k\leq n} f_{l}(x) g_k(-i\nabla), \qquad 
	B_n:=  \sum_{l+k> n} f_{l}(x) g_k(-i\nabla),
\end{eqnarray*} 
so that $f(x) g(-i\nabla) = A_n + B_n$. Then using Fan's inequality \cite[Theorem 1.7]{Simon05}:
\begin{equation}
	\label{eq: mu_m uneven}
	\mu_{m}(f(x) g(-i\nabla)) \leq \mu_{m/2 +1/2}(A_n) + \mu_{m/2 +1/2}(B_n), \quad m \textnormal{ odd},
\end{equation}
and 
\begin{equation}
	\label{eq: mu_m even}
	\mu_{m}(f(x) g(-i\nabla)) \leq \mu_{m/2 +1}(A_n) + \mu_{m/2 }(B_n) \leq \mu_{m/2 }(A_n) + \mu_{m/2 }(B_n),\quad m \textnormal{ even}.
\end{equation}
By estimating the norms $\|A_n\|_{\mathcal{L}^{2}}$ and $\|B_n\|_{\mathcal{L}^{1}}$ we obtain bounds on the singular values of $A_n$ and $B_n$ which will allow us to conclude. 
Since, $f_l$ and $g_k$ have disjoint supports, we first obtain that 
\begin{align*}
	\|A_n\|_{\mathcal{L}^{2}}^2 =\, &\mathrm{Tr} (A_n^*A_n) = \mathrm{Tr}\Big( \sum_{\substack{l+k\leq n\\l'+k'\leq n}} f_{l}(x) \overline{f_{l'}(x)}g_k(-i\nabla)\overline{g_{k'}(-i\nabla)}\,\Big)\\
	=&\, \mathrm{Tr}\Big( \sum_{l+k\leq n}\overline{g_k(-i\nabla)} |f_{l}(x)|^2 g_k(-i\nabla)\, \Big) .
\end{align*}
This expression can be computed thanks to the formula $\|f(x)g(-i\nabla)\|_{\mathcal{L}^2} = (2\pi)^{-d/2} \|f\|_{L^2} \|g\|_{L^2}$:
\begin{equation*}
	\|A_n\|_{\mathcal{L}^{2}}^2 
	 = \sum_{l+k\leq n}\|f_{l}(x) g_k(-i\nabla)\|_{\mathcal{L}^{2}}^2
	 =\,c_d
	 \sum_{l+k\leq n} \|f_l\|_{L^2}^2\|g_k\|_{L^2}^2
	 = \,c_d
	 \sum_{\substack{l+k\leq n\\ 2^{l-1}<a_{\mathbf{m}}\leq 2^l\\ 2^{k-1}<b_{\mathbf{p}}\leq 2^k}} a_{\mathbf{m}}^2b_{\mathbf{p}}^2\,.
 \end{equation*}
The number of $b_{\mathbf{p}}$ in the interval $(2^{k-1}, 2^{k}]$ is bounded by
\begin{align}
	\label{eq: number b_p}
	|\{\mathbf{p}: b_{\mathbf{p}} \geq 2^{k-1}\} | \leq 2^{-p'(k-1)} \|b_{\mathbf{p}}\|_{\ell^{p', \infty}}^* \leq 2^2 2^{-kp'}.
\end{align}
Using this in the norm of $A_n$ gives
\begin{multline*}
		\|A_n\|_{\mathcal{L}^{2}}^2 
		\lesssim_{\,d}   \sum_{\substack{l+k\leq n\\ 2^{l-1}<a_{\mathbf{m}}\leq 2^l}} a_{\mathbf{m}}^2 2^{2k}\sum_{2^{k-1}<b_{\mathbf{p}}\leq 2^k} 1\\
		\lesssim_{\,d}
	 \sum_{\substack{l \in \Z\\ 2^{l-1}<a_{\mathbf{m}}\leq 2^l}} a_{\mathbf{m}}^2 \sum_{k \leq n-l} 2^{2k-kp'}
		= \frac{2^{(2-p')n}}{1-2^{p'-2}}\sum_{\substack{l \in \Z\\ 2^{l-1}<a_{\mathbf{m}}\leq 2^l}} 2^{-(2-p')l}a_{\mathbf{m}}^2
		\leq  \frac{2^{(2-p')n}}{1-2^{p'-2}}\ , 
\end{multline*}
where in the last inequality we have used the bound
\begin{align*}
	\sum_{\substack{l \in \Z\\ 2^{l-1}<a_{\mathbf{m}}\leq 2^l}} 2^{-(2-p')l}a_{\mathbf{m}}^2 = & 	\sum_{\substack{l \in \Z\\ 2^{l-1}<a_{\mathbf{m}}\leq 2^l}} 2^{-(2-p')l}a_{\mathbf{m}}^{2-p'}a_{\mathbf{m}}^{p'} \leq \sum_{\mathbf{m}} a_{\mathbf{m}}^{p'} =1.
\end{align*}
By \cite[Theorem 4.5]{Simon05} we also have 
\begin{equation*}
	\|B_n\|_{\mathcal{L}^{1}}  \lesssim \sum_{\substack{l+k> n\\ 2^{l-1}<a_{\mathbf{m}}\leq 2^l\\ 2^{k-1}<b_{\mathbf{p}}\leq 2^k}} a_{\mathbf{m}}b_{\mathbf{p}}.
\end{equation*}
 Using again \eqref{eq: number b_p} we have 
\begin{align*}
	\|B_n\|_{\mathcal{L}^{1}} \lesssim & \sum_{\substack{l+k> n\\ 2^{l-1}<a_{\mathbf{m}}\leq 2^l}}a_{\mathbf{m}} 2^{k-kp'} = \sum_{\substack{l \in \Z\\ 2^{l-1}<a_{\mathbf{m}}\leq 2^l}}a_{\mathbf{m}} \sum_{k\geq n-l+1} 2^{(1-p')k}\\
	=& \sum_{\substack{l \in \Z\\ 2^{l-1}<a_{\mathbf{m}}\leq 2^l}}a_{\mathbf{m}} \frac{2^{(1-p')(n-l+1)}}{1-2^{1-p'}}\leq  \frac{2^{(1-p')n}}{1-2^{1-p'}},
\end{align*}
where we have used the following inequality
\begin{align*}
\sum_{\substack{l \in \Z\\ 2^{l-1}<a_{\mathbf{m}}\leq 2^l}}a_{\mathbf{m}} 2^{(1-p')(1-l)}=\sum_{\substack{l \in \Z\\ 2^{l-1}<a_{\mathbf{m}}\leq 2^l}}a_{\mathbf{m}}^{p'}	 a_{\mathbf{m}}^{1-p'} 2^{-(1-p')(l-1)} \leq \sum_{\mathbf{m}} a_{\mathbf{m}}^{p'} =1.
\end{align*}
Going back to \eqref{eq: mu_m uneven} and \eqref{eq: mu_m even}, it suffices to consider $m$ even. By the definition of the norms on the trace ideals $\mathcal{L}^{1}, \mathcal{L}^{2}$ and since the singular values are arranged in decreasing order, we have 
\begin{align*}
	\|B_n\|_{\mathcal{L}^{1}} \geq \sum_{k=1}^{m/2} \mu_k (B_n )\geq \frac{m}{2} \mu_{m/2} (B_n ),
\end{align*}
which implies 
\begin{align*}
	\mu_{m/2} (B_n ) \lesssim \frac{2}{m}\ \frac{2^{(1-p')n}}{1-2^{1-p'}} \lesssim \frac{2^{(1-p')n}}{m}.
\end{align*}
Analogously 
\begin{align*}
	\mu_{m/2} (A_n ) \lesssim \sqrt{\frac{1}{m}}\  \frac{2^{(1-p'/2)n}}{\sqrt{1-2^{p'-2}}},
\end{align*}
and hence 
\begin{align}
	\label{eq: ineq mu_m}
	\mu_m (f(x)g (-i\nabla)) \lesssim m^{-1} 2^{(1-p')n} + m^{-\frac12} \frac{2^{(1-p'/2)n}}{\sqrt{1-2^{p'-2}}}.
\end{align}
Optimizing with respect to $n$ yields
\begin{align}
	\label{eq: ineq mu_mbis}
	\mu_m (f(x)g (-i\nabla)) \lesssim m^{-\frac1{p'}} (1-2^{p'-2})^{\frac{1}{p'}-1}\  \lesssim m^{-\frac1{p'}} (2-p')^{\frac{1}{p'}-1},
\end{align}
%
%
%
which proves the statement of the theorem.
\end{proof}

We conclude with the proof of Lemma \ref{lm:embedding}.


\begin{proof}[Proof of Lemma \ref{lm:embedding}]
	Let $q$ be defined by $\frac{1}{q} + \frac{1}{2} = \frac{1}{p'}$. We have
	\begin{align*}
		\|f\|_{\ell^{p'}(L^2)} = & \|\,\|\chi_{\mathbf{m}}f\|_{L^2} \|_{\ell^{p'}} \lesssim \| \langle \mathbf{m}\rangle ^{-r} \|\chi_{\mathbf{m}}f\|_{L^2(\langle x\rangle^{2r}\mathrm{d}x)} \|_{\ell^{p'}} \lesssim \|\langle \mathbf{m}\rangle ^{-r}\|_{\ell^q}\|f\|_{L^2(\langle x\rangle^{2r}\mathrm{d}x)},
	\end{align*}
where we choose $r$ such that $rq>d$, so that $\langle\mathbf{m}\rangle^{-r}$ indeed belongs to $\ell^q(\Z^d)$. By straightforward computations one obtains 
\begin{align*}
	\|f\|_{\ell^{p'}(L^2)} \lesssim_{\,d} &\, \frac{(rq-d + 1)^{1/q}}{(rq-d)^{1/q}} \|f\|_{L^2(\langle x\rangle^{2r}\mathrm{d}x)}.
\end{align*}
It then suffices to observe that, given $0<\delta<1$, the constant appearing in the right hand side of the previous inequality is uniformly bounded in $p'\in[2-\delta,2)$.
\end{proof}

\bibliographystyle{plain}
\bibliography{biblioCLR}

\end{document}